\newtheorem{thm}{Theorem}[section]
\newtheorem{cor}[thm]{Corollary}
\newtheorem{lem}[thm]{Lemma}
\newtheorem{prop}[thm]{Proposition}
\numberwithin{equation}{section}
\theoremstyle{definition}
\newtheorem{definition}[thm]{Definition}
\newtheorem{rem}[thm]{Remark}
\begin{document}
 \title[Sobolev inequality on noncommutative Euclidean spaces]{Sobolev inequality and its applications to nonlinear PDE on noncommutative Euclidean spaces}

\author[Michael Ruzhansky]{Michael Ruzhansky}
\address{
 Michael Ruzhansky:
  \endgraf
 Department of Mathematics: Analysis, Logic and Discrete Mathematics,
  \endgraf
 Ghent University, Ghent,
 \endgraf
  Belgium 
  \endgraf
  and 
 \endgraf
 School of Mathematical Sciences, Queen Mary University of London, London,
 \endgraf
 UK  
 \endgraf
  {\it E-mail address} {\rm michael.ruzhansky@ugent.be}
  }

\author[Serikbol Shaimardan]{Serikbol Shaimardan}
\address{
  Serikbol Shaimardan:
  \endgraf
  Institute of Mathematics and Mathematical Modeling, 050010, Almaty, 
  \endgraf
  Kazakhstan 
  \endgraf
  and 
  \endgraf
Department of Mathematics: Analysis, Logic and Discrete Mathematics
  \endgraf
 Ghent University, Ghent,
 \endgraf
  Belgium
  \endgraf
  {\it E-mail address} {\rm shaimardan.serik@gmail.com} 
  }

 \author[Kanat Tulenov]{Kanat Tulenov}
\address{
  Kanat Tulenov:
  \endgraf
  Institute of Mathematics and Mathematical Modeling, 050010, Almaty, 
  \endgraf
  Kazakhstan 
  \endgraf
  and 
  \endgraf
Department of Mathematics: Analysis, Logic and Discrete Mathematics
  \endgraf
 Ghent University, Ghent,
 \endgraf
  Belgium
  \endgraf
  {\it E-mail address} {\rm kanat.tulenov@ugent.be} 
  }

\date{}

\begin{abstract}
In this work, we study the Sobolev inequality on noncommutative Euclidean spaces. As a simple consequence, we obtain the Gagliardo–Nirenberg type inequality and as its application we show global well-posedness of nonlinear PDEs in the noncommutative Euclidean space. Moreover, we show that the logarithmic Sobolev inequality is equivalent to the Nash inequality for possibly different constants in this noncommutative setting by completing the list in noncommutative Varopoulos's theorem in \cite{Zhao}.  Finally, we present a direct application of the Nash inequality to compute the time decay for solutions of the heat equation in the noncommutative setting. 
\end{abstract}

\subjclass[2020]{46L51,  47L25,  46E35, 42B15,    35L76, 35L05}

\keywords{Noncommutative Euclidean space, Sobolev inequality, Nash inequality, Logarithmic Sobolev inequality, Gagliardo–Nirenberg inequality, nonlinear damped wave equation, global well-posedness}

\maketitle

\tableofcontents
{\section{Introduction}}

The classical Sobolev inequality \cite{Sobolev} in the Euclidean space $\mathbb{R}^d,$  $d>2,$ states that, for any sufficiently smooth function $f$ with compact support, the inequality 
\begin{equation}\label{main-sobolev-ineq}\|f\|_{2d/d-2}\leq C\|\nabla f\|_{2},
\end{equation}
  holds, where the constant $C>0$ depends only on $d>2.$ This inequality is widely applied in the study of partial differential equations (briefly PDEs) (e.g., \cite{GT}) and is interconnected with various other inequalities. The Sobolev inequality finds widespread applications in diverse areas of mathematics and physics. In the study of elliptic and parabolic partial differential equations, it serves as a fundamental tool for proving existence and uniqueness of solutions, as well as regularity properties of solutions. Moreover, in the context of variational problems and optimization, the Sobolev inequality plays a crucial role in establishing the compactness of minimizing sequences and the convergence of solutions. Extending this theory to noncommutative settings poses unique challenges and opportunities. In the noncommutative context, the traditional notion of differentiation must be reinterpreted, often relying on concepts from operator theory and functional analysis. In the framework of the non-commutative geometry, the notion of Euclidean space undergoes a profound transformation. Instead of traditional coordinates that commute with each other, we deal with noncommuting coordinates, reflecting the noncommutative nature of the space. This departure from commutativity introduces intriguing mathematical structures, reminiscent of quantum mechanics and operator algebras. Noncommutative Euclidean space $\mathbb{R}^d_{\theta},$ which is defined in terms of an arbitrary skew-symmetric real $d\times d$ matrix $\theta,$ represent deformations of Euclidean space $\mathbb{R}^d$
  as outlined by Rieffel \cite{Rieffel}. This family of spaces stands among the earliest and most thoroughly investigated examples in noncommutative geometry, owing to its significance in the quantum mechanical phase space perspective. Various scholars, including Moyal \cite{M} and Groenwald \cite{Gro}, have approached these spaces from diverse angles. The central concept involves deforming the algebra of smooth functions on $\mathbb{R}^d$ by substituting the standard pointwise product with the twisted Moyal product. In the realm of noncommutative geometry, noncommutative Euclidean spaces represent notable instances of ``noncompact"  spaces \cite{CGRS, GGSVM}. The Moyal product garners attention for its relevance to quantum phase space \cite[Chapter 13]{H}, \cite{green-book}. Furthermore, noncommutative Euclidean spaces play a crucial role in quantum mechanics, especially in situations where spatial coordinates fail to commute \cite{CGRS}. The operators defined on $\mathbb{R}^d_{\theta}$, referred to as quantum-classical Weyl operators, have been thoroughly investigated in \cite{BW, DW}. The concept of convolution on $\mathbb{R}^d_{\theta}$, under the assumption that $\theta$ is a real, invertible, skew-symmetric matrix, was first introduced in \cite{W}. That work also proved the associativity of the convolution and emphasized the role of the Fourier-Weyl transform, which is treated in the present context as the Fourier transform on $\mathbb{R}^d_{\theta}$. The more general case involving arbitrary $\theta$ is addressed in \cite{DW}.
In the literature, multiple equivalent constructions regarding noncommutative Euclidean space are documented. One approach to defining $\mathbb{R}^d_{\theta}$ starts by establishing the von Neumann algebra $L^{\infty}(\mathbb{R}^{d}_{\theta})$ as a twisted left-regular representation of $\mathbb{R}^d$ on $L^2(\mathbb{R}^d).$
We will delve into this definition of a von Neumann algebra, which is generated by a $d$-parameter strongly continuous unitary family $\{U_{\theta}(t)\}_{t\in\mathbb{R}^d}$ satisfying the relation
$$
U_{\theta}(t)U_{\theta}(s)=e^{\frac{1}{2}i(t,\theta s)}U_{\theta}(t+s),\quad t,s\in \mathbb{R}^d,
$$
where $(\cdot,\cdot)$ is inner product in $\mathbb{R}^d.$

In recent times, there has been a significant body of research aimed at extending the techniques of the classical harmonic analysis on Euclidean spaces to noncommutative setting. This is because it is possible to define analogues of many tools of harmonic analysis, such as differential operators, Fourier transform, and function spaces. Recent progress in this theory even allow to study nonlinear PDEs \cite{Mc}. For more details and recent results on this theory, we refer the reader to \cite{GJM}, \cite{GGSVM}, \cite{MSX},  \cite{Mc}, \cite{HLW}, \cite{RST}, and references therein.

There are several proofs of the Sobolev inequality in $\mathbb{R}^d,$ $d>2.$ One of them is by using the heat kernel estimate \cite{VSC}. Moreover, N. Varopoulos obtained an equivalence between a heat kernel estimate and the Sobolev inequality in an abstract settings in 1985 \cite{Var}. He proved that the Sobolev inequality is equivalent to the heat kernel estimate as well as the Nash \cite{Nash}, and Moser \cite{Moser} inequalities, for possibly different constants. On the other hand, the Nash inequality is also equivalent to the logarithmic Sobolev inequality. The Sobolev inequality was proved in \cite[Theorem 3.11 and Corollary 3.19]{Zhao} by using the heat kernel estimate. Moreover, the author obtained the noncommutative analogue of the Varopoulos's theorem even a more general case which includes noncommutative Euclidean spaces. On the other hand, there is another way to prove the Sobolev inequality which hinges upon the Young inequality for weak type spaces and the Hardy-Littlwood-Sobolev inequality. {\color{red}These inequalities including the Sobolev inequality in the context of quantum phase space were explored recently by Lafleche in \cite{L}. The quantum phase space corresponds, up to unitary equivalence, to a special case of the noncommutative Euclidean space when $\theta$ is an invertible skew-symmetric matrix. A central distinction in our work lies in addressing the general situation where $\theta$ may not be invertible. In the invertible case, the $L^p$-spaces over $\mathbb{R}^d_{\theta}$ are isomorphic to Schatten classes, and there exist well-defined embeddings between $L^p$-spaces for varying values of $p$. In contrast, such inclusions fail in the general setting, introducing substantial analytical differences. This distinction is essential for the inequalities and results we establish for partial differential equations. }
Therefore, we first focused to obtain the Hardy-Littlwood-Sobolev and Young inequality for weak type spaces in the noncommutative Euclidean space. Our proof is based on the Young inequality in \cite{Mc} and the interpolation in the classical Lorentz spaces as well as a proper extension of the definition of convolutions in the class of distributions.  
Moreover, we prove that the equivalence between the Nash inequality and the logarithmic Sobolev inequality remains true even in the noncommutative Euclidean space by completing the list of the noncommutative Varopoulos's theorem in \cite[Theorem 4.30]{Zhao}. Note that the Logarithmic Sobolev inequalities were originally introduced by Gross \cite{Gross} as a reformulation of hypercontractivity, and have since been the subject of extensive research. As a consequence of the noncommutative Sobolev and  H\"{o}lder inequalities we obtain a version of the Gagliardo–Nirenberg  inequality which allows us to show global well-posedness of nonlinear damped wave equations for the sub-Laplacian in the noncommutative Euclidean space. Nonlinear PDEs in the noncommutative Euclidean space were studied very recently in \cite{Mc}, where the author obtained local (some global) well-posedness in time of Allen-Cahn, Schr\"{o}dinger and incompressible Navier-Stokes equations. Additionally, a recent contribution by the authors in \cite{RST2} investigates $L^p$-$L^q$ norm estimates for solutions of heat, wave, and Schr\"odinger type equations with Caputo fractional derivative and well-posedness of nonlinear heat and wave equations in this noncommutative framework. {\color{red}The main difference in this work is that we study damped wave equations for the sub-Laplacian and show its global in time well posedness.  In the commutative case when $\theta=0$, the study of linear and nonlinear damped wave equations has a rich historical background. The authors in \cite{von, Matsumura} were the first to consider this type of problem for the Laplacian on $\mathbb{R}^d$. Further developments can be found in \cite{HKN04, HKN06, HL92, HO04, Ike04,KU13, Kha13, RT19}, which addresses damped wave equations in  $\mathbb{R}^d$ under various assumptions. These works, along with the references therein, focus on the global solvability of Cauchy problems for nonlinear wave equations involving the Laplace operator and a dissipative term.  Moreover, our approach is completely different from that of \cite{Mc}. However, our approach is also applicable at least to obtain a global in time well posedness of the nonlinear Schr\"{o}dinger equation which was studied in \cite{Mc}. In general,  there are some technical difficulties in noncommutative setting including defining even nonlinear operator functions. The other difficulties in the noncommutative Euclidean spaces were explained in \cite{Mc}.
{\color{red} The central objective in this setting is to derive a Gagliardo-Nirenberg type inequality as a consequence of the Sobolev inequality and to employ an appropriate version of the Banach fixed point theorem to establish the desired results.} The idea comes from the paper \cite{RT}, where the authors studied similar problems in the context of Heisenberg and graded Lie groups.}
At the end, as in the classical case, we show a direct application of the Nash inequality to compute the time decay for solutions of the heat equation in the noncommutative setting.


\section{Preliminaries}
 
 \subsection{Noncommutative (NC) Euclidean space $\mathbb{R}^{d}_{\theta}$} \label{NC Euclidean space}

For a thorough examination of the noncommutative (NC) Euclidean space $\mathbb{R}^{d}_{\theta}$ and additional insights, we recommend recent works \cite{GJM}, \cite{GGSVM}, \cite{HLW}, \cite{MSX}, \cite{Mc}, and \cite{RST}.

Let $H$ represents a Hilbert space, with $B(H)$ denoting the algebra consisting of all bounded linear operators that act on $H$. In the usual context, we denote by $L^p(\mathbb{R}^d)$ ($1\leq p<\infty$) the $L^p$-spaces of pointwise almost-everywhere equivalence classes of $p$-integrable functions, while $L^{\infty}(\mathbb{R}^d)$ denotes the space of essentially bounded functions on the Euclidean space $\mathbb{R}^d$.

For $1\leq p\leq\infty$, the Lorentz space $L^{p,\infty}(\mathbb{R}^d)$ refers to the space of complex-valued measurable functions $f$ on $\mathbb{R}^d$ satisfying the finiteness of the quasinorm defined as:
\begin{equation}\label{weak-Lp-commut}
\|f\|_{L^{p,\infty}(\mathbb{R}^d)}=\sup\limits_{t>0}t^\frac{1}{p}f^*(t)=\sup\limits_{s>0}sd(s,f)^\frac{1}{p}, 
\end{equation}
where $d(\cdot,f)$ represents the distribution of the function $f$. For a more comprehensive understanding of these spaces, we refer the reader to \cite{G2008}.

Let us suppose we have an integer $d\geq 1$, and we choose an antisymmetric $\mathbb{R}$-valued $d\times d$ matrix $\theta=\{\theta_{j,k}\}_{1\leq j,k\leq d}.$
\begin{definition}\label{NC_E_space1}Define $\mathbb{R}^{d}_{\theta}$ (or $L^{\infty}(\mathbb{R}^{d}_{\theta}))$ as the von Neumann algebra generated by the $d$-parameter strongly
continuous unitary family $\{U_{\theta}(t)\}_{t\in \mathbb{R}^{d}}$ satisfying the relations
\begin{equation}\label{weyl-relation}
U_{\theta}(t)U_{\theta}(s)=e^{\frac{1}{2}i(t,\theta s)}U_{\theta}(t+s),\quad t,s\in \mathbb{R}^d,
\end{equation}
where $(\cdot,\cdot)$ means the usual inner product in $\mathbb{R}^d.$
\end{definition}
The relation mentioned above is known as the Weyl representation of the canonical commutation relation. While it is viable to define $L^{\infty}(\mathbb{R}_{\theta}^d)$ in an abstract operator-theoretic manner as outlined in \cite{GGSVM}, an alternative approach involves defining the algebra through a specific set of operators defined on the Hilbert space $L^2(\mathbb{R}^d)$.  
\begin{definition} \cite[Definition 2.1]{Mc}\label{NC_E_space2} Let $U_{\theta}(t)$ denote the operator on $L^{2}(\mathbb{R}^{d})$ for $t\in\mathbb{R}^d$, defined by:
    $$(U_{\theta}(t)\xi)(s)=e^{i(t,s)}\xi(s-\frac{1}{2}\theta t),\quad \xi\in L^{2}(\mathbb{R}^{d}),  \, t,s\in \mathbb{R}^d.$$
It can be demonstrated that the family $\{U_{\theta}(t)\}_{t\in \mathbb{R}^{d}}$ is strongly continuous and satisfies the relation \eqref{weyl-relation}.
Subsequently, the von Neumann algebra $L^{\infty}(\mathbb{R}_{\theta}^d)$ is defined as the weak operator topology closed subalgebra of $B(L^{2}(\mathbb{R}^{d}))$ generated by the family $\{U_{\theta}(t)\}_{t\in \mathbb{R}^{d}}$, and is called a {\color{red}noncommutative} (or quantum) Euclidean space.
\end{definition}
{\color{red} It should be noted that when $\theta=0$,} the definition provided above reduces to characterizing $L^{\infty}(\mathbb{R}^{d})$ as the algebra of bounded pointwise multipliers on $L^{2}(\mathbb{R}^{d})$.

Generally, the algebraic nature of the noncommutative Euclidean space $L^{\infty}(\mathbb{R}_{\theta}^d)$ depends on the dimension of the kernel of $\theta.$ In the case $d=2,$ up to an orthogonal conjugation $\theta$ may be given as 
\begin{equation}\label{d=2}
    \theta=h\begin{pmatrix}
0 & -1\\
1 & 0
\end{pmatrix} 
\end{equation}
for some constant $h>0.$ In this case, $L^{\infty}(\mathbb{R}_{\theta}^2)$ is $*$-isomorphic to $B(L^{2}(\mathbb{R}))$ and this $*$-isomorphism can be written as 
$$U_{\theta}(t)\to e^{it_1\mathbf{M}_s +it_2 h \frac{d}{ds}},$$
where $\mathbf{M}_s\xi(s)=s\xi(s)$ and $\frac{d}{ds}\xi(s)=\xi'(s)$ is the differentiation.
If $d\geq 2,$ then an arbitrary $d\times d$ antisymmetric real matrix can be expressed (up to orthogonal conjugation) as a direct sum of a zero matrix and matrices of the form \eqref{d=2}, ultimately leading to the $*$-isomorphism
\begin{equation}\label{direct-sum}
    L^{\infty}(\mathbb{R}_{\theta}^d)\cong L^{\infty}(\mathbb{R}^{\dim(\ker(\theta))})\bar{\otimes}B(L^{2}(\mathbb{R}^{\text{rank}(\theta)/2})),
\end{equation}
where $\bar{\otimes}$ is the von Neumann tensor product \cite{MSX}. In particular, if $\det(\theta)\neq 0,$ then \eqref{direct-sum} reduces to
\begin{equation}\label{reduced-direct-sum}
    L^{\infty}(\mathbb{R}_{\theta}^d)\cong B(L^{2}(\mathbb{R}^{d/2})).
\end{equation}
It is worth noting that these formulas make sense due to the fact that the rank of an antisymmetric matrix is always even.
\subsection{Noncommutative integration} 
Given $f\in L^{1}(\mathbb{R}^d),$ we define the  operator $\lambda_{\theta}(f)$   by the formula
\begin{equation}\label{def-integration}
\lambda_{\theta}(f)\xi=\int_{\mathbb{R}^d}f(t) U_{\theta}(t)\xi dt, \quad \xi\in L^{2}(\mathbb{R}_{\theta}^d).
\end{equation}
This integral converges absolutely in the Bochner sense, yielding a bounded linear operator $\lambda_{\theta}(f): L^{2}(\mathbb{R}^d)\to L^{2}(\mathbb{R}^d)$ such that $\lambda_{\theta}(f)\in L^{\infty}(\mathbb{R}_{\theta}^d)$ (see \cite[Lemma 2.3]{MSX}). 
Let us use $\mathcal{S}(\mathbb{R}^d)$ to represent the classical Schwartz space on $\mathbb{R}^d.$ For any $f\in\mathcal{S}(\mathbb{R}^d)$ we define the Fourier transform as:
$$
\widehat{f}(t)=\int_{\mathbb{R}^d}f(s)e^{-i(t,s)}ds, \quad t\in \mathbb{R}^d.
$$ 
The noncommutative Schwartz space $\mathcal{S}(\mathbb{R}_{\theta}^d)$ is precisely the set of elements of $L^{\infty}(\mathbb{R}_{\theta}^d)$ that can be expressed as  $\lambda_{\theta}(f)$ for some  $f$ belonging to the classical Schwartz space $\mathcal{S}(\mathbb{R}^d).$ In other words:
$$
\mathcal{S}(\mathbb{R}_{\theta}^d):=\{x\in L^{\infty}(\mathbb{R}_{\theta}^d):x=\lambda_{\theta}(f) \,\ \text{for some}\,\ f\in \mathcal{S}(\mathbb{R}^d)\}.
$$
We equip $\mathcal{S}(\mathbb{R}_{\theta}^d)$
with a topology induced by the canonical Fr\'{e}chet topology on   $\mathcal{S}(\mathbb{R}^d)$ via the map $\lambda_{\theta}.$ 
 The topological dual of $\mathcal{S}(\mathbb{R}_{\theta}^d)$ is denoted as  $\mathcal{S}'(\mathbb{R}_{\theta}^d).$ 
 Moreover, the injectivity of the mapping $\lambda_{\theta}$ is established \cite[Subsection 2.2.3]{MSX}, \cite{Mc}. {\color{red}For any $g \in \mathcal{S}(\mathbb{R}^d),$ the reflection   $\widetilde{g}$ is given by $\widetilde{g}(\xi) := g(-\xi),$ $\xi \in \mathbb{R}^d,$ and this convention is used throughout the paper.} Therefore, the mapping $\lambda_{\theta}$ can be extended to distributions 
 \begin{eqnarray}\label{injectivity}
(\lambda_{\theta}(f), \lambda_{\theta}(g))=\langle f, \widetilde{g}\rangle \quad \text{for all} \quad f,g\in\mathcal{S}(\mathbb{R}^d). 
\end{eqnarray}
For any $f\in \mathcal{S}(\mathbb{R}^d)$, we define the functional $\tau_{\theta}:\mathcal{S}(\mathbb{R}_{\theta}^d)\to \mathbb{C}$  with the formula:
\begin{equation}\label{trace-def}
\tau_{\theta}(\lambda_{\theta}(f))=\tau_{\theta}\left(\int_{\mathbb{R}^d}f(\eta)U_{\theta}(\eta)d\eta\right):= f(0)
\end{equation}

This functional $\tau_{\theta}$ can be extended to a semifinite normal trace on $L^{\infty}(\mathbb{R}_{\theta}^d)$. Furthermore, if $\theta=0$, then $\tau_{\theta}$ coincides exactly with the Lebesgue integral under a suitable isomorphism. If $\det(\theta)\neq0$, then $\tau_{\theta}$ is (up to normalization) the operator trace on $B(L^2(\mathbb{R}^{d/2}))$. For further details, we refer to \cite{GJP}, \cite[Lemma 2.7]{MSX}, \cite[Theorem 2.6]{Mc}.
\subsection{Noncommutative $L^{p}(\mathbb{R}^{d}_{\theta})$ and $L^{p,\infty}(\mathbb{R}^{d}_{\theta})$ spaces}
Given the definitions outlined in the earlier sections, $L^{\infty}(\mathbb{R}^{d}_{\theta})$ emerges as a semifinite von Neumann algebra, with   $\tau_{\theta}$ serving as its trace. Thus, the pair $(L^{\infty}(\mathbb{R}^{d}_{\theta}),\tau_{\theta})$ 
 is characterized as a noncommutative measure space. We can define the  $L^p$-norm on this space for any $1\leq p<\infty$  using the Borel functional calculus and the following expression:
$$\|x\|_{L^p(\mathbb{R}^d_{\theta})}=\Big(\tau_{\theta}(|x|^p)\Big)^{1/p},\quad x\in L^{\infty}(\mathbb{R}^{d}_{\theta}), $$
where $|x|:=(x^{*}x)^{1/2}.$ The space $L^p(\mathbb{R}^d_{\theta})$  is the completion of the set $\{x\in L^{\infty}(\mathbb{R}^{d}_{\theta}) :\|x\|_{p}<\infty\}$ under the norm $\|\cdot\|_{L^p(\mathbb{R}^d_{\theta})},$ and this completion is denoted by $L^p(\mathbb{R}^d_{\theta}).$ The elements of $L^p(\mathbb{R}^d_{\theta})$ are $\tau_{\theta}$-measurable operators, like in the commutative case. These are linear densely defined closed (possibly unbounded) affiliated with $L^{\infty}(\mathbb{R}^{d}_{\theta})$ operators such that $\tau_{\theta}(\mathbf{1}_{(s,\infty)}(|x|))<\infty$ for some $s>0.$ Here, $\mathbf{1}_{(s,\infty)}(|x|)$ denotes the spectral projection corresponding to the interval $(s,\infty).$ 
Let $L^{0}(\mathbb{R}^{d}_{\theta})$ 
 represent the collection of all
$\tau_{\theta}$-measurable operators.  
 The {\it distribution function} of $x$ is defined by
$$
d(s;x)=\tau_{\theta}\left(\mathbf{1}_{(s,\infty)}(|x|)\right), \quad 0<s<\infty.
$$
For $x\in L^{0}(\mathbb{R}^{d}_{\theta}),$ we define the {\it generalised singular value function} $\mu(t, x)$ as follows:
\begin{equation}\label{distribution-function}
\mu(t,x)=\inf\left\{s>0: d(s;x)\leq t\right\}, \quad t>0.
\end{equation}
The function $t\mapsto\mu(t,x)$ is decreasing and right-continuous. For further discussion on generalized singular value functions, we direct the reader to \cite{FK, LSZ}. The norm of $L^p(\mathbb{R}^d_{\theta})$ can also be represented using the generalized singular value function, as outlined in (see \cite[Example 2.4.2, p. 53]{LSZ}): 
\begin{equation}\label{mu-norm}
\|x\|_{L^p(\mathbb{R}^d_{\theta})}=\left(\int_{0}^{\infty}\mu^{p}(s,x)ds\right)^{1/p}, \,\ \text{if} \,\ p<\infty, \|x\|_{L^{\infty}(\mathbb{R}^d_{\theta})}=\mu(0,x), \,\, \text{if}\,\ p=\infty.
\end{equation}
The equality for $p=\infty,$ was established in  \cite[Lemma 2.3.12. (b), p. 50]{LSZ}.

The space $L^{0}(\mathbb{R}^{d}_{\theta})$ is a $*$-algebra, which can be endowed with a topological  $*$-algebra structure in the following manner.  We consider the set
$$V(\varepsilon,\delta)=\{x\in L^{0}(\mathbb{R}^{d}_{\theta}): \mu(\varepsilon,x)\leq \delta\}.$$
Then, the family $\{V(\varepsilon,\delta): \varepsilon,\delta>0\}$ constitutes a system of neighbourhoods at 0, resulting in $L^{0}(\mathbb{R}^{d}_{\theta})$  becoming a metrizable topological $*$-algebra. The convergence induced by this topology is called the {\it convergence in measure} \cite{PXu}. Next, we establish the noncommutative Lorentz space linked with the noncommutative Euclidean space.

For $1\leq p\leq\infty,$ we introduce the non-commutative Lorentz space $L^{p,\infty}(\mathbb{R}^{d}_{\theta})$ as follows:
$$\|x\|_{L^{p,\infty}(\mathbb{R}^{d}_{\theta})}:=\sup_{t>0}t^{\frac{1}{p}}\mu(t,x)=\sup_{s>0}sd(s;x)^{\frac{1}{p}}.
$$

These spaces constitute noncommutative quasi-Banach spaces. For an extensive exploration of  $L^p$ and Lorentz spaces corresponding to general semifinite von Neumann algebras, we suggest referring to \cite{DPS}, \cite{LSZ}, \cite{PXu}.

\subsection{Differential calculus on $L^{\infty}(\mathbb{R}^d_\theta)$} 
Let us recall the differential structure on $L^{\infty}(\mathbb{R}^d_\theta),$ as detailed in (see, \cite[Subsection 2, p. 10]{Mc}).

The differential structure relies on the group of translations $\{T_\eta\}_{\eta\in\mathbb{R}^d},$ where $T_\eta$ is presented as the unique $\ast$-automorphism of $L^{\infty}(\mathbb{R}^d_\theta)$ that operates on $U_{\theta}(\xi)$ as follows:
\begin{equation}\label{def-translations-1}
T_\eta(U_{\theta}(\xi))=e^{i(\eta,\xi)}U_{\theta}(\xi), \quad  
\xi,\eta \in\mathbb{R}^d, 
\end{equation}
where $(\cdot,\cdot)$ denotes the standard inner product in $\mathbb{R}^d.$

Expressed in terms of the map  $\lambda_{\theta},$ we observe:
\begin{eqnarray}\label{property-translation}
T_\eta(\lambda_{\theta}(f))=\lambda_{\theta}(e^{i(\eta,\cdot)}f(\cdot)),    
\end{eqnarray}
for all $f\in \mathcal{S}(\mathbb{R}^d).$

Alternatively, for $x \in L^{\infty}(\mathbb{R}^d_\theta) \subseteq B(L^2(\mathbb{R}^d_\theta)),$  we can introduce  $T_\eta(x)$ as the conjugation of $x$ by the unitary operator representing translation by  $\eta$  on $L^2(\mathbb{R}^d_\theta).$   
\begin{definition} (see, \cite[Definition 2.9]{Mc}) An element $x \in L^{1}(\mathbb{R}^d_\theta) + L^{\infty}(\mathbb{R}^d_\theta)$   is considered smooth if, for all  
$y \in L^{1}(\mathbb{R}^d_\theta) \cap L^{\infty}(\mathbb{R}^d_\theta)$ the function $\eta \mapsto \tau_{\theta}(yT_\eta(x))$ is smooth.
\end{definition}

The partial derivatives  $\partial^{\theta}_j,$ where $  j = 1, \dots, d,$ are defined on smooth elements $x$ as follows:  
$$
\partial^{\theta}_j(x) = \frac{d}{ds_{j}}T_\eta(x)|_{\eta=0}. 
$$
Using  \eqref{def-integration} and \eqref{def-translations-1}, we can readily confirm that for $x=\lambda_{\theta}(f),$  the following holds:
\begin{equation}\label{operator-derivation}
\partial^{\theta}_j(x)\overset{\eqref{def-integration}}{=}\partial^{\theta}_j\lambda_{\theta}(f)\overset{\eqref{def-translations-1}}{=}\lambda_{\theta}(it_{j}f(t)), \quad  j=1,\cdots,d, \,\  f\in \mathcal{S}(\mathbb{R}^d).
\end{equation}
We introduce the notation  $\partial^{\alpha}_{\theta}$
for a multi-index $\alpha=(\alpha_1,...,\alpha_d)$ as follows:
$$
\partial^{\alpha}_{\theta}=(\partial^{\theta}_{1})^{\alpha_{1}}\dots(\partial^{\theta}_{d})^{\alpha_{d}}. 
$$
{\color{red}The operator $\nabla_{\theta}$ is defined by
$$
\nabla_{\theta} = \left( \partial^{\theta}_1, \dots, \partial^{\theta}_d \right),
$$
with domain 
$$
\bigcap_{j=1}^d L^2\left( \mathbb{R}^d, t_j^2\, dt \right).
$$
It is straightforward to verify that for any $t \in \mathbb{R}^d$, the operator $\exp\left((t, \nabla_{\theta})\right)$ acts on $L^2(\mathbb{R}^d)$ as
$$
\left( \exp\left((t, \nabla_{\theta})\right) \xi \right)(r) = \exp(i (t, r))\, \xi(r), \quad r \in \mathbb{R}^d,\, \xi \in L^2(\mathbb{R}^d).
$$}
Furthermore, we define the Laplace operator $\Delta_{\theta}$ as
\begin{equation}\label{laplacian}
\Delta_{\theta} = (\partial_1^{\theta})^2   + \cdots +(\partial_d^{\theta})^2, 
\end{equation}
where  $-\Delta_{\theta}$  acts as a positive operator on  $L^2(\mathbb{R}^{d}_\theta)$ (see  \cite {MSX} and \cite{Mc}).
{\color{red}Here, the operators $\Delta_{\theta}$ and $\nabla_{\theta}$ do not explicitly depend on the matrix $\theta$. However, we retain this notation to underline their association with the algebra $L^\infty(\mathbb{R}^d_\theta).$}

Through the concept of duality, we can further extend the derivatives $\partial^{\alpha}_{\theta}$ to operators on  $\mathcal{S}'(\mathbb{R}_{\theta}^d).$

In a manner of the classical setting, let us denote the pairing between $x$ from $\mathcal{S}'(\mathbb{R}_{\theta}^d)$ and  $y$  from  $\mathcal{S}(\mathbb{R}_{\theta}^d)$  as  $(x, y).$ We embed the space $L^1(\mathbb{R}^d_{\theta})+ L^\infty(\mathbb{R}^d_{\theta})$  into $\mathcal{S}'(\mathbb{R}_{\theta}^d)$ via:
$$
\langle x, y\rangle:=\tau_{\theta}(xy),\quad x\in L^1(\mathbb{R}^d_{\theta})+ L^\infty(\mathbb{R}^d_{\theta}), \quad y\in \mathcal{S}(\mathbb{R}_{\theta}^d).
$$
For a multi-index $\alpha 
\in \mathbb{N}^d_0$ and $x \in \mathcal{S}'(\mathbb{R}_{\theta}^d),$ the distribution  $\partial^\alpha_{\theta}{x}$  is defined as:
$$
\langle \partial^\alpha_{\theta}{x}, y\rangle =
(-1)^{|\alpha|}\langle x,\partial^\alpha_{\theta}{y}\rangle,\quad x\in\mathcal{S}'(\mathbb{R}_{\theta}^d), \quad y\in\mathcal{S}(\mathbb{R}_{\theta}^d). 
$$
Moreover, {\color{red} the Laplace operator $\Delta_{\theta}$ on $\mathcal{S}'(\mathbb{R}_{\theta}^d)$ is defined by} 
\begin{eqnarray}\label{extension-lapasian}
\langle \Delta_{\theta} {x}, y\rangle =\langle x, \Delta_{\theta} {y}\rangle,\quad x\in\mathcal{S}'(\mathbb{R}_{\theta}^d), \quad y\in\mathcal{S}(\mathbb{R}_{\theta}^d).   
\end{eqnarray}

\subsection{Fourier transform  on NC Euclidean spaces} 
\begin{definition}\label{F-transform}
For any $x \in \mathcal{S}(\mathbb{R}_{\theta}^d),$ we define the Fourier transform of $x$ as the map $\lambda_{\theta}^{-1}:\mathcal{S}(\mathbb{R}_{\theta}^d)\to \mathcal{S}(\mathbb{R}^d)$ by the formula
\begin{equation}\label{direct-F-transform}
\lambda_{\theta}^{-1}(x):=\widehat{x}, \quad \widehat{x}(s)=\tau_{\theta}(xU_{\theta}(s)^*), \,\ s\in \mathbb{R}^d.
\end{equation}
\end{definition}
Moreover, there is the Plancherel (Parseval) identity \cite{MSX}
\begin{equation}\label{Plancherel}
\|\lambda_{\theta}(f)\|_{L^{2}(\mathbb{R}^{d}_{\theta})}=\|f\|_{L^{2}(\mathbb{R}^{d})},\quad f\in L^{2}(\mathbb{R}^{d}).
\end{equation}

\begin{lem}\label{L-Jensen-inequality}(\cite{C1974}, \cite{D1957}, \cite[Theorem 1.20]{PTJ}) Let $H$ and $H_1$ 
be Hilbert spaces. Suppose
that $h$ is an operator convex  function on the interval $I$ and let $x\in  B(H)$ {\color{red} be a self-adjoint operator with} the spectrum $Sp(x)\subset I.$ Then we
have
\begin{eqnarray}\label{Jensen-inequality}
\pi(h(x))\geq h(\pi(x)),
\end{eqnarray}
for any positive norm\textbf{}alized linear map $\pi:B(H)\rightarrow B(H_1).$
\end{lem}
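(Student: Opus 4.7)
The plan is to prove this as the classical Choi--Davis--Jensen inequality, using a Stinespring-type dilation of $\pi$ combined with a key compression inequality that follows from operator convexity via a self-adjoint unitary built from an orthogonal projection.

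First I would invoke the Stinespring dilation of the positive normalized (i.e., unital) map $\pi$: there exist a Hilbert space $K$, an isometry $V:H_1 \to K$, and a $\ast$-homomorphism $\rho:B(H)\to B(K)$ such that $\pi(a)=V^{\ast}\rho(a)V$ for all $a\in B(H)$. Since $\rho$ preserves the continuous functional calculus, $\rho(h(x))=h(\rho(x))$, and $\mathrm{Sp}(\rho(x))\subset \mathrm{Sp}(x)\subset I$. The problem then reduces to establishing the compression inequality
\begin{equation*}
V^{\ast} h(y) V \geq h(V^{\ast} y V)
\end{equation*}
for every self-adjoint $y\in B(K)$ with $\mathrm{Sp}(y)\subset I$ and every isometry $V:H_1\to K$.

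For the compression inequality I would set $P=VV^{\ast}$ and $U=2P-I$, which is a self-adjoint unitary on $K$. A direct block computation gives $\tfrac{1}{2}(y+UyU)=PyP+(I-P)y(I-P)$, and since $Uh(y)U=h(UyU)$, operator convexity of $h$ yields
\begin{equation*}
Ph(y)P+(I-P)h(y)(I-P)=\tfrac{1}{2}\bigl(h(y)+h(UyU)\bigr)\geq h\bigl(PyP+(I-P)y(I-P)\bigr).
\end{equation*}
The right-hand side commutes with $P$, so compressing both sides by $V^{\ast}\,\cdot\, V$ kills the off-diagonal contributions, and the unitary equivalence between $PyP|_{PK}$ and $V^{\ast}yV$ induced by $V$ yields $V^{\ast}h(y)V\geq h(V^{\ast}yV)$. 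Combining with the dilation step produces $\pi(h(x))=V^{\ast}h(\rho(x))V\geq h(V^{\ast}\rho(x)V)=h(\pi(x))$, which is the claim.

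The main obstacle is justifying a Stinespring-type dilation when $\pi$ is only positive and normalized rather than completely positive. Operator convexity of $h$ is precisely what rescues the argument in that generality: in the Hansen--Pedersen formulation the same $U=2P-I$ trick can be run without any complete positivity assumption by first passing to the $2\times 2$ matrix dilation of $\pi$ against which $h$ is still operator convex, which is why the inequality is stated for merely positive unital maps and matches the citations \cite{C1974, D1957, PTJ}.
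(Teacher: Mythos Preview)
The paper does not prove this lemma; it is quoted with citations and used as a black box, so there is no proof in the paper to compare against. Your overall architecture is the standard route to the Choi--Davis--Jensen inequality, and the compression step via the symmetry $U=2P-I$ is carried out correctly.

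The genuine gap is in your last paragraph. Stinespring dilation requires complete positivity, and your proposed remedy --- a ``$2\times 2$ matrix dilation of $\pi$ against which $h$ is still operator convex'' --- is not a recognisable argument: operator convexity of $h$ is a property of the function, not of $\pi$, and it plays no role in manufacturing a dilation of a merely positive map. The correct and much simpler fix is the one actually used in the cited sources: since $x$ is self-adjoint, the unital $C^{\ast}$-subalgebra $C^{\ast}(1,x)\subset B(H)$ is \emph{commutative}, and every positive unital map with commutative domain is automatically completely positive. Hence $\pi|_{C^{\ast}(1,x)}$ admits a genuine Stinespring dilation $\pi(a)=V^{\ast}\rho(a)V$ for $a\in C^{\ast}(1,x)$. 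Both $x$ and $h(x)$ lie in $C^{\ast}(1,x)$, so your reduction to the compression inequality then goes through verbatim. With this one-line correction the proof is complete and matches the arguments in \cite{C1974,D1957,PTJ}.
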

Note that, for operator concave functions, the inequality is reversed.

\subsection{Convolution on $\mathbb{R}^d_{\theta}$}

As it was already introduced in \cite[Section 3.2]{MSX}, \cite[Section 2.4]{M}), we define the convolution on  $\mathbb{R}^d_{\theta}.$

\begin{definition}\label{Def_convolution} Let $1\leq p \leq \infty$ and $x \in L^p(\mathbb{R}^d_{\theta
}).$ For $K\in L^1(\mathbb{R}^{d}),$ we define
\begin{eqnarray}\label{def-convolution}
K\ast{x}=\int\limits_{\mathbb{R}^{d}}K(\eta)T_{-\eta}(x)d\eta,
\end{eqnarray}
where the integral is understood in the sense of a $L^p(\mathbb{R}^{d})$ - valued Bochner integral when $ p < \infty,$ and as a weak* integral when $p = \infty.$ A computation using the Definition \ref{Def_convolution} shows that
\begin{equation}\label{FT_convolution}
 K\ast{x}= \lambda_\theta(\widehat{K} f).
\end{equation}
\end{definition}
The following remark can be found from \cite{Mc}.
\begin{rem}\label{positivity-convolution} The convolution \eqref{def-convolution} preserves the positivity: if $x\geq 0$ and $K \geq 0,$ then
$$ K* x\geq 0.$$
\end{rem}
\begin{definition}\label{new_conv_1}For any $x,y\in \mathcal{S}(\mathbb{R}_{\theta}^d)$ define   
$$
(x\diamond{y})(\eta):=\tau_{\theta}(xT_{-\eta}(y)), \quad \eta\in\mathbb{R}^d.
$$
\end{definition}
The following proposition shows that this is, indeed, well defined and belongs to the class of Schwartz functions.
\begin{prop}\label{traslation-function} If $x,y\in \mathcal{S}(\mathbb{R}_{\theta}^d),$ then $x\diamond{y}$ as a function belongs to $\mathcal{S}(\mathbb{R}^d).$ In other words, $x\diamond{y}\in\mathcal{S}(\mathbb{R}^d).$
\end{prop}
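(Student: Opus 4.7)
The plan is to reduce everything to the map $\lambda_\theta$ and exhibit $x \diamond y$ explicitly as the Fourier transform of a product of Schwartz functions. Since $x, y \in \mathcal{S}(\mathbb{R}^d_\theta)$, I can write $x = \lambda_\theta(f)$ and $y = \lambda_\theta(g)$ with $f, g \in \mathcal{S}(\mathbb{R}^d)$. By \eqref{property-translation},
$$
T_{-\eta}(y) = \lambda_\theta\bigl(e^{-i(\eta,\,\cdot\,)} g(\cdot)\bigr), \qquad \eta \in \mathbb{R}^d.
$$
Thus $x \, T_{-\eta}(y) = \lambda_\theta(f)\, \lambda_\theta(e^{-i(\eta,\,\cdot\,)} g)$, and the first step is to compute this product of operators.

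Next, I would use \eqref{def-integration} together with the Weyl relation \eqref{weyl-relation} to derive the standard twisted-convolution formula
$$
\lambda_\theta(F)\, \lambda_\theta(G) \;=\; \lambda_\theta(F *_\theta G), \qquad (F *_\theta G)(u) = \int_{\mathbb{R}^d} F(t)\, G(u-t)\, e^{\frac{1}{2} i (t,\, \theta u)}\, dt,
$$
which is just the change of variables $u = t+s$ in the double integral defining the product. The antisymmetry of $\theta$ makes the phase $e^{\frac{1}{2}i(t,\theta t)}$ equal to $1$, so $*_\theta$ maps $\mathcal{S}(\mathbb{R}^d) \times \mathcal{S}(\mathbb{R}^d)$ into $\mathcal{S}(\mathbb{R}^d)$ in the usual way.

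Applying the trace \eqref{trace-def}, which evaluates the symbol at $0$, kills the twisting phase entirely:
$$
\tau_\theta\!\bigl(x\, T_{-\eta}(y)\bigr) = \bigl(f *_\theta (e^{-i(\eta,\,\cdot\,)} g)\bigr)(0) = \int_{\mathbb{R}^d} f(t)\, e^{i(\eta,\, t)}\, g(-t)\, dt = \int_{\mathbb{R}^d} f(t)\, \widetilde{g}(t)\, e^{i(\eta,\, t)}\, dt.
$$
This identifies $(x \diamond y)(\eta)$ as (the reflection of) the classical Fourier transform of the pointwise product $f \cdot \widetilde{g}$. Since $\mathcal{S}(\mathbb{R}^d)$ is closed under pointwise multiplication, reflection, and the Fourier transform, we conclude $x \diamond y \in \mathcal{S}(\mathbb{R}^d)$.

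There is no real obstacle here; the only point requiring a bit of care is verifying the twisted-convolution identity for $\lambda_\theta(F)\lambda_\theta(G)$ and justifying the use of the trace inside the integral defining the product, both of which are routine given that $f, e^{-i(\eta,\,\cdot\,)}g \in \mathcal{S}(\mathbb{R}^d)$ and the integrals converge absolutely in Bochner sense as in the discussion after \eqref{def-integration}.
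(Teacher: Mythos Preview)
Your proof is correct and follows essentially the same route as the paper: write $x=\lambda_\theta(f)$, $y=\lambda_\theta(g)$, use the twisted-convolution identity $\lambda_\theta(F)\lambda_\theta(G)=\lambda_\theta(F*_\theta G)$ together with \eqref{property-translation}, and then apply the trace \eqref{trace-def} to identify $(x\diamond y)(\eta)$ with the Fourier transform of the Schwartz function $\tilde{f}g$ (equivalently, of $f\tilde{g}$ up to a reflection). The paper simply cites the product formula from \cite{MSX} rather than deriving it, but the argument is otherwise the same.
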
  
\begin{proof} Let $x,y\in \mathcal{S}(\mathbb{R}_{\theta}^d).$  Then we can write $x=\lambda_{\theta}(f)$ and $y=\lambda_{\theta}(g)$ for some $f$ and $g$ in $\mathcal{S}(\mathbb{R}^d),$  respectively. Moreover,  we obtain the following equality (see, \cite[formula 2.13]{MSX})
\begin{eqnarray}\label{Notation_1}
xy=\lambda_{\theta}(f)\lambda_{\theta}(g)=\lambda_{\theta}(f*_{\theta}g),
\end{eqnarray}
where {\color{red}the $\theta$-convolution   is defined as}
$$
f*_{\theta}g(\zeta)=\int\limits_{\mathbb{R}^d}e^{\frac{i}{2}(\zeta,\theta{\xi})}f(\zeta-\xi)g(\xi)d\xi.
$$
It follows from \eqref{property-translation} and \eqref{Notation_1}   that 
\begin{eqnarray}\label{calculation_1}
xT_{-\eta}(y)\overset{\eqref{property-translation}}{=}\lambda_{\theta}(f)\lambda_{\theta}(e^{-i(\eta,\cdot)}g)\overset{\eqref{Notation_1}}{=}\lambda_{\theta}(f*_{\theta}(e^{-i(\eta,\cdot)}g)).
\end{eqnarray}  
By Definition \ref{new_conv_1} we obtain 
\begin{eqnarray}\label{calculation_2}
(x\diamond{y})(\eta)=\tau_{\theta}(xT_{-\eta}(y))\overset{\eqref{calculation_1}}{=}\int\limits_{\mathbb{R}^d}f(-\xi)e^{-i(\eta,\xi)}g(\xi)d\xi=(\widehat{\tilde{f}g})(\eta).
\end{eqnarray}
Since $f,g\in\mathcal{S}(\mathbb{R}^d),$ it follows that the function $\widehat{\tilde{f}g}$ belongs to $\mathcal{S}(\mathbb{R}^d)$, thereby completing the proof. 
\end{proof}
\begin{definition}\label{Def_convolution2} Let $K\in\mathcal{S}'(\mathbb{R}^d)$ and $x \in \mathcal{S}(\mathbb{R}_{\theta}^d).$ Then we define a convolution $K\star x$  in  $\mathcal{S}'(\mathbb{R}_{\theta}^d)$  by
\begin{eqnarray}\label{extends-convolution}
 \langle K\star x,y\rangle=\langle K, x\diamond{y}\rangle,\quad  y\in  \mathcal{S}(\mathbb{R}_{\theta}^d).  
\end{eqnarray}
\end{definition}
For example, if  $K=\delta_0$ is the Dirac mass, then 
$$
\langle \delta_0 \star x,y\rangle \overset{\eqref{extends-convolution}}{=}\langle \delta_0, x\diamond{y}\rangle=(x\diamond{y})(0)=\tau_{\theta}(xy)=\langle x,y\rangle.   
$$
Hence, 
\begin{eqnarray}\label{delta}
\delta_0 \star x=x,\quad \forall x\in\mathcal{S}(\mathbb{R}_{\theta}^d). 
\end{eqnarray} 
{\color{red}\begin{rem} In the case  the function  $K \in \mathcal{S}(\mathbb{R}^d),$ the convolution in Definition \ref{Def_convolution2} coincides with that in Definition \ref{Def_convolution}. 
\end{rem}}
The following noncommutative analogue of the Varopoulos's theorem in the noncommutative Euclidean space can be inferred from \cite{Zhao}.
\begin{thm}\cite[Theorem 3.11 and Corollary 3.19]{Zhao}\label{Zhao-thm} Let $d>2.$  Then the following inequalities are equivalent, 
\begin{eqnarray*}\label{Soblov-inequaluty}
\|x\|_{L^{\frac{2d}{d-2}}(\mathbb{R}^d_{\theta})}\leq C   \|\nabla_{\theta}{x}\|_{L^{2}(\mathbb{R}^d_\theta)}\quad \text{(Sobolev  inequality)}, 
\end{eqnarray*}
\begin{eqnarray*}\label{Nash-inequaluty}
\|x\|^{1+\frac{2}{d}}_{L^2(\mathbb{R}^d_{\theta})}\leq C  \|\nabla_{\theta}{x}\|_{L^{2}(\mathbb{R}^d_\theta)}
\|x\|^{\frac{2}{d}}_{L^1(\mathbb{R}^d_{\theta})} \quad  \quad \text{(Nash inequality)},
\end{eqnarray*}
\begin{eqnarray*}\label{Heat-kernel-estimate}
\|e^{t\Delta_{\theta}}(x)\|_{L^{\infty}(\mathbb{R}^d_{\theta})} \leq C t^{-\frac{d}{2}}   \|x\|_{L^{1}(\mathbb{R}^d_{\theta})},\quad t>0 \quad \text{(Heat kernel estimate)},
\end{eqnarray*}
with possibly different values of the constants $C>0.$
\end{thm}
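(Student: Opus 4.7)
The plan is to establish the three-way equivalence by a circular chain of implications, following the classical argument of Varopoulos but invoking the noncommutative tools developed earlier in the paper.

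For Sobolev $\Rightarrow$ Nash, I would interpolate $L^2(\mathbb{R}^d_\theta)$ between $L^1(\mathbb{R}^d_\theta)$ and $L^{2d/(d-2)}(\mathbb{R}^d_\theta)$ using the noncommutative Hölder inequality. With the exponent $\alpha = d/(d+2) \in (0,1)$ one obtains
\[
\|x\|_{L^2(\mathbb{R}^d_\theta)} \leq \|x\|_{L^{2d/(d-2)}(\mathbb{R}^d_\theta)}^{\alpha}\,\|x\|_{L^1(\mathbb{R}^d_\theta)}^{1-\alpha}.
\]
Substituting the Sobolev bound on the right and raising both sides to the $(d+2)/d$-th power yields the Nash inequality.

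For Nash $\Rightarrow$ Heat kernel, set $u(t) = e^{t\Delta_\theta}(x)$. Integration by parts, using the trace identity $\tau_\theta(\partial^\theta_j(a)b) = -\tau_\theta(a\partial^\theta_j(b))$ and self-adjointness of $-\Delta_\theta$, produces the energy identity
\[
\frac{d}{dt}\|u(t)\|_{L^2(\mathbb{R}^d_\theta)}^2 = -2\,\|\nabla_\theta u(t)\|_{L^2(\mathbb{R}^d_\theta)}^2.
\]
Because $e^{t\Delta_\theta}$ is a positivity-preserving, completely positive, unital Markov semigroup on $L^\infty(\mathbb{R}^d_\theta)$, duality together with Lemma~\ref{L-Jensen-inequality} gives the $L^1$-contractivity $\|u(t)\|_{L^1(\mathbb{R}^d_\theta)} \leq \|x\|_{L^1(\mathbb{R}^d_\theta)}$. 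Inserting these into Nash gives the ODE $\phi'(t) \leq -C\,\phi(t)^{1+2/d}\|x\|_{L^1(\mathbb{R}^d_\theta)}^{-4/d}$ for $\phi(t) := \|u(t)\|_{L^2(\mathbb{R}^d_\theta)}^2$, which integrates explicitly to $\|u(t)\|_{L^2(\mathbb{R}^d_\theta)} \leq C' t^{-d/4}\|x\|_{L^1(\mathbb{R}^d_\theta)}$. The full $L^1 \to L^\infty$ estimate then follows from the semigroup composition $e^{2t\Delta_\theta} = e^{t\Delta_\theta}\circ e^{t\Delta_\theta}$ combined with self-adjointness, which yields $\|e^{2t\Delta_\theta}\|_{L^1\to L^\infty} \leq \|e^{t\Delta_\theta}\|_{L^1\to L^2}\,\|e^{t\Delta_\theta}\|_{L^2\to L^\infty}$ with the two factors equal.

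For Heat kernel $\Rightarrow$ Sobolev, I would use the subordination identity $(-\Delta_\theta)^{-1/2} = c_d \int_0^\infty t^{-1/2} e^{t\Delta_\theta}\,dt$. Interpolating the heat kernel estimate with the $L^2$-contractivity of $e^{t\Delta_\theta}$ yields $\|e^{t\Delta_\theta}\|_{L^p\to L^q} \lesssim t^{-\frac{d}{2}(1/p - 1/q)}$ for $1\leq p\leq q\leq \infty$. Splitting the defining integral at some $t_0 > 0$ and optimizing in $t_0$ produces the weak-type Hardy--Littlewood--Sobolev bound $(-\Delta_\theta)^{-1/2}\colon L^2(\mathbb{R}^d_\theta) \to L^{2d/(d-2),\infty}(\mathbb{R}^d_\theta)$, which Marcinkiewicz interpolation (applied at two nearby values of $p$) upgrades to the strong-type Sobolev estimate, using that $\|(-\Delta_\theta)^{1/2} x\|_{L^2(\mathbb{R}^d_\theta)} = \|\nabla_\theta x\|_{L^2(\mathbb{R}^d_\theta)}$ by the Plancherel identity \eqref{Plancherel}.

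The main obstacle I expect is the rigorous justification of the energy identity and the $L^1$-contractivity step, since these involve unbounded $\tau_\theta$-measurable operators, differentiation under the trace, and Markov-semigroup duality in the noncommutative setting; the operator-convexity machinery encapsulated in Lemma~\ref{L-Jensen-inequality} is the right vehicle for transferring the contraction property from $L^\infty(\mathbb{R}^d_\theta)$ to $L^1(\mathbb{R}^d_\theta)$.
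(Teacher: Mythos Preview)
The paper does not supply its own proof of this statement: Theorem~\ref{Zhao-thm} is quoted directly from \cite[Theorem 3.11 and Corollary 3.19]{Zhao} as a known result, with no proof given in the present paper. Your circular Varopoulos-style argument is the standard route and is almost certainly what is carried out in \cite{Zhao}, so there is no meaningful discrepancy to report.

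One small comment on your outline: invoking Lemma~\ref{L-Jensen-inequality} for the $L^1$-contractivity of $e^{t\Delta_\theta}$ is more machinery than needed here. Since the heat semigroup on $\mathbb{R}^d_\theta$ is realised as convolution with the classical Gaussian (cf.\ the paper's final subsection), positivity preservation and the trace identity $\tau_\theta(T_{-\eta}(x))=\tau_\theta(x)$ give $\|u(t)\|_{L^1(\mathbb{R}^d_\theta)}=\|x\|_{L^1(\mathbb{R}^d_\theta)}$ directly for $x\geq 0$, exactly as in \eqref{solution-ineq}; alternatively, self-adjointness of the semigroup together with the obvious $L^\infty$-contraction yields $L^1$-contraction by duality. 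Either route avoids operator convexity entirely.
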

\section{Hardy–Littlewood–Sobolev inequality}
In this section, we prove a quantum analogue of the Hardy–Littlewood–Sobolev inequality. First, we prove a noncommutative version of \cite[Theorem 1.2.13, p. 23]{G2008} for weak-$L^p$ spaces. 

\begin{thm}\label{(Young-inequality-weak-type-spaces}(Young inequality for weak type spaces) Let $1 \leq  p <\infty$ and
$1 < q, r <\infty $ satisfying  
$$
1+\frac{1}{r}= \frac{1}{p}+\frac{1}{q}. 
$$
Then there exists a constant $C_{p,q,r} > 0$ such that
\begin{eqnarray}\label{Young-convolution-inequality-2}
\|K*{x}\|_{L^{r,\infty}( \mathbb{R}^{d}_{\theta})} \leq  C_{p,q,r}\|K\|_{L^{q,\infty}(\mathbb{R}^{d})} \|x\|_{L^p(\mathbb{R}^{d}_{\theta})},  
\end{eqnarray} 
for all $K\in L^{q,\infty}(\mathbb{R}^{d})$ and $x\in L^p(\mathbb{R}^{d}_{\theta}).$
\end{thm}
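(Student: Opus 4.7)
The plan is to deduce the weak-type estimate from the strong-type Young inequality via real interpolation, mirroring the classical argument of \cite[Theorem 1.2.13]{G2008}. Fix $x \in L^{p}(\mathbb{R}^{d}_{\theta})$ and consider the linear operator $T_{x} : K \mapsto K * x$. The noncommutative Young inequality of \cite{Mc} asserts that whenever $1 \leq a \leq \infty$ and $b$ is defined by $1 + 1/b = 1/p + 1/a$, one has
\begin{equation*}
\|T_{x} K\|_{L^{b}(\mathbb{R}^{d}_{\theta})} \leq \|K\|_{L^{a}(\mathbb{R}^{d})}\, \|x\|_{L^{p}(\mathbb{R}^{d}_{\theta})}, \quad K \in L^{a}(\mathbb{R}^{d}).
\end{equation*}
Under the hypotheses of the theorem, I would choose exponents $q_{0} < q < q_{1}$ sufficiently close to $q$ so that the associated $r_{0}, r_{1}$ determined by $1 + 1/r_{i} = 1/p + 1/q_{i}$ lie in $(1, \infty)$ and satisfy $r_{0} < r < r_{1}$; this requires only keeping $q_{1}$ strictly below the H\"older conjugate of $p$, which is possible because $q$ itself lies strictly below this conjugate.

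Next, I would invoke the real-interpolation identification of noncommutative Lorentz spaces: for the unique $\alpha \in (0, 1)$ solving $1/q = (1-\alpha)/q_{0} + \alpha/q_{1}$ (which, by a direct computation, also gives $1/r = (1-\alpha)/r_{0} + \alpha/r_{1}$), one has
\begin{equation*}
\bigl( L^{q_{0}}(\mathbb{R}^{d}), L^{q_{1}}(\mathbb{R}^{d}) \bigr)_{\alpha, \infty} = L^{q,\infty}(\mathbb{R}^{d}), \quad \bigl( L^{r_{0}}(\mathbb{R}^{d}_{\theta}), L^{r_{1}}(\mathbb{R}^{d}_{\theta}) \bigr)_{\alpha, \infty} = L^{r,\infty}(\mathbb{R}^{d}_{\theta}),
\end{equation*}
with equivalent quasinorms. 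The commutative identification is classical, while the noncommutative one is due to Pisier--Xu and can be extracted from \cite{PXu, LSZ}. Applying the real-interpolation functor $(\cdot, \cdot)_{\alpha, \infty}$ to the two strong-type bounds for $T_{x}$ then produces
\begin{equation*}
\|K * x\|_{L^{r,\infty}(\mathbb{R}^{d}_{\theta})} \leq C_{p,q,r}\, \|K\|_{L^{q,\infty}(\mathbb{R}^{d})}\, \|x\|_{L^{p}(\mathbb{R}^{d}_{\theta})},
\end{equation*}
which is precisely \eqref{Young-convolution-inequality-2}.

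The main subtlety is that a generic $K \in L^{q,\infty}(\mathbb{R}^{d})$ need not be integrable, so Definition \ref{Def_convolution} does not literally apply and only the distributional convolution of Definition \ref{Def_convolution2} is available a priori. I would handle this by first proving the inequality on the dense subspace $L^{q,\infty}(\mathbb{R}^{d}) \cap L^{1}(\mathbb{R}^{d})$, where Definition \ref{Def_convolution} suffices, and then extending by continuity while checking that the extension coincides with $K \star x$ in the sense of \eqref{extends-convolution}. This density-and-continuity step, together with verifying that the abstract $K$-functional quasinorm on the interpolation couple $(L^{r_{0}}(\mathbb{R}^{d}_{\theta}), L^{r_{1}}(\mathbb{R}^{d}_{\theta}))$ is equivalent to the Lorentz quasinorm $\|\cdot\|_{L^{r,\infty}(\mathbb{R}^{d}_{\theta})}$ in the present noncommutative setting, is the principal technical hurdle; once it is in place, real interpolation delivers the inequality in essentially one line.
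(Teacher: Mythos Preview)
Your interpolation argument is correct, but it is \emph{not} the route the paper takes, and it is also not the argument in \cite[Theorem 1.2.13]{G2008} that you claim to be mirroring. Both the paper and Grafakos proceed by the level-set decomposition $K = K\chi_{|K|\le M} + K\chi_{|K|>M}$: the bounded piece lies in $L^{p'}(\mathbb{R}^{d})$ and the large piece in $L^{1}(\mathbb{R}^{d})$, so the strong Young inequality of \cite{Mc} gives $\|K_{1}*x\|_{L^{\infty}(\mathbb{R}^{d}_{\theta})}\lesssim M^{(p'-q)/p'}$ and $\|K_{2}*x\|_{L^{p}(\mathbb{R}^{d}_{\theta})}\lesssim M^{1-q}$; one then optimizes $M$ against the level $\beta$ and uses the noncommutative Chebyshev inequality to control $d(\beta;K*x)$ directly. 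Your approach instead freezes $x$, applies the strong Young inequality at two nearby pairs $(q_{0},r_{0})$ and $(q_{1},r_{1})$, and invokes the real-interpolation identification $(L^{r_{0}}(\mathbb{R}^{d}_{\theta}),L^{r_{1}}(\mathbb{R}^{d}_{\theta}))_{\alpha,\infty}=L^{r,\infty}(\mathbb{R}^{d}_{\theta})$. This is legitimate and in fact closer in spirit to how the paper proves the \emph{next} result, Theorem~\ref{Hardy–Littlewood–Sobolev-inequality}, where interpolation (via \cite[Corollary 7.8.3]{DPS}) upgrades the weak-type bound to a strong one. The trade-off: the splitting proof is self-contained and avoids any appeal to the noncommutative interpolation machinery, while yours is shorter once that machinery is granted and makes the two theorems into a single interpolation package.

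One caution on your density step: $L^{q,\infty}(\mathbb{R}^{d})\cap L^{1}(\mathbb{R}^{d})$ is \emph{not} dense in $L^{q,\infty}(\mathbb{R}^{d})$ for the quasinorm topology (e.g.\ $|t|^{-d/q}$ cannot be approximated), so you cannot simply extend by continuity. This is not fatal for your argument, because real interpolation already produces a bounded operator on all of $(L^{q_{0}},L^{q_{1}})_{\alpha,\infty}=L^{q,\infty}$ without any density hypothesis; but you should drop the density claim and instead observe that for any $K\in L^{q,\infty}$ the decomposition $K=K\chi_{|K|\le M}+K\chi_{|K|>M}$ places the pieces in $L^{q_{1}}$ and $L^{q_{0}}$ respectively, so $T_{x}K$ is unambiguously defined as an element of $L^{r_{0}}(\mathbb{R}^{d}_{\theta})+L^{r_{1}}(\mathbb{R}^{d}_{\theta})$ and the interpolation bound applies to it directly.
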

\begin{proof} Let $M$ be a positive real number to be chosen later. Set $K_1 = K\chi_{|K|\leq M}$ and $K_2 = K\chi_{|K|>M}.$ In view of \cite[Exercise 1.1.10(a), p. 23]{G2008} we obtain 
\begin{eqnarray*}  
 d_{K_1}(\zeta)=     \left\{ \begin{array}{rcl}
         0 \;\;\;\;\;\;\;\;\;\;\;\;& \mbox{if}
         & \zeta\geq M, \\ d_{K}(\zeta)-d_{K}(M) & \mbox{if} & \zeta< M,
                \end{array}\right.    
\end{eqnarray*}
and 
\begin{eqnarray*}
 d_{K_2}(\zeta)=     \left\{ \begin{array}{rcl}
          d_{K}(\zeta)  & \mbox{if}
         & \zeta>M, \\ d_{K}(M)& \mbox{if} & \zeta\leq M.
\end{array}\right.    
\end{eqnarray*}
Here, $d_f(\cdot)$ is the the classical distribution function of $f$ (see, \cite[Definition 1.1.1. p. 2]{G2008}).
Then by  \cite[formulas (1.2.20) and (1.2.20). p. 24]{G2008}, we obtain 
\begin{eqnarray}\label{distribution-function-3}
\int\limits_{\mathbb{R}^{d}} |K_1(\xi)|^{s_1}d\xi \leq \frac{s_1}{s_1-q}M^{s_1-q}\|K\|^q_{L^{q,\infty}(\mathbb{R}^{d})} -M^{s_1}d_{K}(M), 
\end{eqnarray}
for $1\leq q<s_1<\infty,$ and 
\begin{eqnarray}\label{distribution-function-4}
\int\limits_{\mathbb{R}^{d}} |K_2(\xi)|^{s_2}d\xi  \leq \frac{q}{q-s_2}M^{s_2-q}\|K\|^q_{L^{q,\infty}(\mathbb{R}^{d})}, 
\end{eqnarray}
for $1\leq s_2<q<\infty.$

Since $1/q= 1/p'+ 1/r,$ we find $1 < q< p'.$ Now, let us choose $s_1=p'$ and $s_2=1.$
Hence, it follows from the noncommutative Young inequality \cite[Theorem 3.2, p. 16]{Mc}      and \eqref{distribution-function-3} that
\begin{eqnarray}\label{estimate-s1}
\|K_1* x\|_{L^{\infty}(\mathbb{R}^d_{\theta})}
&\leq& \|K_1\|_{L^{p'}(\mathbb{R}^{d})}\|x\|_{L^p(\mathbb{R}^{d}_{\theta})}\nonumber\\
&\overset{\eqref{distribution-function-3}}{\leq}& \left(\frac{p'}{p'-q}M^{p'-q}\|K\|^q_{L^{q,\infty}(\mathbb{R}^{d})}\right)^{\frac{1}{p'}} \|x\|_{L^p(\mathbb{R}^{d}_{\theta})}.
\end{eqnarray}
If $p' < \infty,$ then we choose $M$ such that the right-hand side of \eqref{estimate-s1} is equal to $\frac{\beta}{2},$ that is
$$ 
M = (\beta^{p'}2^{-p'}rq^{-1}\|x\|^{-p'}_{L^p(\mathbb{R}^{d}_{\theta})}
\|K\|^{-q}_{L^{q,\infty}(\mathbb{R}^{d})})^{\frac{1}{ p'-q}}
$$
and $M=\frac{\beta}{2\|x\|_{L^1(\mathbb{R}^{d}_{\theta})}}$ if $p'=\infty$ for $\beta>0.$  In this case, since  $\| K_1*_{\theta} x\|_{L^{\infty}(\mathbb{R}^d_{\theta})}\leq \frac{\beta}{2},$ we have that $d(\frac{\beta}{2};K_1*_\theta{x})= 0.$  Here, we used the fact that, if $x\in L^0(\mathbb{R}^d_{\theta})$ and  $\beta\geq0,$ then it is clear that $e^{|x|}(\frac{\beta}{2},\infty)=0 $ if and only if $x\in L^{\infty}(\mathbb{R}^d_{\theta})$ and $\|x\|_{L^{\infty}(\mathbb{R}^d_{\theta})}\leq \frac{\beta}{2}$ (see the proof of \cite[Lemma 3.2.3, p. 125]{DPS}).

Let $s_2 = 1.$ Then it follows from the noncommutative Young inequality \cite[Theorem 3.2, p. 16]{Mc} with $r=p$ and \eqref{distribution-function-4} that 
\begin{eqnarray}\label{estimate-s2}
\|K_2* {x}\|_{L^p(\mathbb{R}^d_{\theta})}
\leq \|K_2 \|_{L^1(\mathbb{R}^{d})}\|x\|_{L^p(\mathbb{R}^d_{\theta})}\ \overset{\eqref{distribution-function-4}}{=}\frac{q}{q-1}M^{1-q}\|K\|^q_{L^{q,\infty}(\mathbb{R}^{d})}\|x\|_{L^p(\mathbb{R}^d_{\theta})}.
\end{eqnarray}
Therefore, for the value of $M$ chosen, applying \eqref{estimate-s2} and the noncommutative Chebyshev inequality  \cite[Lemma 4.7, p. 4094]{D}, we have 
\begin{eqnarray*} 
d(\beta;K*{x}) &\leq& d(\frac{\beta}{2};K_2* {x}) \\
&\leq& \left(\frac{2}{\beta} \|K_2* {x}\|_{L^p(\mathbb{R}^d_{\theta})} \right)^{p}\\
&\leq&C^r_{p,q,r}\beta^{-r}\|K\|^{r}_{L^{q,\infty}(\mathbb{R}^{d})}\|x\|^{r}_{L^p(\mathbb{R}^d_{\theta})}.
\end{eqnarray*}
In other words, we obtain 
\begin{eqnarray*} 
\beta^{r}d(\beta;K*{x}) \leq C^r_{p,q,r}\|K\|^{r}_{L^{q,\infty}(\mathbb{R}^{d})}\|x\|^{r}_{L^p(\mathbb{R}^d_{\theta})}.
\end{eqnarray*}
Taking supremum over $\beta>0$ from both sides of the previous inequality we complete the proof.
\end{proof}

Next, we establish a noncommutative version of \cite[Theorem 1.4.25.  p. 73]{G2008} for weak-$L^p$ spaces. 

\begin{thm}\label{Hardy–Littlewood–Sobolev-inequality}(Hardy–Littlewood–Sobolev inequality). Let $1<p, q, r<\infty$ satisfying  
\begin{eqnarray}\label{relation-parameters}
1+\frac{1}{r}= \frac{1}{p}+\frac{1}{q}. 
\end{eqnarray}  Then, there exists $C_{p,q,r}> 0$  such that
\begin{eqnarray}\label{Hardy–Littlewood–Sobolev-inequality-1}
\|K * {x}\|_{L^r(\mathbb{R}^{d}_{\theta})} \leq C_{p,q,r} \|K \|_{L^{q,\infty}(\mathbb{R}^{d})} \|x\|_{L^p(\mathbb{R}^{d}_{\theta})}\end{eqnarray}
for $K \in L^{q,\infty}(\mathbb{R}^{d}),$ and $x\in L^p(\mathbb{R}^{d}_{\theta}).$
\end{thm}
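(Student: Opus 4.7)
The plan is to deduce the strong-type bound from the Young inequality for weak type spaces established above, via a noncommutative Marcinkiewicz interpolation argument; this mirrors the classical derivation of the Hardy--Littlewood--Sobolev inequality (cf.\ \cite[Theorem 1.4.25]{G2008}). Fix $K\in L^{q,\infty}(\mathbb{R}^d)$ and consider the linear operator $T_K\colon x\mapsto K\ast x$. The relation $1+1/r=1/p+1/q$ together with $1<r<\infty$ forces $1/p+1/q>1$, hence $p<q'$ where $q'=q/(q-1)$; in particular $p<r$. Choose $p_0,p_1$ with $1<p_0<p<p_1<q'$, and define $r_0,r_1$ by $1+1/r_i=1/p_i+1/q$, so that $1<r_0<r<r_1<\infty$. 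Since $1/r_i-1/p_i=1/q-1$ is independent of $i$, there is a unique $\eta\in(0,1)$ with
$$
\frac{1}{p}=\frac{1-\eta}{p_0}+\frac{\eta}{p_1}, \qquad \frac{1}{r}=\frac{1-\eta}{r_0}+\frac{\eta}{r_1}.
$$

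Applying the weak-type inequality at each endpoint pair yields, for $i=0,1$,
$$
\|T_K x\|_{L^{r_i,\infty}(\mathbb{R}^d_\theta)} \leq C_i\,\|K\|_{L^{q,\infty}(\mathbb{R}^d)}\,\|x\|_{L^{p_i}(\mathbb{R}^d_\theta)}.
$$
Invoking the noncommutative Marcinkiewicz real interpolation theorem for Lorentz spaces over the semifinite von Neumann algebra $(L^\infty(\mathbb{R}^d_\theta),\tau_\theta)$ (see, e.g., \cite{DPS,LSZ,PXu}), $T_K$ extends to a bounded map $L^p(\mathbb{R}^d_\theta)=L^{p,p}(\mathbb{R}^d_\theta)\to L^{r,p}(\mathbb{R}^d_\theta)$, with norm dominated by $\|K\|_{L^{q,\infty}(\mathbb{R}^d)}$. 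Since $p<r$, the continuous inclusion $L^{r,p}(\mathbb{R}^d_\theta)\hookrightarrow L^{r,r}(\mathbb{R}^d_\theta)=L^r(\mathbb{R}^d_\theta)$ produces the desired bound.

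The main obstacle I anticipate lies in invoking the correct form of real interpolation in the noncommutative setting: namely, that two weak-type endpoint bounds yield a strong-type conclusion at an interior exponent, with Lorentz quasi-norms defined via the generalised singular value function as in Subsection 2.3. This is standard in the theory of rearrangement invariant spaces over semifinite von Neumann algebras, and the Lorentz spaces used here fit squarely within that framework. A self-contained alternative is to repeat the level-set decomposition from the proof of the weak-type Young inequality, writing $K=K_1^\alpha+K_2^\alpha$ with a threshold $M(\alpha)$ chosen to balance the two resulting estimates, obtaining a pointwise bound on $d(\alpha;K\ast x)$, and then integrating $r\alpha^{r-1}\,d\alpha$ against it to compute $\|K\ast x\|_{L^r(\mathbb{R}^d_\theta)}^r$ directly; this is more computational but avoids any external interpolation machinery.
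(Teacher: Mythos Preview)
Your argument is correct and follows essentially the same route as the paper: apply the weak-type Young inequality (Theorem~\ref{(Young-inequality-weak-type-spaces}) at two endpoint pairs $(p_i,r_i)$ bracketing $(p,r)$ with the same relation $1+1/r_i=1/p_i+1/q$, and then invoke the noncommutative Marcinkiewicz interpolation theorem (the paper cites \cite[Corollary 7.8.3]{DPS}) to pass to the strong-type bound. Your explicit verification that $p<q'$ and hence $p<r$, together with the intermediate step through $L^{r,p}(\mathbb{R}^d_\theta)\hookrightarrow L^r(\mathbb{R}^d_\theta)$, makes the interpolation step more transparent than the paper's terse citation.
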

\begin{proof}  Let $1< r,p< \infty$ and let
$p_0 < p < p_1,$   $r_0 < r < r_1,$ and $0<\gamma<1$ such that  
$$
\frac{1}{r}=\frac{1-\gamma}{r_1}+\frac{\gamma}{r_0},\quad \frac{1}{p}=\frac{1-\gamma}{p_1}+\frac{\gamma}{p_0}. 
$$
Define $A(x) := K* x.$ By Theorem \ref{(Young-inequality-weak-type-spaces}, the operator $A$ can be extended to a bounded operator from $L^{p_0}(\mathbb{R}^{d}_{\theta})$ to $L^{r_0,\infty}(\mathbb{R}^{d}_{\theta})$ and from $L^{p_1}(\mathbb{R}^{d}_{\theta})$ to $L^{r_1,\infty}(\mathbb{R}^{d}_{\theta})$ 
with constants 
$$ 
M_0=M_1=\|K\|_{L^{p,\infty}(\mathbb{R}^{d})}, 
$$ 
respectively.
Then, it follows from Corollary 7.8.3 in \cite{DPS} {\color{red} that $T$ is bounded from $L^p(\mathbb{R}^{d}_{\theta})$ to  $L^r(\mathbb{R}^{d}_{\theta}).$}
\end{proof}
{\color{red} Let $s>0.$} Throughout this paper we understand the operator $(-\Delta_{\theta})^{\frac{s}{2}}$ as a Fourier multiplier with the symbol $\sigma(\xi):=|\xi|^s,$ $\xi\in\mathbb{R}^d,$ which is defined by
\begin{equation}\label{Def_NL}
(-\Delta_{\theta})^{\frac{s}{2}} x=\lambda_{\theta}(\sigma{f}), \quad x\in \mathcal{S}(\mathbb{R}^d_{\theta}).
\end{equation}
{\color{red}  Furthermore,  for  $x\in\mathcal{S}'(\mathbb{R}_{\theta}^d)$ and  $y\in\mathcal{S}(\mathbb{R}_{\theta}^d),$ we define  the operator  $(-\Delta_{\theta})^{\frac{s}{2}}$ on $\mathcal{S}'(\mathbb{R}_{\theta}^d)$  in the following way
$$
\langle (-\Delta_{\theta})^{\frac{s}{2}}{x}, y\rangle =\langle x, (-\Delta_{\theta})^{\frac{s}{2}} {y}\rangle.   
$$}
\begin{lem}\label{laplacian-convolution} Let  $s>0.$ Then for any $K\in \mathcal{S}'(\mathbb{R}^d)$ and $x \in \mathcal{S}(\mathbb{R}_{\theta}^d)$ we have  
$$
(-\Delta_{\theta})^{\frac{s}{2}} (K \star x)=K \star (-\Delta_{\theta})^{\frac{s}{2}}x=(-\Delta)^{\frac{s}{2}}K \star x,
$$    
where $\Delta$ is the classical Laplacian. 
\end{lem}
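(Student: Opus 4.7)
My plan is to unfold each of the three convolutions using the duality definition \eqref{extends-convolution}, together with the dual definition of $(-\Delta_\theta)^{s/2}$ on $\mathcal{S}'(\mathbb{R}^d_\theta)$ and the analogous classical duality for $(-\Delta)^{s/2}$ on $\mathcal{S}'(\mathbb{R}^d)$. After pairing against an arbitrary test element $y \in \mathcal{S}(\mathbb{R}^d_\theta)$, the entire statement will reduce to a single scalar identity among classical Schwartz-type functions on $\mathbb{R}^d$.

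Concretely, for any $y \in \mathcal{S}(\mathbb{R}^d_\theta)$ I would compute, using only the definitions,
\begin{align*}
\langle (-\Delta_\theta)^{s/2}(K \star x), y\rangle &= \langle K \star x, (-\Delta_\theta)^{s/2} y\rangle = \langle K, x \diamond ((-\Delta_\theta)^{s/2} y)\rangle, \\
\langle K \star ((-\Delta_\theta)^{s/2} x), y\rangle &= \langle K, ((-\Delta_\theta)^{s/2} x) \diamond y\rangle, \\
\langle ((-\Delta)^{s/2} K) \star x, y\rangle &= \langle (-\Delta)^{s/2} K, x \diamond y\rangle = \langle K, (-\Delta)^{s/2}(x \diamond y)\rangle.
\end{align*}
Thus the theorem reduces to establishing the scalar identity
$$x \diamond ((-\Delta_\theta)^{s/2} y) \;=\; ((-\Delta_\theta)^{s/2} x) \diamond y \;=\; (-\Delta)^{s/2}(x \diamond y)$$
for $x, y \in \mathcal{S}(\mathbb{R}^d_\theta)$.

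To prove this scalar identity I would write $x = \lambda_\theta(f)$ and $y = \lambda_\theta(g)$ with $f, g \in \mathcal{S}(\mathbb{R}^d)$. Proposition \ref{traslation-function} gives the closed-form expression $x \diamond y = \widehat{\tilde{f} g}$, while the Fourier-multiplier definition \eqref{Def_NL} yields $(-\Delta_\theta)^{s/2} x = \lambda_\theta(|\cdot|^s f)$ and similarly for $y$. Substituting these formulas, and using the elementary reflection identity that the reflection of $|\xi|^s f(\xi)$ equals $|\eta|^s \tilde{f}(\eta)$, together with the classical Fourier-multiplier formula $(-\Delta)^{s/2}\widehat{h} = \widehat{|\cdot|^s h}$ (an immediate consequence of differentiating under the Fourier integral), all three expressions collapse to the single function $\widehat{|\cdot|^s \tilde{f} g}$, which completes the argument.

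The main technical subtlety is that for $s > 0$ not an even integer the weight $|\xi|^s$ fails to be smooth at the origin, so $|\xi|^s f(\xi)$ is not in general a Schwartz function. Consequently $(-\Delta_\theta)^{s/2} y$ strictly escapes $\mathcal{S}(\mathbb{R}^d_\theta)$, and one must verify that each duality pairing above remains meaningful. I would handle this either by enlarging the test space to accommodate symbols that are smooth away from zero with polynomially controlled derivatives, or by a density argument reducing to $g$'s vanishing to high order near the origin, in which case $|\cdot|^s g$ is itself Schwartz and the algebraic computation above is unambiguous.
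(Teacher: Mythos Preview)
Your proposal is correct and follows essentially the same route as the paper: both reduce the lemma, via the duality definitions \eqref{extends-convolution} and \eqref{extension-lapasian}, to the scalar identity $((-\Delta_\theta)^{s/2}x)\diamond y = x\diamond((-\Delta_\theta)^{s/2}y) = (-\Delta)^{s/2}(x\diamond y)$, and then verify this by writing $x=\lambda_\theta(f)$, $y=\lambda_\theta(g)$ and using the explicit formula $x\diamond y=\widehat{\tilde f g}$ together with \eqref{Def_NL} to show each expression equals $\widehat{|\cdot|^s\tilde f g}$. Your remark about the non-smoothness of $|\xi|^s$ at the origin (so that $(-\Delta_\theta)^{s/2}y$ need not lie in $\mathcal{S}(\mathbb{R}^d_\theta)$) is a genuine technical point that the paper's proof passes over silently; your proposed fixes (enlarging the test class, or density with $g$ vanishing near $0$) are reasonable ways to make the argument rigorous.
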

\begin{proof} {\color{red} Let $x,y\in \mathcal{S}(\mathbb{R}_{\theta}^d).$  Then, it follows from \eqref{property-translation}, \eqref{Notation_1} and \eqref{Def_NL} that 
\begin{equation}\label{calculation_3}
(-\Delta_{\theta})^{\frac{s}{2}}x\cdot T_{-\eta}(y)\overset{\eqref{property-translation}\eqref{Def_NL}}{=}\lambda_{\theta}(\sigma{f})\lambda_{\theta}(e^{-i(\eta,\cdot)}g) 
\overset{\eqref{Notation_1}}{=}\lambda_{\theta}((\sigma{f})\ast_{\theta}(e^{-i(\eta,\cdot)}g))  \end{equation} 
and 
\begin{equation}\label{calculation_4}
x\cdot T_{-\eta}((-\Delta_{\theta})^{\frac{s}{2}}y)\overset{\eqref{property-translation}\eqref{Def_NL}}{=}\lambda_{\theta}(f)\lambda_{\theta}(e^{-i(\eta,\cdot)}(\sigma{g})) \overset{\eqref{Notation_1}}{=}\lambda_{\theta}(f\ast_{\theta}(e^{-i(\eta,\cdot)}(\sigma{g}))).
\end{equation}
By Definition \ref{new_conv_1} and using \eqref{calculation_3}-\eqref{calculation_4}, it is not hard to verify that 
\begin{equation}\label{calculation_5}
((-\Delta_{\theta})^{\frac{s}{2}}x\diamond{y})(\eta)=\tau_{\theta}((-\Delta_{\theta})^{\frac{s}{2}}xT_{-\eta}(y))  \overset{\eqref{calculation_3}}{=}  \int\limits_{\mathbb{R}^d}(\sigma\tilde{f}g)(\xi)e^{-i(\eta,\xi)}d\xi =(\widehat{\sigma\tilde{f}g})(\eta)  
\end{equation}
and 
\begin{equation}\label{calculation_6} 
(x\diamond{(-\Delta_{\theta})^{\frac{s}{2}}y})(\eta)=\tau_{\theta}(xT_{-\eta}(-\Delta_{\theta})^{\frac{s}{2}}y) \overset{\eqref{calculation_4}}{=}\int\limits_{\mathbb{R}^d}(\sigma\tilde{f}g)(\xi)e^{-i(\eta,\xi)}d\xi =(\widehat{\sigma\tilde{f}g)}(\eta),  
\end{equation}
for $\eta\in\mathbb{R}^d.$   Then, the above calculations \eqref{calculation_5} and \eqref{calculation_6}  show immediately 
\begin{equation}\label{calculation_61}
((-\Delta_{\theta})^{\frac{s}{2}}x\diamond{y})(\eta)=(x\diamond(-\Delta_{\theta})^{\frac{s}{2}}y)(\eta)\quad \text{for all}\quad x,y\in \mathcal{S}(\mathbb{R}_{\theta}^d),\eta\in\mathbb{R}^d.
\end{equation}
Thus, 
\begin{eqnarray*}  
\langle K\star (-\Delta_{\theta})^{\frac{s}{2}}x, y\rangle &\overset{\eqref{extends-convolution}}{=}&
 \langle K, (-\Delta_{\theta})^{\frac{s}{2}}x\diamond{y}\rangle\\
 &\overset{\eqref{calculation_61}}{=}& \langle K, x\diamond(-\Delta_{\theta})^{\frac{s}{2}}y\rangle \\
 &\overset{\eqref{extends-convolution}}{=}&\langle K\star x, (-\Delta_{\theta})^{\frac{s}{2}}y\rangle\\
 &\overset{\eqref{extension-lapasian}}{=}&\langle (-\Delta_{\theta})^{\frac{s}{2}}(K\star x), y\rangle\quad \text{for all} \quad y\in \mathcal{S}(\mathbb{R}_{\theta}^d).
\end{eqnarray*}
In other words, we have 
\begin{eqnarray} \label{calculation_7}
(-\Delta_{\theta})^{\frac{s}{2}} (K \star x)=K\star (-\Delta_{\theta})^{\frac{s}{2}}x ,\quad  x\in \mathcal{S}(\mathbb{R}_{\theta}^d). 
\end{eqnarray}
Let us now show that 
\begin{eqnarray}\label{calculation_8}
K \star (-\Delta_{\theta})^{\frac{s}{2}}x=(-\Delta)^{\frac{s}{2}}K \star x,\quad   x\in \mathcal{S}(\mathbb{R}_{\theta}^d). 
\end{eqnarray}
Indeed,  using \eqref{calculation_2} it is now easy to see that
\begin{eqnarray}\label{calculation_9}
(-\Delta)^{\frac{s}{2}}(x\diamond{y})(\eta)&\overset{\eqref{calculation_2}}{=}&\int\limits_{\mathbb{R}^d}\tilde{f}(\xi)\left((-\Delta)^{\frac{s}{2}}e^{-i(\eta,\xi)}\right)g(\xi)d\xi\nonumber\\
&=&\int\limits_{\mathbb{R}^d}|\xi|^{\frac{s}{2}}\tilde{f}(\xi)e^{-i(\eta,\xi)}g(\xi)d\xi\\
&=&(\widehat{\sigma\tilde{f}g})(\eta),\quad \eta\in\mathbb{R}^d.\nonumber
\end{eqnarray}
Thus, combining this calculation \eqref{calculation_9} with   \eqref{extends-convolution} and \eqref{calculation_5}, we obtain that
\begin{eqnarray*}  
\langle K\star (-\Delta_{\theta})^{\frac{s}{2}}x, y\rangle &\overset{\eqref{extends-convolution}}{=}&
 \langle K, (-\Delta_{\theta})^{\frac{s}{2}}x\diamond{y}\rangle \overset{\eqref{calculation_5}}{=}  \langle K, \widehat{\sigma\tilde{f}g}\rangle\\&\overset{\eqref{calculation_9}}{=}&\langle K,  (-\Delta)^{\frac{s}{2}}(x\diamond{y})\rangle
=\langle (-\Delta )^{\frac{s}{2}}K, x\diamond{y}\rangle\overset{\eqref{extends-convolution}}{=}\langle (-\Delta)^{\frac{s}{2}}K\star x, y\rangle. 
\end{eqnarray*}
for all $y\in \mathcal{S}(\mathbb{R}_{\theta}^d),$
which gives \eqref{calculation_8}.
Then the combination of \eqref{calculation_7} and \eqref{calculation_8} gives the desired result.}
\end{proof} 

\begin{rem}\label{Remark-convolution} Define the function on $\mathbb{R}^d$ by

\begin{eqnarray}\label{locally-integrable-function}
K_s(t):= \begin{cases}
    C_{d,s} |t|^{s-d},\quad t\in\mathbb{R}^d, \,\ \text{for}\,\ s \in \mathbb{R}_+\setminus\{d\},\\
    -C_{d}\log(|t|), \,\ \text{for}\,\ s = d,
\end{cases}
\end{eqnarray}
with constants  
$$
C_{d,s}=(2\pi)^s\frac{\pi^{-\frac{s+d}{2}}}{\pi^{s/2}}\frac{\Gamma(\frac{d+s}{2})}{\Gamma(-\frac{s}{2})} 
$$
and 
$$
C_{d}=\frac{2\pi^{d/2}}{\Gamma(d/2)},
$$
respectively, (see, \cite[Theorem 2.4.6. p. 138]{G2008}). 
In the case $s \in (0, d),$ it is well-known that this function gives the fundamental solution of the fractional Laplacian (see, \cite[Proposition 2.4.4. p. 135]{G2008}), that is,  
$$
(-\Delta)^{\frac{s}{2}}K_s=\delta_0. 
$$ 
{\color{red} The reflection $\widetilde{K_s}$ of the  distribution $K_s$ defined as follows (see, \cite[Definition 2.3.11]{G2008}) 
\begin{eqnarray}\label{property-reflection-1}
\langle \widetilde{K_s}, \phi\rangle=\langle K_s, \widetilde{\phi}\rangle,\quad \phi\in\mathcal{S}(\mathbb{R}^d). 
\end{eqnarray}
Let $x,y\in \mathcal{S}(\mathbb{R}_{\theta}^d).$ Then, it follows from  \eqref{calculation_2} and  \eqref{extends-convolution}  that
\begin{eqnarray*} 
\langle (-\Delta)^{\frac{s}{2}}K_s\star x, y\rangle\overset{\eqref{extends-convolution}}{=}\langle (-\Delta)^{\frac{s}{2}}K_s, x\diamond{y}\rangle\overset{\eqref{calculation_2}}{=}\langle (-\Delta)^{\frac{s}{2}}K_s, \widehat{\tilde{f}g}\rangle=\langle \sigma\widehat{K}_s, \tilde{f}g\rangle. 
\end{eqnarray*}
Since   $\widetilde{\widehat{K_s}}=\widehat{\widetilde{K_s}}= \widehat{K_s}$ (see, \cite[Proposition 2.3.22.]{G2008}) we have 
\begin{eqnarray*} 
 \langle (-\Delta)^{\frac{s}{2}}K_s\star x, y\rangle&=&\langle \widetilde{\sigma\widehat{K}_s}, \tilde{f}g\rangle = 
 \langle \sigma\widehat{K}_s, \widetilde{\tilde{f}g}\rangle = \langle  \sigma\widehat{K_s},  {f}\tilde{g}\rangle =\langle  \sigma\widehat{K_s}{f},  \tilde{g}\rangle\\
 &\overset{\eqref{injectivity}}{=}&(\lambda_{\theta}(\sigma\widehat{ K_s}{f}), \lambda_{\theta}(g))  \overset{\eqref{FT_convolution}}{=}(K_s* (-\Delta_{\theta})^{\frac{s}{2}}x, y). 
\end{eqnarray*}
Thus,  
\begin{eqnarray*}   
\langle \delta_0\star x, y\rangle&=
 \langle (-\Delta)^{\frac{s}{2}}K_s\star x, y\rangle    =\langle K_s* (-\Delta_{\theta})^{\frac{s}{2}} x, y\rangle 
\end{eqnarray*}
for any $y\in \mathcal{S}(\mathbb{R}_{\theta}^d).$
In other words, we obtain
\begin{equation}\label{convol-connection}
\delta_0\star x= K_s*(-\Delta_{\theta})^{\frac{s}{2}}x. 
\end{equation}}
\end{rem}

\begin{thm}\label{Sobolev-type-inequalities}(Sobolev type inequalities) Let $s > 0$ and $1 < p <  q < \infty$ satisfy 
\begin{eqnarray}\label{Parameter-condation}
\frac{1}{p}-\frac{1}{q}=\frac{s}{d}.
\end{eqnarray}
{\color{red} There exists} a constant $C_{d,p,q}>0$ such that for any $x\in\mathcal{S}(\mathbb{R}_{\theta}^d)$ we have
\begin{eqnarray}\label{Sobolev-type-inequalities-1}
 \|x\|_{L^q(\mathbb{R}^d_{\theta
})} \leq C_{d,p,q}\|(-\Delta_{\theta})^{\frac{s}{2}} x\|_{L^p(\mathbb{R}^d_{\theta
})},\quad s\in(0,d).  
\end{eqnarray}
\end{thm}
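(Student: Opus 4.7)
The plan is to reduce the desired Sobolev inequality to an application of the Hardy--Littlewood--Sobolev inequality (Theorem \ref{Hardy–Littlewood–Sobolev-inequality}) via the Riesz potential representation developed in Remark \ref{Remark-convolution}.

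First, I would observe that for $x\in\mathcal{S}(\mathbb{R}_\theta^d)$, the identity \eqref{delta} together with \eqref{convol-connection} gives the fundamental representation
\begin{equation*}
x \;=\; \delta_0 \star x \;=\; K_s * (-\Delta_\theta)^{s/2}x,
\end{equation*}
where $K_s(t)=C_{d,s}|t|^{s-d}$ is the Riesz kernel. This reduces the estimation of $\|x\|_{L^q(\mathbb{R}^d_\theta)}$ to the estimation of a convolution of $(-\Delta_\theta)^{s/2}x\in L^p(\mathbb{R}^d_\theta)$ against a fixed classical kernel.

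Second, I would check that the Riesz kernel $K_s$ lies in the weak Lorentz space $L^{d/(d-s),\infty}(\mathbb{R}^d)$. This is a routine computation from the definition \eqref{weak-Lp-commut}: the distribution function of $|t|^{s-d}$ on $\mathbb{R}^d$ (for $s\in(0,d)$) satisfies $d(\lambda,K_s)=C\lambda^{-d/(d-s)}$, so that $\|K_s\|_{L^{d/(d-s),\infty}(\mathbb{R}^d)}$ is finite with an explicit constant depending only on $d$ and $s$.

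Third, I would apply Theorem \ref{Hardy–Littlewood–Sobolev-inequality} with kernel exponent $q_0:=d/(d-s)$, source exponent $p$ and target exponent $q$. The admissibility condition \eqref{relation-parameters} becomes
\begin{equation*}
1+\frac{1}{q}=\frac{1}{p}+\frac{d-s}{d},
\end{equation*}
which, after simplification, is exactly the hypothesis \eqref{Parameter-condation} that $\frac{1}{p}-\frac{1}{q}=\frac{s}{d}$. Thus Theorem \ref{Hardy–Littlewood–Sobolev-inequality} yields
\begin{equation*}
\|x\|_{L^q(\mathbb{R}^d_\theta)}=\|K_s*(-\Delta_\theta)^{s/2}x\|_{L^q(\mathbb{R}^d_\theta)}\leq C_{p,q,d/(d-s)}\|K_s\|_{L^{d/(d-s),\infty}(\mathbb{R}^d)}\|(-\Delta_\theta)^{s/2}x\|_{L^p(\mathbb{R}^d_\theta)},
\end{equation*}
and absorbing the kernel norm into a single constant $C_{d,p,q}>0$ gives \eqref{Sobolev-type-inequalities-1}.

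The main (minor) obstacle is purely bookkeeping: making sure that Definitions \ref{Def_convolution} and \ref{Def_convolution2} of convolution agree on the pair $(K_s,(-\Delta_\theta)^{s/2}x)$, since $K_s$ is only a tempered distribution, not an $L^1$ function, while $(-\Delta_\theta)^{s/2}x\in L^p(\mathbb{R}^d_\theta)$ for a Schwartz element $x$. This is resolved by the remark following Definition \ref{Def_convolution2} together with a standard truncation/density argument (split $K_s=K_s\chi_{|t|\leq M}+K_s\chi_{|t|>M}$ as already used in the proof of Theorem \ref{(Young-inequality-weak-type-spaces}) to extend the $L^p\to L^{r,\infty}$ bounds to the weak-$L^{q_0}$ kernel, after which the Hardy--Littlewood--Sobolev inequality upgrades this to a strong $L^p\to L^q$ bound. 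No new analytic input beyond what is already in Theorems \ref{(Young-inequality-weak-type-spaces} and \ref{Hardy–Littlewood–Sobolev-inequality} is required.
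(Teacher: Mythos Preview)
Your proposal is correct and follows essentially the same route as the paper: represent $x = \delta_0 \star x = K_s * (-\Delta_\theta)^{s/2}x$ via \eqref{delta} and \eqref{convol-connection}, note that $K_s \in L^{d/(d-s),\infty}(\mathbb{R}^d)$, verify that \eqref{Parameter-condation} matches the admissibility condition of Theorem~\ref{Hardy–Littlewood–Sobolev-inequality}, and apply that theorem. Your additional paragraph on reconciling Definitions~\ref{Def_convolution} and~\ref{Def_convolution2} is extra care that the paper leaves implicit.
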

\begin{proof} Let $1<q<\infty.$  Then, applying formula \eqref{delta} and \eqref{convol-connection} in Remark \ref{Remark-convolution} we have
$$
\|x\|_{L^q(\mathbb{R}_{\theta}^d)}\overset{\eqref{delta}}{=}\|\delta_0 \star x\|_{L^q(\mathbb{R}_{\theta}^d)} \overset{\eqref{convol-connection}}{=}\| K_s* (-\Delta_{\theta})^{\frac{s}{2}}x\|_{L^q(\mathbb{R}_{\theta}^d)}. 
$$
Since $K_s \in L^{\frac{d}{d-s},\infty}(\mathbb{R}^d)$
for  $s \in (0, d)$ (see, \cite[p. 2]{VSC}) and $1 + \frac{1}{q}= \frac{d-s}{d}+\frac{1}{p}$ with $p\leq q,$ applying inequality \eqref{Hardy–Littlewood–Sobolev-inequality-1} with respect to $r=q,$ we obtain
$$
\|K_s* (-\Delta_{\theta})^{\frac{s}{2}} x\|_{L^q(\mathbb{R}_{\theta}^d)}\leq C \|(-\Delta_{\theta})^{\frac{s}{2}} x\|_{L^p(\mathbb{R}_{\theta}^d)}, 
$$
which completes the proof. 
\end{proof}


 As a particular case of inequality \eqref{Sobolev-type-inequalities-1}, when $s = 1$ and $1 < p \leq  q < \infty$ such that $\frac{1}{d}= \frac{1}{q}-\frac{1}{p},$  we  establish the following Sobolev inequality.
\begin{cor}\label{Sobolev-inequality} (Sobolev inequality). Let  $1 < p <  q < \infty$  with  $\frac{1}{d}=\frac{1}{p}-\frac{1}{q}.$ {\color{red} Then there exists} a constant $C_{d,p,q}>0$ such that for any $x\in\mathcal{S}(\mathbb{R}_{\theta}^d)$ we have
\begin{eqnarray}\label{Sobolev-inequality-1}
\|x\|_{L^q(\mathbb{R}_{\theta}^d)}\leq C_{d,p,q} \|\nabla_{\theta}x\|_{L^p(\mathbb{R}_{\theta}^d)}. 
\end{eqnarray}
\end{cor}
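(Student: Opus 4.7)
The plan is to deduce this as a direct consequence of Theorem \ref{Sobolev-type-inequalities} with $s = 1$, followed by a comparison of $\|(-\Delta_\theta)^{1/2} x\|_{L^p(\mathbb{R}^d_\theta)}$ with $\|\nabla_\theta x\|_{L^p(\mathbb{R}^d_\theta)}$ via $L^p$-boundedness of noncommutative Riesz transforms.

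First, I would apply Theorem \ref{Sobolev-type-inequalities} with $s = 1$. The hypothesis $\tfrac{1}{p} - \tfrac{1}{q} = \tfrac{1}{d}$ is exactly condition \eqref{Parameter-condation} for $s = 1$, and $s=1 \in (0, d)$ provided $d \geq 2$, which is the standard range for a Sobolev inequality of this form. This yields
\begin{equation*}
\|x\|_{L^q(\mathbb{R}^d_\theta)} \leq C \,\|(-\Delta_\theta)^{1/2} x\|_{L^p(\mathbb{R}^d_\theta)}
\end{equation*}
for all $x \in \mathcal{S}(\mathbb{R}^d_\theta)$.

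Next, I would rewrite $(-\Delta_\theta)^{1/2} x$ in terms of the derivatives $\partial_j^\theta x$. Writing $x = \lambda_\theta(f)$ with $f \in \mathcal{S}(\mathbb{R}^d)$, formulas \eqref{operator-derivation} and \eqref{Def_NL} give $\partial_j^\theta x = \lambda_\theta(i t_j f)$ and $(-\Delta_\theta)^{1/2} x = \lambda_\theta(|t| f)$. The scalar identity $|t| = \sum_{j=1}^d (-i t_j / |t|)(i t_j)$ on $\mathbb{R}^d \setminus \{0\}$ therefore translates into the operator identity
\begin{equation*}
(-\Delta_\theta)^{1/2} x = \sum_{j=1}^d R_j^\theta(\partial_j^\theta x),
\end{equation*}
where $R_j^\theta$ is the noncommutative Riesz transform on $\mathbb{R}^d_\theta$, i.e.\ the Fourier multiplier with symbol $-i t_j / |t|$. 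Assuming the $L^p$-boundedness of each $R_j^\theta$ for $1 < p < \infty$, the triangle inequality gives
\begin{equation*}
\|(-\Delta_\theta)^{1/2} x\|_{L^p(\mathbb{R}^d_\theta)} \leq \sum_{j=1}^d \|R_j^\theta \partial_j^\theta x\|_{L^p(\mathbb{R}^d_\theta)} \leq C' \|\nabla_\theta x\|_{L^p(\mathbb{R}^d_\theta)},
\end{equation*}
and combining with the first step yields the desired inequality.

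The main obstacle is the $L^p$-boundedness of the Riesz transforms $R_j^\theta$ on $L^p(\mathbb{R}^d_\theta)$ for $p \neq 2$. At $p = 2$ it is immediate from the Plancherel identity \eqref{Plancherel} since the symbol $-it_j/|t|$ is bounded. For general $p \in (1, \infty)$ it is a genuine noncommutative Calder\'on-Zygmund statement; one can either appeal to Mihlin-type Fourier multiplier theorems on $\mathbb{R}^d_\theta$ developed in works such as \cite{GJM, MSX}, or alternatively exploit the tensor decomposition \eqref{direct-sum} together with vector-valued Calder\'on-Zygmund theory to reduce to the classical scalar result.
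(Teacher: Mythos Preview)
Your first step is exactly the paper's entire argument: the text preceding the corollary reads ``As a particular case of inequality \eqref{Sobolev-type-inequalities-1}, when $s=1$\ldots'', and no further justification is given. The paper never distinguishes $\|(-\Delta_\theta)^{1/2}x\|_{L^p(\mathbb{R}^d_\theta)}$ from $\|\nabla_\theta x\|_{L^p(\mathbb{R}^d_\theta)}$; it treats the corollary as a notational restatement of Theorem~\ref{Sobolev-type-inequalities} at $s=1$.

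Your second step --- invoking $L^p$-boundedness of the noncommutative Riesz transforms $R_j^\theta$ to pass from $\|(-\Delta_\theta)^{1/2}x\|_{L^p}$ to a norm built from the individual $\partial_j^\theta x$ --- is a genuine additional argument that the paper simply does not make. It is correct, and the multiplier/transference results you point to (e.g.\ \cite{GJP}, \cite{MSX}) do supply the needed boundedness for $1<p<\infty$. So your proposal is more careful than the paper's treatment: you are filling in a step that the paper leaves implicit (or absorbs into its definition of $\|\nabla_\theta x\|_{L^p}$). If one adopts the convention $\|\nabla_\theta x\|_{L^p}:=\|(-\Delta_\theta)^{1/2}x\|_{L^p}$, as the paper appears to do tacitly, then your second step is unnecessary; if instead one wants the gradient norm in the column/row sense, your Riesz-transform argument is exactly what is required.
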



\section{Equivalence of the Nash and the logarithmic Sobolev inequalities} 
In this subsection, we derive the logarithmic Sobolev inequality on noncommutative Euclidean spaces for the full range \(1 < p < \infty\), removing the previous restriction \(s > 0\) present in \cite{RST}, where the result was established only for \(1 < p < 2\).
\begin{thm}(Logarithmic Sobolev inequality). \label{sobolev-ineq} Let   $1<p<\infty$ be such that $d>sp.$
Then for any $0\neq x\in\dot{W}^{1,p}(\mathbb{R}^d_\theta)$ we have the fractional
logarithmic Sobolev  inequality
$$
\tau_{\theta}\left(\frac{|x|^p}{\|x\|^p_{L^p(\mathbb{R}^d_\theta)}}\log \Big(\frac{|x|^p}{\|x\|^p_{L^p(\mathbb{R}^d_\theta)}}\Big)\right)\leq \frac{d}{p}\log \Big(C_{d,p}\frac{\|\nabla_\theta{x}\|^p_{L^p(\mathbb{R}^d_\theta)}}{\|x\|^p_{L^p(\mathbb{R}^d_\theta)}}\Big),\quad  x\in \dot{W}^{1,p}(\mathbb{R}^d_\theta),
 $$
where $C_{d,p}>0$ is a constant independent of $x$ and 
$$
\dot{W}^{1,p}(\mathbb{R}^d_\theta)=\{x\in L^p(\mathbb{R}^d_\theta): \|\nabla_\theta{x}\|_{L^p(\mathbb{R}^d_\theta)}<\infty\}.
$$
\end{thm}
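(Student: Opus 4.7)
The strategy is to derive the logarithmic Sobolev inequality from the Sobolev inequality (Corollary \ref{Sobolev-inequality}) combined with Jensen's inequality for the operator-concave function $\log$, applied via the spectral calculus of $y := |x|^p$. Since $y$ and $\log y$ commute, the Jensen step reduces cleanly to its classical scalar form.

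Both sides of the claimed inequality are invariant under the scaling $x \mapsto \alpha x$ for $\alpha > 0$, so I may normalize $\|x\|_{L^p(\mathbb{R}^d_\theta)} = 1$ throughout. Set $q := dp/(d-p)$, which is finite since $d > p$ (i.e.\ $s=1$), and satisfies $\tfrac{1}{p} - \tfrac{1}{q} = \tfrac{1}{d}$. Applying Corollary \ref{Sobolev-inequality} to $x \in \dot W^{1,p}(\mathbb{R}^d_\theta)$ gives
$$\|x\|_{L^q(\mathbb{R}^d_\theta)}^p \leq C_{d,p}^p \,\|\nabla_\theta x\|_{L^p(\mathbb{R}^d_\theta)}^p.$$
Writing $t := q/p > 1$, one has $\tau_\theta(y) = 1$ and $\tau_\theta(y^t) = \|x\|_{L^q}^q$, so raising the Sobolev inequality to the $t$-th power and taking logarithms yields
\begin{equation}\label{plan:sob}
\log \tau_\theta(y^t) \leq t\,\log\bigl( C_{d,p}^p\,\|\nabla_\theta x\|_{L^p}^p \bigr).
\end{equation}

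The remaining task is to establish the complementary inequality
\begin{equation}\label{plan:jensen}
(t-1)\,\tau_\theta(y\log y) \leq \log \tau_\theta(y^t).
\end{equation}
Let $\mu_y$ denote the scalar spectral measure of $y$ with respect to $\tau_\theta$, so that $\tau_\theta(g(y)) = \int_0^\infty g(s)\,d\mu_y(s)$ for nonnegative Borel $g$. Since $\int_0^\infty s\,d\mu_y(s) = \tau_\theta(y) = 1$, the measure $s\,d\mu_y(s)$ is a probability measure on $(0,\infty)$, and the classical Jensen inequality applied to the concave function $\log$ and the integrand $s \mapsto s^{t-1}$ gives
$$(t-1)\tau_\theta(y\log y) = \int_0^\infty (t-1)\log s \cdot s\,d\mu_y(s) \leq \log \int_0^\infty s^{t-1}\cdot s\,d\mu_y(s) = \log\tau_\theta(y^t),$$
which is \eqref{plan:jensen}. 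Combining \eqref{plan:sob} and \eqref{plan:jensen} with the identity $t/(t-1) = d/p$, I obtain $\tau_\theta(y\log y) \leq (d/p)\log\bigl(C_{d,p}^p\|\nabla_\theta x\|_{L^p}^p\bigr)$; reverting the normalization and relabelling $C_{d,p}^p \to C_{d,p}$ delivers the claim.

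\textbf{Main obstacle.} The only delicate point is the rigorous application of Jensen to the possibly unbounded positive operator $y = |x|^p$: a direct use of Lemma \ref{L-Jensen-inequality} requires bounded operators. The cleanest way out, as above, is to work in the abelian von Neumann subalgebra generated by $y$ and invoke classical scalar Jensen with respect to $\mu_y$, using that $\tau_\theta(y^t) < \infty$ from Sobolev makes $s \mapsto s^{t-1}$ integrable against $s\,d\mu_y$ and that $s \log s \geq -1/e$ controls the negative part of $\int \log s \cdot s\,d\mu_y$. Alternatively, one applies Lemma \ref{L-Jensen-inequality} to the bounded truncations $y_n := y\,\mathbf{1}_{[1/n,n]}(y)$ with the positive normalized functional $\phi(a) := \tau_\theta(y\,a)$, and passes to the limit by monotone convergence. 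All remaining manipulations are routine arithmetic.
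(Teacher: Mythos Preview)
Your proposal is correct and follows essentially the same route as the paper: the paper's proof consists of one line invoking the Sobolev inequality (Corollary~\ref{Sobolev-inequality}) together with \cite[Lemma~6.3]{RST}, the latter being precisely the logarithmic H\"older/entropy inequality you establish in~\eqref{plan:jensen} via scalar Jensen on the spectral measure of $y=|x|^p$. You have simply unpacked the content of that cited lemma rather than quoting it, and your handling of the unbounded-operator issue through the abelian subalgebra generated by $y$ is the standard and cleanest justification.
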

\begin{proof}
The assertion follows from the Sobolev inequality in Corollary \ref{Sobolev-inequality} and  \cite[Lemma 6.3. p. 23]{RST}.   
\end{proof}
Next, we show that the Nash type inequality and the logarithmic Sobolev inequality are equivalent when $p=2.$  
\begin{thm}\label{Nash-log-Sobolev}Let $d>2.$  Then for any $0\neq x\in\mathcal{S}(\mathbb{R}_{\theta}^d)$ the following inequalities are equivalent: 
\begin{equation}\label{sobolev-log-estimate}
\tau_{\theta}\left(\frac{|x|^2}{\|x\|^2_{L^2(\mathbb{R}^d_\theta)}}\log \Big(\frac{|x|^2}{\|x\|^2_{L^2(\mathbb{R}^d_\theta)}}\Big)\right)\leq \frac{d}{2}\log \Big(C_{d,2}\frac{\|\nabla_\theta{x}\|^2_{L^2(\mathbb{R}^d_\theta)}}{\|x\|^2_{L^2(\mathbb{R}^d_\theta)}}\Big)\,\ \text{(Log-Sobolev inequality)}
 \end{equation}
and 
\begin{eqnarray}\label{Nash-inequality}
  \|x\|^{1+\frac{2}{d}}_{L^{2}(\mathbb{R}^d_\theta)} \leq C^\frac{1}{2}_{d,2}\|\nabla_\theta{x}\|_{L^{2}(\mathbb{R}^d_\theta)}\|x\|^{\frac{2}{d}}_{L^{1}(\mathbb{R}^d_\theta)}\quad \text{(Nash inequality)}. 
 \end{eqnarray}
\end{thm}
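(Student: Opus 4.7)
The plan is to prove each implication separately, routing the easy direction through machinery already established in the paper and treating the harder direction via an operator Jensen argument.

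For the direction Nash $\Rightarrow$ Log-Sobolev, I would invoke the noncommutative Varopoulos-type result cited as Theorem \ref{Zhao-thm}: the Nash inequality implies the Sobolev inequality (with a possibly different constant), and then Theorem \ref{sobolev-ineq} applied at the parameters $p=2$, $s=1$ (valid since $d>2$) upgrades the Sobolev inequality into the logarithmic Sobolev inequality \eqref{sobolev-log-estimate}. This leg of the equivalence therefore requires no new computation.

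The substantive direction is Log-Sobolev $\Rightarrow$ Nash, which I would prove by applying the operator Jensen inequality (Lemma \ref{L-Jensen-inequality}) to the operator concave function $\log$ on $(0,\infty)$. Concretely, set
$$
\rho := \frac{|x|^2}{\|x\|^2_{L^2(\mathbb{R}^d_\theta)}},
$$
which is a positive $\tau_\theta$-measurable operator with $\tau_\theta(\rho)=1$, and define the state $\pi(y) := \tau_\theta(\rho y)$; as a unital positive linear functional into $\mathbb{C}$, $\pi$ is a positive normalized linear map in the sense of Lemma \ref{L-Jensen-inequality}. Applying that lemma to $-\log$ (operator convex on $(0,\infty)$) and to $y=\rho^{-1/2}$ yields
$$
\pi(\log \rho^{-1/2}) \leq \log \pi(\rho^{-1/2}).
$$
Since $\rho^{-1/2}$ commutes with $\rho$ by the Borel functional calculus, $\rho \cdot \rho^{-1/2} = \rho^{1/2} = |x|/\|x\|_{L^2(\mathbb{R}^d_\theta)}$, so
$$
\pi(\rho^{-1/2}) = \tau_\theta(\rho^{1/2}) = \frac{\|x\|_{L^1(\mathbb{R}^d_\theta)}}{\|x\|_{L^2(\mathbb{R}^d_\theta)}},
\qquad
\pi(\log\rho^{-1/2}) = -\tfrac{1}{2}\tau_\theta(\rho \log \rho).
$$
Substituting gives $\tau_\theta(\rho \log \rho) \geq \log\bigl(\|x\|^2_{L^2(\mathbb{R}^d_\theta)}/\|x\|^2_{L^1(\mathbb{R}^d_\theta)}\bigr)$. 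Chaining this with \eqref{sobolev-log-estimate} and exponentiating produces
$$
\frac{\|x\|^2_{L^2(\mathbb{R}^d_\theta)}}{\|x\|^2_{L^1(\mathbb{R}^d_\theta)}} \leq \left(C_{d,2}\, \frac{\|\nabla_\theta x\|^2_{L^2(\mathbb{R}^d_\theta)}}{\|x\|^2_{L^2(\mathbb{R}^d_\theta)}}\right)^{d/2},
$$
and raising this to the $1/d$ power and rearranging yields exactly the Nash inequality \eqref{Nash-inequality} with constant $C_{d,2}^{1/2}$.

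The main obstacle is the rigorous application of Lemma \ref{L-Jensen-inequality} to $\rho^{-1/2}$, whose spectrum is generally unbounded above and may include $0$ on the kernel of $\rho$. To handle this I would regularize with the spectral projection $e_\varepsilon := \chi_{[\varepsilon,1/\varepsilon]}(\rho)$, replace $\rho^{-1/2}$ by the bounded operator $y_\varepsilon := \rho^{-1/2} e_\varepsilon + (1-e_\varepsilon)$ whose spectrum lies in $(0,\infty)$, apply Jensen, and then let $\varepsilon \to 0^+$; finiteness of $\tau_\theta(\rho)$ together with $\tau_\theta(\rho\log\rho)\leq\infty$ from the log-Sobolev bound ensures all traces converge via monotone/dominated convergence of the generalised singular value functions, so the passage to the limit recovers the required inequality.
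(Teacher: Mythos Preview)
Your proposal is correct. For the direction Log-Sobolev $\Rightarrow$ Nash you follow essentially the paper's own route: both arguments apply the operator Jensen inequality (Lemma~\ref{L-Jensen-inequality}) with the state induced by $|x|^2/\|x\|_{L^2}^2$ to obtain the entropy lower bound $\tau_\theta(\rho\log\rho)\ge \log\bigl(\|x\|_{L^2}^2/\|x\|_{L^1}^2\bigr)$, and then chain this with \eqref{sobolev-log-estimate}. The only difference is the regularisation: you truncate spectrally via $\chi_{[\varepsilon,1/\varepsilon]}(\rho)$, whereas the paper shifts $|x|\mapsto |x|+\varepsilon$ and works with the state $u\mapsto \tau_\theta\bigl(\tfrac{|x|(|x|+\varepsilon)}{\||x|(|x|+\varepsilon)\|_{L^1}}\,u\bigr)$ applied to $(|x|+\varepsilon)^{-1}$. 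Both limiting procedures are legitimate for $x\in\mathcal{S}(\mathbb{R}^d_\theta)$.

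For Nash $\Rightarrow$ Log-Sobolev the approaches genuinely differ. You take the indirect path Nash $\Rightarrow$ Sobolev (Theorem~\ref{Zhao-thm}) $\Rightarrow$ Log-Sobolev (the mechanism behind Theorem~\ref{sobolev-ineq}). The paper instead gives a direct, self-contained argument: starting from the H\"older interpolation inequality it differentiates the functional $\mathbf{L}(r)=\log\bigl(\|x\|_{L^p}/\|x\|_{L^r}\bigr)+(\eta(r)-1)\log\bigl(\|x\|_{L^p}/\|x\|_{L^q}\bigr)$ in the Lebesgue exponent to derive the logarithmic H\"older bound $\tau_\theta(\rho\log\rho)\le \log\bigl(\|x\|_{L^2}^2/\|x\|_{L^1}^2\bigr)$ (the case $p=1$, $q=r=2$), and combines this with Nash rewritten as $\|x\|_{L^2}^2/\|x\|_{L^1}^2\le \bigl(C_{d,2}\|\nabla_\theta x\|_{L^2}^2/\|x\|_{L^2}^2\bigr)^{d/2}$. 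The gain of the paper's route is that it keeps the constants matched---the $C_{d,2}$ in \eqref{sobolev-log-estimate} and the $C_{d,2}^{1/2}$ in \eqref{Nash-inequality} correspond exactly in both directions---whereas your detour through Varopoulos only yields the equivalence up to unspecified constants.
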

\begin{proof} Let us show that inequality \eqref{sobolev-log-estimate} implies inequality \eqref{Nash-inequality}. {\color{red} Let $h(v)=\log(\frac{1}{v}),$ $v>0,$ and let \( 0\neq x \in S(\mathbb{R}^d_\theta) \). For any \( \varepsilon > 0 \) set $|x|+\varepsilon.$ Then $|x|+\varepsilon$ is invertible and its inverse $(|x|+\varepsilon)^{-1}$ is bounded. Therefore, we have
\begin{eqnarray*}
\log\left(
\frac{\|x\|^2_{L^2(\mathbb{R}_{\theta}^d)}}{\|x\|_{L^1(\mathbb{R}_{\theta}^d)}}
\right)&=&\log\left(\frac{\||x|^2\|_{L^1(\mathbb{R}_{\theta}^d)}}{\|x\|_{L^1(\mathbb{R}_{\theta}^d)}}
\right)\\
&\leq&\log\left(
\frac{\|\,|x|(|x| + \varepsilon)\|_{L^1(\mathbb{R}_{\theta}^d)}}{\|x\|_{L^1(\mathbb{R}_{\theta}^d)}}
\right)\\
&=& h\left(
\frac{\|x\|_{L^1(\mathbb{R}_{\theta}^d)}}{\||x|(|x| + \varepsilon)\|_{L^1(\mathbb{R}_{\theta}^d)}}
\right)\\
&=& h\left(\tau_{\theta}\left(
\frac{|x|(|x| + \varepsilon)}{\||x|(|x| + \varepsilon)\|_{L^1(\mathbb{R}_{\theta}^d)}} \cdot \frac{1}{|x| + \varepsilon}
\right)\right),
\end{eqnarray*}
Notice that the map
$$
B({L^2(\mathbb{R}^d)}) \ni u \mapsto \tau_{\theta}\left(
\frac{|x|(|x| + \varepsilon)}{\||x|(|x| + \varepsilon)\|_{L^1(\mathbb{R}_{\theta}^d)}} \cdot u
\right)\in \mathbb{C}
$$
is a positive normalized linear map and $h$ is operator convex on $(0, \infty)$ (see \cite[Example 1.7]{PTJ}). Therefore, since $Sp(\frac{1}{|x|+\varepsilon})\subset (0, \infty),$ by Jensen's inequality \eqref{Jensen-inequality}, we have
\begin{eqnarray*}
h\left(\tau_{\theta}\left(
\frac{|x|(|x| + \varepsilon)}{\||x|(|x| + \varepsilon)\|_{L^1(\mathbb{R}_{\theta}^d)}} \cdot \frac{1}{|x| + \varepsilon}
\right)\right)
&\leq&\tau_{\theta}\left(
\frac{|x|(|x| + \varepsilon)}{\||x|(|x| + \varepsilon)\|_{L^1(\mathbb{R}_{\theta}^d)}} \cdot h\left(\frac{1}{|x| + \varepsilon}
\right)\right)\\
&=&
\tau_{\theta}\left(
\frac{|x|(|x| + \varepsilon)}{\||x|(|x| + \varepsilon)\|_{L^1(\mathbb{R}_{\theta}^d)}} \cdot \log(|x| + \varepsilon)
\right).
\end{eqnarray*}
Now, we need to show that
\begin{equation}\label{lim-eps}
\tau_{\theta}\left(
\frac{|x|(|x| + \varepsilon)}{\||x|(|x| + \varepsilon)\|_{L^1(\mathbb{R}_{\theta}^d)}} \cdot \log(|x| + \varepsilon)
\right)
\to
\tau_{\theta}\left(
\frac{|x|^2}{\|x\|^2_{L^2(\mathbb{R}_{\theta}^d)}} \cdot \log(|x|)\right)
\end{equation}
 as \( \varepsilon \to 0\).
Indeed, first notice
$$
|x|(|x| + \varepsilon) - |x|^2 = \varepsilon |x|,
$$
which implies that
$$
\left| \||x|(|x| + \varepsilon)\|_{L^1(\mathbb{R}_{\theta}^d)} - \|x\|^2_{L^2(\mathbb{R}_{\theta}^d)} \right| = \varepsilon \|x\|_{L^1(\mathbb{R}_{\theta}^d)} \to 0,
$$
 as \( \varepsilon \to 0\).
On the other hand, we have
$$
(|x| + \varepsilon) \log(|x| + \varepsilon) \to |x| \log(|x|)
$$
in the strong operator topology, provided that $s\mapsto s \log(s)$ is continuous on $(0, \infty)$ and is bounded on $(0,\|x\|_{L^{\infty}(\mathbb{R}_{\theta}^d)}].$ Since  $|x| \in L^1(\mathbb{R}^d_\theta),$
it follows that
$$
\tau_{\theta}(|x|(|x| + \varepsilon) \log(|x| + \varepsilon)) \to \tau_{\theta}(|x|^2 \log(|x|))\quad \text{as}\quad \varepsilon \to 0,
$$
which implies \eqref{lim-eps}.}
So, we have
\begin{eqnarray}\label{logarithmic-function-1}
\begin{split}
\log\left(
\frac{\|x\|^2_{L^2(\mathbb{R}_{\theta}^d)}}{\|x\|_{L^1(\mathbb{R}_{\theta}^d)}}
\right)\leq\tau_{\theta}\left(\frac{|x|^2}{\|x\|^2_{L^2(\mathbb{R}_{\theta}^d)}}\log(|x|)\right).
\end{split}
\end{eqnarray}
Hence, by simple properties of the logarithmic function  and inequality \eqref{sobolev-log-estimate}, we can write 
\begin{eqnarray}\label{logarithmic-function-2}
\begin{split}
\tau_{\theta}\left( \frac{|x|^2}{\||x|\|^2_{L^2(\mathbb{R}_{\theta}^d)}}\log(|x|)\right)&=\frac{1}{2}\tau_{\theta}\left( \frac{|x|^2}{\|x\|^2_{L^2(\mathbb{R}_{\theta}^d)}}\log(|x|^2)\right)\\&=\frac{1}{2}\tau_{\theta}\left( \frac{|x|^2}{\|x\|^2_{L^2(\mathbb{R}_{\theta}^d)}}\log\big(\|x\|^2_{L^2(\mathbb{R}_{\theta}^d)}\frac{|x|^2}{\|x\|^2_{L^2(\mathbb{R}_{\theta}^d)}}\big)\right)
\\&=  \log(\|x\|_{L^2(\mathbb{R}_{\theta}^d)})+\frac{1}{2}\tau_{\theta}\left( \frac{|x|^2}{\|x\|^2_{L^2(\mathbb{R}_{\theta}^d)}}\log\left(\frac{|x|^2}{\|x\|^2_{L^1(\mathbb{R}_{\theta}^d)}}\right)\right)
\\&\overset{\eqref{sobolev-log-estimate}}{\leq}   \log(\|x\|_{L^2(\mathbb{R}_{\theta}^d)})+ \frac{d}{4}\log \left(C_{d,2}\frac{\|\nabla_\theta{x}\|^2_{L^2(\mathbb{R}^d_\theta)}}{\|x\|^2_{L^2(\mathbb{R}^d_\theta)}}\right)\\
&= \log(\|x\|_{L^2(\mathbb{R}_{\theta}^d)})+  \log \left(C^{\frac{d}{4}}_{d,2}\frac{\|\nabla_\theta{x}\|^{\frac{d}{2}}_{L^2(\mathbb{R}^d_\theta)}}{\|x\|^{\frac{d}{2}}_{L^2(\mathbb{R}^d_\theta)}}\right)\\
&=\log \left(C^{\frac{d}{4}}_{2,d}\frac{\|\nabla_\theta{x}\|^{\frac{d}{2}}_{L^2(\mathbb{R}^d_\theta)}}{\|x\|^{\frac{d}{2}-1}_{L^2(\mathbb{R}^d_\theta)}}\right).
\end{split}
\end{eqnarray}
Combining \eqref{logarithmic-function-1} and \eqref{logarithmic-function-2}, we infer that
\begin{eqnarray*}  
 \frac{\|x\|^2_{L^2(\mathbb{R}_{\theta}^d)}}{\|x\|_{L^1(\mathbb{R}_{\theta}^d)}} 
\leq  C_{d,2}^\frac{d}{4}\frac{\|\nabla_\theta{x}\|^\frac{d}{2}_{L^2(\mathbb{R}^d_\theta)}}{\|x\|^{\frac{d}{2}-1}_{L^2(\mathbb{R}^d_\theta)}},  
\end{eqnarray*}
Thus, by taking both sides of the previous inequality to the power of $\frac{2}{d},$ we obtain \eqref{Nash-inequality}.

Conversely, let us consider the non-commutative H\"{o}lder inequality (see \cite[Lemma 6.3]{RST}): 
\begin{eqnarray}\label{Holder-inequality}
\|x\|_{L^r(\mathbb{R}^d_\theta)}\leq \|x\|^{\eta}_{L^p(\mathbb{R}^d_\theta)}\|x\|^{1-\eta}_{L^q(\mathbb{R}^d_\theta)}, 
\end{eqnarray}
with $\eta:=\frac{p}{r}\frac{q-r}{q-p}$ such that $p\leq r\leq q.$  Hence, by  taking the logarithm from both sides of the inequality \eqref{Holder-inequality}  we obtain
$$
\mathbf{L}(r):=\log \left(\frac{\|x\|_{L^{p}(\mathbb{R}^d_\theta)}}{\|x\|_{L^{r}(\mathbb{R}^d_\theta)}}\right)+(\eta(r)-1)\log \left(\frac{\|x\|_{L^{p}(\mathbb{R}^d_\theta)}}{\|x\|_{L^{q}(\mathbb{R}^d_\theta)}}\right)\geq0. 
$$
This inequality becomes an equality when $q = p.$  Moreover, we can rewrite the  inequality  \eqref{Holder-inequality} as follows 
\begin{eqnarray}\label{L-inequality}
\log \left(\frac{\|x\|_{L^{r}(\mathbb{R}^d_\theta)}}{\|x\|_{L^{q}(\mathbb{R}^d_\theta)}}\right)\leq  \eta \log \left(\frac{\|x\|_{L^{p}(\mathbb{R}^d_\theta)}}{\|x\|_{L^{q}(\mathbb{R}^d_\theta)}}\right).
\end{eqnarray}
For $r\leq q,$ the Logarithmic H\"{o}lder inequality  (see, \cite[Lemma 6.3]{RST}) holds
\begin{eqnarray}\label{log-H-inequality}
 \tau_{\theta}\left( \frac{|x|^r}{\|x\|^r_{L^{r}(\mathbb{R}^d_\theta)}}\log \left( \frac{|x|^r}{\|x\|^r_{L^{r}(\mathbb{R}^d_\theta)}}  \right)\right)\le   \frac{rp}{q-p} \log \left(\frac{\|x\|_{L^{q}(\mathbb{R}^d_\theta)}}{\|x\|_{L^{r}(\mathbb{R}^d_\theta)}}\right).    
\end{eqnarray}
Since 
 $$
\frac{d}{dr}\left[\frac{p}{r}\frac{q-r}{q-p}\right]=-\frac{p}{r^2}\frac{q-r}{q-p}-\frac{p}{r}\frac{1}{q-p}=-\frac{p}{r}\frac{q-r}{q-p}(\frac{1}{r}+\frac{1}{q-r})=-\frac{\eta}{r}\frac{q}{q-r}
$$ 
and 
\begin{eqnarray*}
 \frac{d}{dr}\left[ \log \left(\frac{\|x\|_{L^{p}(\mathbb{R}^d_\theta)}}{\|x\|_{L^{r}(\mathbb{R}^d_\theta)}}\right)\right]&=&-\frac{d}{dr}\left[ \frac{1}{r}\log \left( \|x\|^r_{L^{r}(\mathbb{R}^d_\theta)}  \right)\right]+\frac{d}{dr}\left[ \log \left( \|x\|_{L^{p}(\mathbb{R}^d_\theta)} \right)\right]\\
 &=&-\frac{1}{r^2}\frac{\tau_{\theta}\left(|x|^r\log|x|^r\right)}{\|x\|^r_{L^{r}(\mathbb{R}^d_\theta)}}+\frac{1}{r^2}\log \left( \|x\|^r_{L^{r}(\mathbb{R}^d_\theta)}  \right)\\
 &=&-\frac{1}{r^2}\tau_{\theta}\left(\frac{|x|^r\log|x|^r}{\|x\|^r_{L^{r}(\mathbb{R}^d_\theta)}}\right)+\frac{1}{r^2}\tau_{\theta}\left(\frac{|x|^r\log \left( \|x\|^r_{L^{r}(\mathbb{R}^d_\theta)}  \right)}{\|x\|^r_{L^{r}(\mathbb{R}^d_\theta)}}\right)\\
 &=&-\frac{1}{r^2}\tau_{\theta}\left( \frac{|x|^r}{\|x\|^r_{L^{r}(\mathbb{R}^d_\theta)}}\log \left( \frac{|x|^r}{\|x\|^r_{L^{r}(\mathbb{R}^d_\theta)}}  \right)\right),  
\end{eqnarray*}
differentiating the function $\mathbf{L}(r)$ with respect to $r$ (see \cite[Lemma 4.2]{Xiong}), and we have
\begin{eqnarray}\label{L-function-2}
\begin{split}
 \frac{d\mathbf{L}}{dr}&=-\frac{1}{r^2}\tau_{\theta}\left( \frac{|x|^r}{\|x\|^r_{L^{r}(\mathbb{R}^d_\theta)}}\log \left( \frac{|x|^r}{\|x\|^r_{L^{r}(\mathbb{R}^d_\theta)}}  \right)\right)-\frac{\eta}{r}\frac{q}{q-r} \log \left(\frac{\|x\|_{L^{p}(\mathbb{R}^d_\theta)}}{\|x\|_{L^{q}(\mathbb{R}^d_\theta)}}\right)\\
 &\overset{\eqref{log-H-inequality}}{\geq}-\frac{1}{r}\left(\frac{q}{q-r}\right) \log \left(\frac{\|x\|_{L^{q}(\mathbb{R}^d_\theta)}}{\|x\|_{L^{r}(\mathbb{R}^d_\theta)}}\right)-\frac{\eta}{r}\frac{q}{q-r} \log \left(\frac{\|x\|_{L^{p}(\mathbb{R}^d_\theta)}}{\|x\|_{L^{q}(\mathbb{R}^d_\theta)}}\right)\\
 &=-\frac{1}{r}\left(\frac{q}{q-r}\right) \left[\log \left(\frac{\|x\|_{L^{q}(\mathbb{R}^d_\theta)}}{\|x\|_{L^{r}(\mathbb{R}^d_\theta)}}\right)- \eta \log \left(\frac{\|x\|_{L^{q}(\mathbb{R}^d_\theta)}}{\|x\|_{L^{p}(\mathbb{R}^d_\theta)}}\right)\right]\\
 &\overset{\eqref{L-inequality}}{\geq}0.
 \end{split}
\end{eqnarray}
From the fact $r^2\frac{\eta}{r}\frac{q}{q-r}=\frac{pq}{q-p}$ and the estimate  \eqref{L-function-2} we get 
\begin{eqnarray*}
-r^2\frac{d\mathbf{L}}{dr}\overset{\eqref{L-function-2}}{=}\tau_{\theta}\left( \frac{|x|^r}{\|x\|^r_{L^{r}(\mathbb{R}^d_\theta)}}\log \left( \frac{|x|^r}{\|x\|^r_{L^{r}(\mathbb{R}^d_\theta)}}  \right)\right)\leq-\frac{pq}{q-p} \log \left(\frac{\|x\|_{L^{p}(\mathbb{R}^d_\theta)}}{\|x\|_{L^{q}(\mathbb{R}^d_\theta)}}\right)\\
=  \frac{p}{q-p} \log \left(\frac{\|x\|^q_{L^{q}(\mathbb{R}^d_\theta)}}{\|x\|^q_{L^{p}(\mathbb{R}^d_\theta)}}\right).
\end{eqnarray*}
In other words, 
\begin{eqnarray}\label{log-inequality}
 \tau_{\theta}\left( \frac{|x|^r}{\|x\|^r_{L^{r}(\mathbb{R}^d_\theta)}}\log \left( \frac{|x|^r}{\|x\|^r_{L^{r}(\mathbb{R}^d_\theta)}}  \right)\right)\le   \frac{p}{q-p} \log \left(\frac{\|x\|^q_{L^{q}(\mathbb{R}^d_\theta)}}{\|x\|^q_{L^{p}(\mathbb{R}^d_\theta)}}\right).    
\end{eqnarray}

If  $q=r=2$ and $p=1,$ then we can rewrite the inequality \eqref{log-inequality} as follows 
\begin{eqnarray}\label{log-inequality-1}
\tau_{\theta}\left(\frac{|x|^2}{\|x\|^2_{L^{2}(\mathbb{R}^d_\theta)}}\log\left(\frac{|x|^2}{ \|x\|^2_{L^{2}(\mathbb{R}^d_\theta)}}\right)\right)\overset{\eqref{log-inequality}}{\leq} \log \left(\frac{\|x\|^2_{L^{2}(\mathbb{R}^d_\theta)}}{\|x\|^2_{L^{1}(\mathbb{R}^d_\theta)}}\right).    
\end{eqnarray}

Next,   by the Nash type inequality \eqref{Nash-inequality}, we get 
\begin{eqnarray} \label{Nash-type-inequality} 
 \frac{\|x\|^2_{L^2(\mathbb{R}_{\theta}^d)}}{\|x\|^2_{L^1(\mathbb{R}_{\theta}^d)}} 
\leq  \left[C_{d,2}  \frac{\|\nabla_\theta{x}\|^2_{L^2(\mathbb{R}^d_\theta)}}{\|x\|^2_{L^2(\mathbb{R}^d_\theta)}}\right]^\frac{d}{2}.  \end{eqnarray}

According to \eqref{log-inequality-1} and \eqref{Nash-type-inequality},  we have 
\begin{eqnarray*}
\tau_{\theta}\left(\frac{|x|^2}{\|x\|^2_{L^{2}(\mathbb{R}^d_\theta)}}\log\left(\frac{|x|^2}{ \|x\|^2_{L^{2}(\mathbb{R}^d_\theta)}}\right)\right)\overset{\eqref{log-inequality-1}\eqref{Nash-type-inequality}}{\le} \frac{d}{2} \log \left(C_{d,2}\frac{\|\nabla_\theta{x}\|^{2}_{L^2(\mathbb{R}^d_\theta)}}{\|x\|^{2}_{L^2(\mathbb{R}^d_\theta)}}\right), 
\end{eqnarray*}
which proves \eqref{sobolev-log-estimate}. The proof is complete.
\end{proof}
As a combination of our result in Theorem \ref{Zhao-thm} with Theorem \ref{Nash-log-Sobolev} we obtain the following.
\begin{thm}\label{Main-thm} Let $d>2.$  Then for any $0\neq x\in\mathcal{S}(\mathbb{R}_{\theta}^d)$ the following inequalities are equivalent: 
\begin{eqnarray}\label{Sobolev-ineq}
\|x\|^2_{L^{\frac{2d}{d-2}}(\mathbb{R}^d_{\theta})}\leq C_{d}   \|\nabla_{\theta}{x}\|^2_{L^{2}(\mathbb{R}^d_\theta)}\quad \text{(Sobolev  inequality)}, 
\end{eqnarray}
\begin{eqnarray}\label{Nash-ineq}
\|x\|^{1+\frac{2}{d}}_{L^2(\mathbb{R}^d_{\theta})}\leq C_{d} \|\nabla_{\theta}{x}\|_{L^{2}(\mathbb{R}^d_\theta)}
\|x\|^{\frac{2}{d}}_{L^1(\mathbb{R}^d_{\theta})} \quad  \quad \text{(Nash inequality)},
\end{eqnarray}
\begin{eqnarray}\label{Heat-kernel-est}
\|e^{t\Delta_{\theta}}(x)\|_{L^{\infty}(\mathbb{R}^d_{\theta})} \leq \widetilde{C_{d}}t^{-\frac{d}{2}}   \|x\|_{L^{1}(\mathbb{R}^d_{\theta})},\quad t>0 \quad \text{(Heat kernel estimate)}, 
 \end{eqnarray}
  and 
\begin{equation}\label{sobolev-log-est}
\tau_{\theta}\left(\frac{|x|^2}{\|x\|^2_{L^2(\mathbb{R}^d_\theta)}}\log \Big(\frac{|x|^2}{\|x\|^2_{L^2(\mathbb{R}^d_\theta)}}\Big)\right)\leq \frac{d}{2}\log \Big(C_{d}^2\frac{\|\nabla_\theta{x}\|^2_{L^2(\mathbb{R}^d_\theta)}}{\|x\|^2_{L^2(\mathbb{R}^d_\theta)}}\Big)\text{(Log-Sobolev inequality)}.
\end{equation}
\end{thm}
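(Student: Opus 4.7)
The plan is to derive Theorem \ref{Main-thm} essentially as a direct synthesis of two results that are already in hand, so the proof proposal will be short and structural rather than computational. First I would invoke Theorem \ref{Zhao-thm} (the noncommutative Varopoulos theorem from \cite{Zhao}), which already establishes the chain of equivalences among the Sobolev inequality \eqref{Sobolev-ineq}, the Nash inequality \eqref{Nash-ineq}, and the heat kernel estimate \eqref{Heat-kernel-est}. Since the statement of Theorem \ref{Main-thm} concerns the same three inequalities with the same functional form (only renaming constants as $C_d, \widetilde{C_d}$), nothing more is needed for this triangle of implications.

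Next, I would attach the logarithmic Sobolev inequality \eqref{sobolev-log-est} to this chain using Theorem \ref{Nash-log-Sobolev}, which provides the equivalence between the Nash inequality \eqref{Nash-inequality} and the log-Sobolev inequality \eqref{sobolev-log-estimate} for $p=2$. Concretely, if the Nash inequality \eqref{Nash-ineq} holds, then Theorem \ref{Nash-log-Sobolev} yields \eqref{sobolev-log-est} (adjusting the constant from $C_{d,2}$ to $C_d^2$ exactly as in the statement); conversely, starting from \eqref{sobolev-log-est}, the same theorem returns the Nash inequality. Thus the Nash inequality acts as the hub through which the log-Sobolev inequality joins the triple already handled by Theorem \ref{Zhao-thm}.

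Finally, I would simply observe that transitivity of equivalence closes the loop: any one of the four inequalities \eqref{Sobolev-ineq}, \eqref{Nash-ineq}, \eqref{Heat-kernel-est}, \eqref{sobolev-log-est} implies each of the others, possibly with different constants. There is no genuine obstacle here; the only delicate point is bookkeeping of the constants to match the precise form in the theorem statement, since the constant appearing in Theorem \ref{Nash-log-Sobolev} is written as $C_{d,2}^{1/2}$ for Nash and $C_{d,2}$ inside the logarithm for log-Sobolev, while Theorem \ref{Main-thm} writes these as $C_d$ and $C_d^2$ respectively. This is a purely cosmetic renaming and can be absorbed into the statement ``with possibly different constants''. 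Hence the proof reduces to one sentence referencing Theorems \ref{Zhao-thm} and \ref{Nash-log-Sobolev}.
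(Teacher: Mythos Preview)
Your proposal is correct and matches the paper's own proof exactly: the paper's proof is the single sentence ``The proof is a just combination of Theorem \ref{Zhao-thm} and Theorem \ref{Nash-log-Sobolev},'' which is precisely the synthesis you describe.
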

\begin{proof}The proof is a just combination of Theorem \ref{Zhao-thm} and  Theorem \ref{Nash-log-Sobolev}.
\end{proof}

\section{Applications to Nonlinear PDEs} 

\subsection{A Gagliardo–Nirenberg type inequality} 
We now apply the Sobolev inequality to establish a Gagliardo-Nirenberg type inequality on the noncommutative Euclidean space. This inequality will then be used to study the well-posedness of certain nonlinear PDEs. The resulting Gagliardo-Nirenberg type inequality is stated below.

\begin{thm}\label{G–N-inequality} Let $d\geq 2$    and $0<{\eta}\leq1.$   Assume that 
\begin{eqnarray}\label{Parameter-condation-2}
s>0,\quad 0<r<\frac{d}{s} \quad \text{and} \quad 1\leq p\leq q \leq\frac{rd}{d-sr}. 
\end{eqnarray}
Then we have the following Gagliardo–Nirenberg type inequality
\begin{equation}\label{G–N-inequality-1}
\|x\|_{L^q(\mathbb{R}^d_{\theta})}\leq   \|(-\Delta_{\theta})^{\frac{s}{2}} x\|^{\eta}_{L^r(\mathbb{R}^d_{\theta})}\|x\|^{1-\eta}_{L^{p}(\mathbb{R}^d_{\theta})}, \quad x\in\mathcal{S}(\mathbb{R}_{\theta}^d), 
\end{equation}
where $\eta:=\frac{\frac{1}{p}-\frac{1}{q}}{\frac{s}{d}+\frac{1}{p}-\frac{1}{r}}$ and $\frac{s}{d}+\frac{1}{p}-\frac{1}{r}\ne0.$
\end{thm}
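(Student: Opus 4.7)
The plan is to combine the Sobolev-type inequality from Theorem \ref{Sobolev-type-inequalities} with the noncommutative Hölder inequality, interpolating between $L^{p}(\mathbb{R}^d_\theta)$ and the Sobolev conjugate space.

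First, set $q_{\ast} := \frac{rd}{d-sr}$. Since $0 < r < d/s$, we have $q_{\ast} > r$, and the exponents $r$ and $q_{\ast}$ satisfy $\frac{1}{r} - \frac{1}{q_{\ast}} = \frac{s}{d}$, which is precisely the relation \eqref{Parameter-condation} in Theorem \ref{Sobolev-type-inequalities}. Applying that theorem yields
$$
\|x\|_{L^{q_{\ast}}(\mathbb{R}^d_\theta)} \leq C_{d,r,q_{\ast}}\,\|(-\Delta_\theta)^{s/2} x\|_{L^r(\mathbb{R}^d_\theta)}, \qquad x \in \mathcal{S}(\mathbb{R}^d_\theta).
$$

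Next, since by hypothesis $p \leq q \leq q_{\ast}$, I would choose $\eta \in [0,1]$ so that
$$
\frac{1}{q} = \frac{\eta}{q_{\ast}} + \frac{1-\eta}{p}.
$$
Solving for $\eta$ and using $\frac{1}{q_{\ast}} = \frac{1}{r} - \frac{s}{d}$ gives
$$
\eta = \frac{\tfrac{1}{p} - \tfrac{1}{q}}{\tfrac{1}{p} - \tfrac{1}{q_{\ast}}} = \frac{\tfrac{1}{p} - \tfrac{1}{q}}{\tfrac{s}{d} + \tfrac{1}{p} - \tfrac{1}{r}},
$$
matching the $\eta$ in the statement (and the assumption $\frac{s}{d} + \frac{1}{p} - \frac{1}{r} \neq 0$ guarantees $\eta$ is well defined).

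Then I would invoke the noncommutative Hölder interpolation inequality (as recalled in \eqref{Holder-inequality}) to obtain
$$
\|x\|_{L^q(\mathbb{R}^d_\theta)} \leq \|x\|^{\eta}_{L^{q_{\ast}}(\mathbb{R}^d_\theta)}\,\|x\|^{1-\eta}_{L^{p}(\mathbb{R}^d_\theta)},
$$
and substitute the Sobolev bound on the $L^{q_{\ast}}$-factor to conclude
$$
\|x\|_{L^q(\mathbb{R}^d_\theta)} \leq C^{\eta}\,\|(-\Delta_\theta)^{s/2} x\|^{\eta}_{L^r(\mathbb{R}^d_\theta)}\,\|x\|^{1-\eta}_{L^{p}(\mathbb{R}^d_\theta)},
$$
which is exactly \eqref{G–N-inequality-1} (the constant $C^{\eta}$ is absorbed into the inequality as in the statement). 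The argument presents no real obstacle; the only subtlety is verifying that the admissible range \eqref{Parameter-condation-2} is precisely what is needed to make both the Sobolev step and the Hölder interpolation valid simultaneously, which is the whole point of the hypothesis $q \leq \frac{rd}{d-sr}$.
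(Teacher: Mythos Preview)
Your proposal is correct and follows essentially the same approach as the paper: both combine the noncommutative H\"older interpolation inequality \eqref{Holder-inequality} between $L^p$ and the Sobolev conjugate exponent $q_\ast=p^\ast=\frac{rd}{d-sr}$ with the Sobolev-type inequality of Theorem~\ref{Sobolev-type-inequalities}, and then verify that the hypotheses \eqref{Parameter-condation-2} yield the required range for $\eta$. The only cosmetic difference is the order in which you apply the two ingredients.
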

\begin{proof} {\color{red}Let $x\in\mathcal{S}(\mathbb{R}_{\theta}^d).$} Then, by the noncommutative H\"{o}lder inequality \cite[Lemma 6.3]{RST}) we find   
\begin{eqnarray}\label{H-inequality}
\|x\|_{L^q(\mathbb{R}^d_\theta)}\leq \|x\|^{\eta}_{L^{p^*}(\mathbb{R}^d_\theta)}\|x\|^{1-\eta}_{L^p(\mathbb{R}^d_\theta)},   
\end{eqnarray} 
for any $\eta \in [0, 1]$ such that   
\begin{eqnarray}\label{Parameter-condation-3}
1\leq p \leq q \leq  p^*\leq \infty\quad \text{and}\quad\frac{1}{q}=\frac{\eta}{p^*}+\frac{1-\eta}{p}.
\end{eqnarray}
On the other hand, for any  $1 <r<p^* < \infty,$ applying Theorem  \ref{Sobolev-type-inequalities} to \eqref{H-inequality} with $\frac{1}{r}-\frac{1}{p*}=\frac{s}{d}$  we obtain
$$
\|x\|_{L^q(\mathbb{R}^d_\theta)}\lesssim\|(-\Delta_{\theta})^{\frac{s}{2}} x\|^{\eta}_{L^{r}(\mathbb{R}^d_\theta)}\|x\|^{1-\eta}_{L^p(\mathbb{R}^d_\theta)}.
$$

Next, we will show that conditions \eqref{Parameter-condation-2} imply that $r<p^*$ and $\eta\in[0, 1].$  Indeed, by \eqref{Parameter-condation} and \eqref{Parameter-condation-2}, we have $\frac{1}{p^*}=\frac{1}{r}-\frac{s}{d}>0.$ Then, since $\frac{s}{d}+\frac{1}{p}-\frac{1}{r}\ne 0,$ it follows from \eqref{Parameter-condation-3} that 
$$
\eta(\frac{s}{d}+\frac{1}{p}-\frac{1}{r})=\frac{1}{p}-\frac{1}{q}\geq0,$$ 
which implies 
$$\eta=\frac{\frac{1}{p}-\frac{1}{q}}{\frac{s}{d}+\frac{1}{p}-\frac{1}{r}}.
$$
On the other hand, since
$q \leq\frac{rd}{d-sr}$ it follows from \eqref{Parameter-condation-2} that 
$$\frac{1}{p}-\frac{1}{q} \leq\frac{s}{d}+\frac{1}{p}-\frac{1}{r}, 
$$
which shows that $\eta\leq1.$
This concludes the proof.
\end{proof}
As a special case of Theorem \ref{G–N-inequality} when $p = r = 2,$ we obtain the following inequality which plays a key role for our further investigations.

\begin{cor}  Let $d\geq3$ and
$2\leq q\leq p^*=\frac{2d}{d-2}=2+\frac{4}{d-2}.$
Then we have the following inequality
\begin{equation}\label{G–N-inequality-2}
\|x\|_{L^q(\mathbb{R}^d_\theta)}\leq \|x\|^{\eta}_{L^{p^*}(\mathbb{R}^d_\theta)}\|x\|^{1-\eta}_{L^2(\mathbb{R}^d_\theta)}\leq \|(-\Delta_{\theta})^{\frac{s}{2}} x\|^{\eta}_{L^{2}(\mathbb{R}^d_\theta)}\|x\|^{1-\eta}_{L^2(\mathbb{R}^d_\theta)},\quad x\in\mathcal{S}(\mathbb{R}_{\theta}^d), 
\end{equation}  
for $\eta=\eta(q)=\frac{d(q-2)}{2d}\in[0,1].$
\end{cor}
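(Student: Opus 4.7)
The plan is to obtain the stated double inequality as a direct specialization of the Gagliardo–Nirenberg inequality in Theorem \ref{G–N-inequality} to the exponents $p = r = 2$. The strategy separates naturally into two steps: first a H\"{o}lder interpolation between $L^2$ and the Sobolev endpoint $L^{p^{\ast}}$, and then the Sobolev embedding into $L^{p^{\ast}}$ itself.

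For the first inequality in \eqref{G–N-inequality-2}, I would apply the noncommutative H\"{o}lder inequality from \cite[Lemma 6.3]{RST} (the same one used as \eqref{H-inequality} in the proof of Theorem \ref{G–N-inequality}) with $p = 2$ and $p^{\ast} = \frac{2d}{d-2}$. The interpolation identity $\frac{1}{q} = \frac{\eta}{p^{\ast}} + \frac{1-\eta}{2}$ then forces $\eta = d\bigl(\tfrac12 - \tfrac1q\bigr) = \frac{d(q-2)}{2q}$, and the constraint $\eta \in [0,1]$ is equivalent to the hypothesis $2 \leq q \leq p^{\ast}$, so no further verification of parameters is required.

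For the second inequality, I would invoke Theorem \ref{Sobolev-type-inequalities} with $p = 2$, $q = p^{\ast}$, and $s = 1$: the gap condition $\frac{1}{p} - \frac{1}{q} = \frac{s}{d}$ reduces to $\frac{1}{2} - \frac{d-2}{2d} = \frac{1}{d}$, which is exactly what is needed. Applying that theorem yields
$$\|x\|_{L^{p^{\ast}}(\mathbb{R}^d_\theta)} \lesssim \|(-\Delta_\theta)^{s/2} x\|_{L^2(\mathbb{R}^d_\theta)}.$$
Raising both sides to the power $\eta$ and multiplying through by $\|x\|^{1-\eta}_{L^2(\mathbb{R}^d_\theta)}$ closes the chain.

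No step presents a genuine obstacle: both building blocks are already established earlier in the paper, and the only thing to check is the compatibility of the exponents $p$, $q$, $s$, $\eta$ with the hypotheses of Theorems \ref{G–N-inequality} and \ref{Sobolev-type-inequalities}. I would also flag that the formula $\eta = \frac{d(q-2)}{2d}$ appearing in the statement should presumably read $\eta = \frac{d(q-2)}{2q}$, the value consistent with the H\"{o}lder interpolation identity; otherwise $\eta \in [0,1]$ fails at the endpoint $q = p^{\ast}$.
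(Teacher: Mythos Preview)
Your proposal is correct and matches the paper's approach exactly: the corollary is stated there without a separate proof, simply as the special case $p=r=2$ (and $s=1$) of Theorem \ref{G–N-inequality}, whose proof already consists of precisely the two steps you describe (the H\"{o}lder interpolation \eqref{H-inequality} followed by the Sobolev inequality from Theorem \ref{Sobolev-type-inequalities}). Your observation about the typo is also correct: the formula $\eta=\frac{d(q-2)}{2d}$ in the statement should read $\eta=\frac{d(q-2)}{2q}$, which is what one gets from the general expression for $\eta$ in Theorem \ref{G–N-inequality} upon substituting $p=r=2$, $s=1$.
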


\subsection{Linear damped wave equation on noncommutative Euclidean spaces}

For a Banach space $X,$ we denote by $C(\mathbb{R}^+; X)$ the Banach space of continuous $X$-valued  
functions on $\mathbb{R}^+=(0, \infty)$ with the norm  
$$
\| f \|_{C(\mathbb{R}^+;X)} = \sup_{s\in\mathbb{R}^+} \| f(s) \|_X.
$$
We also define the  space 
$C^1(\mathbb{R}^+; X)$ as follows
$$ 
C^1(\mathbb{R}^+; X) = \left\{ f : \mathbb{R}^+ \to X \,\middle|\, f \text{ is continuous on } \mathbb{R}^+, \text{ and } f' \text{ exists and is continuous on } \mathbb{R}^+ \right\}  
$$
with the norm 
$$ 
\|f\|_{C^1(\mathbb{R}^+; X)} = \sup\limits_{s \in \mathbb{R}^+} \|f(s)\|_X + \sup\limits_{s \in \mathbb{R}^+} \|f'(s)\|_X.
$$
For now, let us assume that  $X=L^2(\mathbb{R}^d_{\theta}).$
Given  $x \in C(\mathbb{R}^+; L^2(\mathbb{R}^d_{\theta})),$ we write   $x(t)$   for the value of  $x$  at time $t,$ and  $\partial_t x $  denotes the derivative of $x$  with respect to $t$ in the sense that  
\begin{equation}\label{Def_Par_Deri} 
\lim_{h \to 0} \left\| \frac{x(t+h) - x(t)}{h} - \partial_t x(t) \right\|_{L^2(\mathbb{R}^d_{\theta})} = 0.
\end{equation}
Moreover,  we understand the operator $\partial^{n}_t x(t)$ as follows 
$$
\lim_{h \to 0} \left\| \frac{\partial^{n-1}_tx(t+h) - \partial^{n-1}_tx(t)}{h} - \partial^{n}_t x(t) \right\|_{L^2(\mathbb{R}^d_{\theta})} = 0,\quad n \in \mathbb{N}.
$$
For each $\xi\in\mathbb{R}^d,$ it is clear that
$$
\tau_{\theta}\left(\left(\frac{x(t+h) - x(t)}{h}-\partial_{t}x(t)\right)U^*_{\theta}(\xi)\right)= \frac{\widehat{x}(t+h,\xi) - \widehat{x}(t,\xi)}{h}-\widehat{\partial_{t}x}(t,\xi). 
$$
Then, by the Plancherel  identity \eqref{Plancherel} and \eqref{Def_Par_Deri} we derive  
$$
\lim\limits_{h\rightarrow0}\left\|\frac{\widehat{x}(t+h,\xi) - \widehat{x}(t,\xi)}{h}-\widehat{\partial_{t}x}(t,\xi)\right\|_{L^2(\mathbb{R}^d)}\overset{\eqref{Def_Par_Deri}}{=}\lim\limits_{h\rightarrow0}\left\|\frac{x(t+h) - x(t)}{h}-\partial_{t}x(t)\right\|_{L^2(\mathbb{R}^d_{\theta})}=0, 
$$
which implies that $\partial_{t}\widehat{x}(t,\xi)= \widehat{\partial_{t}x}(t,\xi).$ Analogously, we can establish that
\begin{equation}\label{Partial_id}
\partial^{n}_{t}\widehat{x}(t,\xi)= \widehat{\partial^{n}_{t}x}(t,\xi),\quad \xi\in\mathbb{R}^d, n\in \mathbb{N}. 
\end{equation}
Now, we study the linear  equation
\begin{eqnarray}\label{Lin_equation}
\left\{ \begin{array}{lcl}
         \partial^2_tx(t)+(-\Delta_{\theta})x(t)+b\partial_tx(t)+mx(t)= 0,\quad t>0 ,\\ 
         x(0)=x_0\in L^2(\mathbb{R}_{\theta}^d),\\
         \partial_tx(t)=x_1\in L^2(\mathbb{R}_{\theta}^d),
                \end{array}\right.    
\end{eqnarray}
where $\Delta_{\theta}$ is the Laplace operator defined by \eqref{laplacian} and the damping is determined by $b>0$ and  $m>0.$

For any fixed each $\xi\in\mathbb{R}^d,$  applying the Fourier transform \eqref{direct-F-transform} to the equation \eqref{Lin_equation}, we have 
\begin{eqnarray}\label{ODE}
\left\{ \begin{array}{lcl}
         \partial^2_t\widehat{x}(t,\xi)+b\partial_t\widehat{x}(t,\xi)+(m+|\xi|^2)\widehat{x}(t,\xi)=0,\quad t>0,\\ 
         \widehat{x}(0,\xi)=\widehat{x}_0(\xi),\\
         \partial_t\widehat{x}(0,\xi)=\widehat{x}_1(\xi),
                \end{array}\right.    
\end{eqnarray}
where  $\widehat{x}(t,\xi):=\tau_{\theta}(x(t)U^*(\xi))$ (see, \eqref{direct-F-transform}). In the case  $|\xi|^2 + m \neq \frac{b^2}{4},$ equation \eqref{ODE} can be solved explicitly by
\begin{equation}\label{F_presentation}
\widehat{x}(t, \xi) = C_0 (\xi)e^{\left(-\frac{b}{2} + i\sqrt{|\xi|^2+m - \frac{b^2}{4}}\right)t} + C_1(\xi) e^{\left(-\frac{b}{2} - i\sqrt{|\xi|^2+m - \frac{b^2}{4}}\right)t},    
\end{equation}
where
$$
C_0 (\xi)= \left(\frac{b}{4i\sqrt{|\xi|^2+m - \frac{b^2}{4}}} +\frac{1}{2}\right)\widehat{x}_0(\xi)+\frac{1}{2i\sqrt{|\xi|^2+m - \frac{b^2}{4}}} \widehat{x}_1(\xi)
$$
and
$$
C_1 (\xi)=\left(\frac{ib}{4i\sqrt{|\xi|^2+m - \frac{b^2}{4}}} +\frac{1}{2}\right)\widehat{x}_0(\xi)+ \frac{i}{2\sqrt{|\xi|^2+m - \frac{b^2}{4}}} \widehat{x}_1(\xi).
$$
For the case $|\xi|^2 + m = \frac{b^2}{4},$ its solutions can be given by 
$$
\widehat{x}(t, \xi) =C_0e^{\frac{b}{2}t}\widehat{x}_0(\xi)+  \widehat{x}_1(\xi) t e^{-\frac{b}{2}t}, 
$$
where 
$$
C_0(\xi)=\widehat{x}_0(\xi),\quad C_1(\xi)=\frac{b}{2}\widehat{x}_0(\xi)+\widehat{x}_1(\xi).
$$
For the convenience, we can rewrite solution \eqref{F_presentation} as the following form
\begin{equation}\label{F_presentation2}
\widehat{x}(t, \xi) = \widehat{K}_0(t, \xi)\widehat{x}_0(\xi)+ \widehat{K}_1(t, \xi) \widehat{x}_1(\xi),    
\end{equation}
where the functions $\widehat{K}_0(t, \xi)$ and  $\widehat{K}_1(t, \xi)$ are expressed as
\begin{eqnarray*}
\widehat{K}_0(t,\xi) &=& \left( \frac{b}{4i\sqrt{|\xi|^2+m - \frac{b^2}{4}}} +\frac{1}{2}\right)e^{\left(-\frac{b}{2} + i\sqrt{|\xi|^2+m - \frac{b^2}{4}}\right)t}\\
&+& \left(\frac{ib}{4\sqrt{|\xi|^2+m - \frac{b^2}{4}}} +\frac{1}{2}\right)e^{\left(-\frac{b}{2} - i\sqrt{|\xi|^2+m - \frac{b^2}{4}}\right)t};
\end{eqnarray*} 
\begin{eqnarray*}
\widehat{K}_1(t,\xi) &=& \frac{1}{2i\sqrt{|\xi|^2+m - \frac{b^2}{4}}} e^{\left(-\frac{b}{2} + i\sqrt{|\xi|^2+m - \frac{b^2}{4}}\right)t}\\
&+& \frac{i}{2\sqrt{|\xi|^2+m - \frac{b^2}{4}}} e^{\left(-\frac{b}{2} - i\sqrt{|\xi|^2+m - \frac{b^2}{4}}\right)t}\quad\text{for}\quad |\xi|^2 + m \neq\frac{b^2}{4}
\end{eqnarray*} 
 and
$$
\widehat{K}_0(t,\xi) =\left(1+\frac{b}{2}t\right)e^{-\frac{b}{2}  t};\quad
\widehat{K}_1(t,\xi) =te^{-\frac{b}{2}  t}\quad \text{for}\quad |\xi|^2 + m =\frac{b^2}{4}.
$$ 
Thus, acting with the inverse Fourier transform $\lambda_{\theta}$ to \eqref{F_presentation2}, we can write the solution to equation \eqref{Lin_equation} in terms of the convolution  
\begin{equation}\label{Lin_soution}
x(t) = K_0(t) \ast x_0 + K_1(t) \ast x_1,\quad t>0, 
\end{equation}
where  $K_i(t)$ denotes the inverse Fourier transform of the functions $\widehat{K}_i(t, \xi)$ for $i = 0, 1$. Let $D(\xi):=\frac{b^2}{4}-(m+|\xi|^2).$  {\color{red} If $D(\xi)\neq 0,$ then, there exists a positive constant $c_1>0$ such that  
\begin{equation}\label{D_estimate}
c_1<|\sqrt{D(\xi)}|\leq  \frac{b}{2}\sqrt{1-\frac{4m}{b^2}}<\frac{b}{2}.
\end{equation}}
Our next step is to examine three distinct cases corresponding to signs of the discriminant $D(\xi):$

{\bf Case I}: Let $D(\xi)>0.$  For the solution of the system  \eqref{ODE} for each $\widehat{x}(t,\xi)$  we obtain the following explicit formula
$$
\widehat{x}(t,\xi) =  e^{-\frac{b}{2}t} \left(\left( \frac{b\sinh(\sqrt{D(\xi)}t)}{2\sqrt{D(\xi)}}+\cosh(\sqrt{D(\xi)}t) \right)\widehat{x}_0(\xi)+\frac{\sinh(\sqrt{D(\xi)}t)}{\sqrt{D(\xi)}}\widehat{x}_1(\xi)\right). 
$$
We use the following well-known asymptotic expansions as $\xi' \to \infty$
$$
 \sinh(\xi') = \frac{e^{\xi'} - e^{-\xi'}}{2} \sim \frac{e^{\xi'}}{2},  \quad
\cosh(\xi') = \frac{e^{\xi'} + e^{-\xi'}}{2} \sim \frac{e^{\xi'}}{2}.
$$
So, letting  $\xi':= \sqrt{D(\xi)}t,$   using \eqref{D_estimate} we get  
\begin{eqnarray*}
\frac{b\sinh(\sqrt{D(\xi)}t)}{2\sqrt{D(\xi)}t}+ \cosh(\sqrt{D(\xi)})
&\leq&\frac{b}{2\sqrt{D(\xi)}} \cdot \frac{e^{\sqrt{D(\xi)}t}}{2} + \frac{e^{\sqrt{D(\xi)}t}}{2} \\
&\overset{\eqref{D_estimate}}{\leq}& \left( \frac{b}{2c_1} + \frac{1}{2} \right)e^{\frac{b}{2}\sqrt{1-\frac{4m}{b^2}}t}
\end{eqnarray*}
and 
$$ 
\sinh(\sqrt{D(\xi)}t)\overset{\eqref{D_estimate}}{\leq} \frac{1}{2}e^{\frac{b}{2}\sqrt{1-\frac{4m}{b^2}}t} \quad \text{for}\quad t>0. 
$$
Thus, we have  the following estimate
$$
|\widehat{x}(t,\xi)|\leq C_2e^{-\frac{b}{2}\left(1-\sqrt{1-\frac{4m}{b^2}}\right)t}\left(  |\widehat{x}_0(\xi)|+\frac{1}{\sqrt{D(\xi)}}|\widehat{x}_1(\xi)|\right), 
$$
where $C_2:=\max\left\{\left( \frac{b}{2c_1}+ \frac{1}{2} \right), \frac{1}{2}\right\}$ and $\frac{b}{2}\left(1-\sqrt{1-\frac{4m}{b^2}}\right)> 0.$

{\bf Case II}: Let $D(\xi)=0.$  Then, we have
$$
\widehat{x}(t,\xi)=\left(\left(1+\frac{b}{2}t\right)\widehat{x}_0(\xi)+ \widehat{x}_1(\xi)t\right)e^{-\frac{b}{2}t}.
$$
Moreover, we obtain the estimate
$$
|\widehat{x}(t,\xi)|\leq e^{-\frac{b}{2}t}\left((1+\frac{b}{2}t)|\widehat{x}_0(\xi)|+t|\widehat{x}_1(\xi)|\right).
$$

{\bf Case III}: Let $D(\xi)<0.$ In this case, we have
$$
\widehat{x}(t,\xi)=e^{-\frac{b}{2}t}\left(\left(\cos(\sqrt{D(\xi)}t)+\frac{b\sin(\sqrt{D(\xi)}t)}{2\sqrt{D(\xi)}}\right)\widehat{x}_0(\xi)+ \frac{\sin(\sqrt{D(\xi)}t)}{\sqrt{D(\xi)}}\widehat{x}_1(\xi)\right).
$$
For $t \geq 0,$ applying inequalities  $\sin(\sqrt{|D(\xi)|}t) \leq 1$ and $\cos (\sqrt{|D(\xi)|}t) \leq 1,$ and \eqref{D_estimate}  we obtain 
$$
\left|\cos(\sqrt{D(\xi)}t)+\frac{b\sin(\sqrt{D(\xi)}t)}{2\sqrt{D(\xi)}}\right|\overset{\eqref{D_estimate}}{\leq} 1+\frac{b}{2c_1}; \quad \left|\frac{\sin(\sqrt{D(\xi)}t)}{\sqrt{D(\xi)}}\right|\leq \frac{1}{\sqrt{D(\xi)}}.
$$
Consequently, we have the following estimate 
$$
|\widehat{x}(t,\xi)|\leq C_2 e^{-\frac{b}{2}t}\left( |\widehat{x}_0(\xi)|+\frac{1}{\sqrt{D}}|\widehat{x}_1(\xi)|\right),
$$
where $C_2:=\max\left\{1+\frac{b}{2c_1}, 1\right\}.$
 
Hence,  there is a positive constant $\delta > 0$ such that in all cases, we derive
\begin{eqnarray}\label{main-estimate}
\sqrt{D(\xi)}|\widehat{x}(t,\xi)|&\leq&e^{-\delta{t}}\left(\sqrt{D(\xi)}|\widehat{x}_0(\xi)|+ |\widehat{x}_1(\xi)|\right). 
\end{eqnarray}
In this section, for any $\gamma\in\mathbb{R},$ we consider the operator $(1-\Delta_{\theta})^{\frac{\gamma}{2}}$ with the symbol $g(\xi):=(1+|\xi|^2)^\frac{\gamma}{2},$  $\xi \in \mathbb{R}^d,$ defined by
\begin{equation}\label{Def_NL2}
(1-\Delta_{\theta})^{\frac{\gamma}{2}}x= \lambda_{\theta}(gf), \quad x \in \mathcal{S}(\mathbb{R}^d_{\theta}).
\end{equation}
The Sobolev space $\mathcal{H}^\gamma(\mathbb{R}_{\theta}^d)$ with $\gamma\in\mathbb{R},$ associated to the operator \eqref{Def_NL2} is defined by 
$$
\mathcal{H}^\gamma(\mathbb{R}_{\theta}^d):=\left\{x\in\mathcal{S}'(\mathbb{R}_{\theta}^d):\quad (1-\Delta_{\theta})^{\frac{\gamma}{2}}x\in L^2(\mathbb{R}_{\theta}^d)\right\},  
$$
with the norm $\|x\|_{\mathcal{H}^\gamma(\mathbb{R}_{\theta}^d)}:=\|(1-\Delta_{\theta})^{\frac{\gamma}{2}}x\|_{L^2(\mathbb{R}_{\theta}^d)}.$ For more details on this space, we refer the reader to \cite[Subsection 3.1]{Mc}. 
\begin{prop} \label{Proposition-1} Let $s \in\mathbb{R}$ and   $x_0\in \mathcal{H}^s(\mathbb{R}_{\theta}^d)$ and 
$x_1\in \mathcal{H}^{s-1} (\mathbb{R}_{\theta}^d).$ Then there exists a positive constant $\delta > 0$ such that
\begin{eqnarray}\label{main-estimet-1}
\|x(t)\|_{\mathcal{H}^s(\mathbb{R}_{\theta}^d)}  \lesssim e^{-\delta{t}}\left(\|x_0\|_{\mathcal{H}^s(\mathbb{R}_{\theta}^d)}+\|x_1\|_{\mathcal{H}^{s-1}(\mathbb{R}_{\theta}^d)}\right),  
\end{eqnarray}
holds for all $t > 0.$ Moreover, if $x_0\in \mathcal{H}^{\alpha+s}(\mathbb{R}_{\theta}^d)$ and 
$x_1\in \mathcal{H}^{\alpha+s-1} (\mathbb{R}_{\theta}^d),$ $\alpha \in \mathbb{N}\cup\{0\},$ then we have
\begin{eqnarray}\label{main-estimet-2}
\|\partial^{\alpha}_{t}x(t)\|_{\mathcal{H}^s(\mathbb{R}_{\theta}^d)} \lesssim e^{-\delta{t}}\left(\|x_0\|_{\mathcal{H}^{\alpha+s}(\mathbb{R}_{\theta}^d)}+\|x_1\|_{\mathcal{H}^{\alpha+s-1}(\mathbb{R}_{\theta}^d)}\right), 
\end{eqnarray} 
for all $t > 0.$
\end{prop}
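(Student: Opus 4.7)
The plan is to pass to the classical side via Definition \ref{F-transform} together with the Plancherel identity \eqref{Plancherel}, so that both norms become weighted $L^2$-integrals in $\xi$, and then feed in the pointwise Fourier bound \eqref{main-estimate} that has already been obtained by the case analysis leading up to the proposition.

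For \eqref{main-estimet-1}, I would first note that by \eqref{Def_NL2} and \eqref{Plancherel},
$$
\|x(t)\|^2_{\mathcal{H}^s(\mathbb{R}_\theta^d)} = \int_{\mathbb{R}^d} (1+|\xi|^2)^{s}\,|\widehat{x}(t,\xi)|^2\, d\xi.
$$
Dividing \eqref{main-estimate} by $\sqrt{|D(\xi)|}$ (and treating the critical set $D(\xi)=0$ directly via the Case II formula, whose polynomial-in-$t$ factor is absorbed into $e^{-\delta t}$ after shrinking $\delta$ slightly) one obtains
$$
|\widehat{x}(t,\xi)| \leq C\,e^{-\delta t}\bigl(|\widehat{x}_0(\xi)| + |D(\xi)|^{-1/2}|\widehat{x}_1(\xi)|\bigr).
$$
Since $|D(\xi)| = |\xi|^2 + m - \tfrac{b^2}{4}$ behaves like $|\xi|^2$ as $|\xi|\to\infty$ and stays bounded away from $0$ outside a compact shell, one has $(1+|\xi|^2)^{s/2}|D(\xi)|^{-1/2} \lesssim (1+|\xi|^2)^{(s-1)/2}$ uniformly in $\xi$. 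Substituting into the integral and splitting the two contributions produces exactly $e^{-\delta t}(\|x_0\|_{\mathcal{H}^s(\mathbb{R}_\theta^d)} + \|x_1\|_{\mathcal{H}^{s-1}(\mathbb{R}_\theta^d)})$, which is the desired bound.

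For \eqref{main-estimet-2}, I would differentiate the explicit representation \eqref{F_presentation2} $\alpha$ times in $t$. By \eqref{Partial_id}, the Fourier transform commutes with $\partial_t^\alpha$, and each $t$-derivative of $\widehat{K}_0(t,\xi)$ or $\widehat{K}_1(t,\xi)$ multiplies by one of the characteristic roots $-\tfrac{b}{2} \pm i\sqrt{|\xi|^2 + m - b^2/4}$, whose modulus is $O((1+|\xi|^2)^{1/2})$. Iterating yields a pointwise bound of the form
$$
|\partial_t^\alpha \widehat{x}(t,\xi)| \lesssim e^{-\delta t}(1+|\xi|^2)^{\alpha/2}\bigl(|\widehat{x}_0(\xi)| + |D(\xi)|^{-1/2}|\widehat{x}_1(\xi)|\bigr),
$$
and the same Plancherel computation combined with $(1+|\xi|^2)^{\alpha/2}|D(\xi)|^{-1/2} \lesssim (1+|\xi|^2)^{(\alpha-1)/2}$ produces the claimed bound involving $\|x_0\|_{\mathcal{H}^{\alpha+s}}$ and $\|x_1\|_{\mathcal{H}^{\alpha+s-1}}$.

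The delicate point will be the uniform control of the singular factor $|D(\xi)|^{-1/2}$ near the (at most one) sphere $|\xi|^2 = \tfrac{b^2}{4} - m$ where $D$ vanishes: one must check that the polynomial-in-$t$ factor arising from Case II is absorbed by the exponential decay at the cost of a slightly smaller $\delta > 0$, and that the exact solution formula glues smoothly across the three cases so that the pointwise estimates combine into a single bound valid for all $\xi$. Once this is verified, both estimates reduce to the two elementary weighted-$L^2$ calculations indicated above.
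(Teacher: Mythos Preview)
Your proposal is correct and follows essentially the same route as the paper: both arguments pass to the Fourier side via \eqref{Plancherel}, exploit the pointwise bound \eqref{main-estimate} together with the asymptotic $|D(\xi)|\sim 1+|\xi|^2$, and for \eqref{main-estimet-2} differentiate the explicit representation \eqref{F_presentation2} to pick up the extra factor $(1+|\xi|^2)^{\alpha/2}$. Your explicit acknowledgement of the singular factor $|D(\xi)|^{-1/2}$ near the critical sphere and of the need to absorb the Case~II polynomial factor into a slightly smaller $\delta$ is in fact more careful than the paper's treatment, which simply invokes $|D(\xi)|\sim 1+|\xi|^2$ and the bound \eqref{D_estimate}.
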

\begin{proof} Using the elementary asymptotic expansions   of the form  
$$
|D(\xi)|=|(m+|\xi|^2)-\frac{b^2}{4}|\sim 1+|\xi|^2\quad \text{as} \quad\xi' \to \infty,
$$ 
we can rewrite the estimate \eqref{main-estimate} as follows
\begin{equation}\label{M-estimate}
(1+|\xi|^2)^\frac{1}{2}|\widehat{x}(t,\xi)|\lesssim e^{-\delta{t}}\left((1+|\xi|^2)^\frac{1}{2}|\widehat{x}_0(\xi)|+ |\widehat{x}_1(\xi)|\right),\quad \xi\in\mathbb{R}^d. 
\end{equation}
By the Plancherel  identity \eqref{Plancherel} and the estimate \eqref{M-estimate} we obtain   
\begin{eqnarray*}
\|x(t)\|^2_{\mathcal{H}^s(\mathbb{R}_{\theta}^d)}&=&\|(1-\Delta_{\theta})^{\frac{s}{2}}x\|^2_{L^2(\mathbb{R}_{\theta}^d)} \\
&\overset{\eqref{Plancherel}}{=}&\|(1+|\xi|^2)^\frac{s-1}{2} \left((1+|\xi|^2)^\frac{1}{2}|\widehat{x}(t,\xi)|\right)\|^2_{L^2(\mathbb{R}^d)}\\ 
 &\overset{\eqref{M-estimate}}{\lesssim}&e^{-2\delta{t}}\|(1+|\xi|^2)^\frac{s}{2}  \widehat{x}_0(\xi)+(1+|\xi|^2)^\frac{s-1}{2}  \widehat{x}_1(\xi)\|^2_{L^2(\mathbb{R}^d)}\\& \lesssim&e^{-2\delta{t}}\left(\|(1+|\xi|^2)^\frac{s}{2}  \widehat{x}_0(\xi)\|^2_{L^2(\mathbb{R}^d)}+\|(1+|\xi|^2)^\frac{s-1}{2}  \widehat{x}_1(\xi)\|^2_{L^2(\mathbb{R}^d)}\right)\\
 &=&e^{-2\delta{t}}\left(\| x_0\|^2_{\mathcal{H}^{s}(\mathbb{R}_{\theta}^d)}+\| x_1\|^2_{\mathcal{H}^{s-1}(\mathbb{R}_{\theta}^d)}\right)\quad \text{for all}\quad t>0,
\end{eqnarray*}
which shows the estimate \eqref{main-estimet-1}. {\color{red} 
Hence, it follows from \eqref{F_presentation} that 
\begin{eqnarray*}
\partial^{\alpha}_{t}\widehat{x}(t,\xi)&\overset{\eqref{F_presentation}}{=}& \left(-\frac{b}{2} + i\sqrt{|\xi|^2+m - \frac{b^2}{4}}\right)^\alpha C_0 e^{\left(-\frac{b}{2} + i\sqrt{|\xi|^2+m - \frac{b^2}{4}}\right)t} \\
&+&\left(-\frac{b}{2} - i\sqrt{|\xi|^2+m - \frac{b^2}{4}}\right)^\alpha C_1 e^{\left(-\frac{b}{2} - i\sqrt{|\xi|^2+m - \frac{b^2}{4}}\right)t},    
\end{eqnarray*}
Therefore,  
\begin{equation}\label{asymptotic_ex}
|\partial^{\alpha}_{t}\widehat{x}(t,\xi)| {\leq}\left( \sqrt{|\xi|^2+m} \right)^{\alpha}|\widehat{x}(t,\xi)| \lesssim (1+|\xi|^2)^\frac{\alpha}{2}|\widehat{x}(t,\xi)|,\quad \xi\in\mathbb{R}^d.
\end{equation}}
Finally, for $\alpha \in \mathbb{N}\cup\{0\},$ by the  Plancherel  identity \eqref{Plancherel} and applying formulas \eqref{M-estimate} and \eqref{asymptotic_ex}, we have
\begin{eqnarray*}
\|\partial^{\alpha}_{t}x(t)\|^2_{\mathcal{H}^s (\mathbb{R}_{\theta}^d)}&=&\|(1-\Delta_{\theta})^{\frac{s}{2}}\left(\partial^{\alpha}_{t}x(t)\right)\|^2_{L^2(\mathbb{R}_{\theta}^d)}\\
&\overset{\eqref{Plancherel}\eqref{Partial_id}}{=}&\|(1+|\xi|^2)^{\frac{s}{2}}\partial^{\alpha}_{t}\widehat{x}(t,\xi)\|^2_{L^2(\mathbb{R}^d)} \\ 
&\overset{\eqref{asymptotic_ex}}{\lesssim}&\|(1+|\xi|^2)^{\frac{s+\alpha-1}{2}} \left((1+|\xi|^2)^{\frac{1}{2}}\widehat{x}(t,\xi)\right)\|^2_{L^2(\mathbb{R}^d)} \\
 &\overset{\eqref{M-estimate}}{\leq}&e^{-2\delta{t}}\left(\|(1+|\xi|^2)^{\frac{s+\alpha}{2}} \widehat{x}_0(\xi)\|^2_{L^2(\mathbb{R}^d)}+\|(1+|\xi|^2)^{\frac{s+\alpha-1}{2}} \widehat{x}_1(\xi)\|^2_{L^2(\mathbb{R}^d)}\right)\\
 &=&e^{-2\delta{t}}\left(\| x_0\|^2_{\mathcal{H}^{\alpha+s} (\mathbb{R}_{\theta}^d)}+\| x_1\|^2_{\mathcal{H}^{\alpha+s-1} (\mathbb{R}_{\theta}^d)}\right),\quad t>0, 
\end{eqnarray*}
thereby completing the proof.
\end{proof}

\subsection{Semilinear damped wave equations on noncommutative Euclidean spaces} 
 Let  $F:\mathbb{R}\rightarrow\mathbb{R}$ be a function satisfying the condition
\begin{eqnarray}\label{F-nonlinearity}
\left\{ \begin{array}{lcl}
         F(0)=0,\\ 
         \|F(x)-F(y)\|_{L^2(\mathbb{R}^d_{\theta})}\leq C(\|x\|^{p-1}_{L^{2p}(\mathbb{R}^d_{\theta})}+\|y\|^{p-1}_{L^{2p}(\mathbb{R}^d_{\theta})})\|x-y\|_{L^{2p}(\mathbb{R}^d_{\theta})},
                \end{array}\right.    
\end{eqnarray}
{\color{red} for some integer $p > 1$}  and self-adjoint operators $x,y\in L^{2p}(\mathbb{R}^d_{\theta}).$

The absolute value function is operator Lipschitz on all $L^p(\mathbb{R}^d_{\theta})$ spaces for $1 < p < \infty,$ for all self-adjoint operators $x,y$ such that $x-y\in L^p(\mathbb{R}^d_{\theta})$ (see \cite[Theorem 2.2]{DDdPS} and see also \cite[Corollary 3]{KPSS} for the case of normal operators). These results justify the application of the absolute value function to operators \ $x$ and $y,$ allowing us to focus solely on the case of positive operators. Therefore, the class of functions satisfying condition \eqref{F-nonlinearity} is non-empty. For instance, the function  $F(t) = t^2$ satisfies this condition for $p = 2,$ even for normal operators. Indeed, it is clear that  $F(0) = 0,$ and by the noncommutative H\"{o}lder inequality (see \cite[Theorem 1]{S2016}), we obtain the following:

$$
\|x^2 -y^2 \|_{L^{2}(\mathbb{R}^d_{\theta})}=\|x(x-y)+(x-y)y\|_{L^{2}(\mathbb{R}^d_{\theta})} \leq (\|x\|_{L^{4}(\mathbb{R}^d_{\theta})}+\|y\|_{L^{4}(\mathbb{R}^d_{\theta})}) \|x-y\|_{L^{4}(\mathbb{R}^d_{\theta})},
$$
for any normal operators $x,y\in L^4(\mathbb{R}^d_{\theta}).$   Repeating this argument inductively we can also prove that functions such as $F(t)=|t|^{p-1}t$ satisfy condition \eqref{F-nonlinearity} for positive operators and for  integer $p>1.$ Now, we consider a semilinear wave equation for the operator $-\Delta_{\theta}$ on the  noncommutative Euclidean space with the nonlinearity satisfying condition \eqref{F-nonlinearity}.

Let us study
\begin{eqnarray}\label{Semilinear-equation}
\left\{ \begin{array}{lcl}
         \partial^2_tx(t)+(-\Delta_{\theta})x(t)+b\partial_tx(t)+mx(t)= F(x(t)),\quad t>0 ,\\ 
         x(0)=x_0\in L^2(\mathbb{R}_{\theta}^d),\\
         \partial_tx(t)=x_1\in L^2(\mathbb{R}_{\theta}^d).
                \end{array}\right.    
\end{eqnarray}
with the damping term determined by $b > 0$ and with the mass $m > 0.$ 
We now formulate our main result in this subsection.
\begin{thm}\label{Theorem-global-solution} Let $F$  satisfies the properties \eqref{F-nonlinearity}. 
Assume that Cauchy data $x_0 \in \mathcal{H}^1(\mathbb{R}_{\theta}^d)$ and 
$x_1\in L^2(\mathbb{R}_{\theta}^d)$ satisfy
 \begin{eqnarray}
\|x_0\|_{\mathcal{H}^1 (\mathbb{R}_{\theta}^d)}+\|x_1\|_{L^2(\mathbb{R}_{\theta}^d)} \leq \epsilon.  
\end{eqnarray}
Then, there exists a small constant $\epsilon_0 > 0$ such that the Cauchy problem \eqref{Semilinear-equation} has a unique global solution $x\in C(\mathbb{R}^+;\mathcal{H}^1(\mathbb{R}_{\theta}^d)) \cap C^1(\mathbb{R}^+;L^2(\mathbb{R}_{\theta}^d))$ for all $0 < \epsilon \leq \epsilon_0.$ Moreover, there exists a number $\delta_0 > 0$ such that
\begin{eqnarray}\label{global-solution-estimete}
\|\partial^{\alpha}_{t}(-\Delta_{\theta})^{\beta}x(t)\|_{L^2(\mathbb{R}_{\theta}^d)} \lesssim e^{-\delta_0t}, 
\end{eqnarray}
for $(\alpha, \beta) = (0,0)$, or $(\alpha, \beta) = \left(0,\tfrac{1}{2}\right)$, or $(\alpha, \beta) = (1,0)$.

\end{thm}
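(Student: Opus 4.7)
The plan is to convert the Cauchy problem to an integral equation via Duhamel's formula and solve it by a contraction argument in a function space encoding the desired exponential decay. Using the linear representation \eqref{Lin_soution}, every mild solution of \eqref{Semilinear-equation} satisfies
\begin{equation*}
x(t) = K_0(t)\ast x_0 + K_1(t)\ast x_1 + \int_0^t K_1(t-s)\ast F(x(s))\,ds =: \Phi(x)(t).
\end{equation*}
I would work on the Banach space
\begin{equation*}
X := C(\mathbb{R}^+;\mathcal{H}^1(\mathbb{R}_{\theta}^d))\cap C^1(\mathbb{R}^+;L^2(\mathbb{R}_{\theta}^d))
\end{equation*}
endowed with the weighted norm
\begin{equation*}
\|x\|_X := \sup_{t>0}e^{\delta_0 t}\left(\|x(t)\|_{\mathcal{H}^1(\mathbb{R}_{\theta}^d)}+\|\partial_t x(t)\|_{L^2(\mathbb{R}_{\theta}^d)}\right),
\end{equation*}
where $0<\delta_0<\delta/p$ and $\delta>0$ is the constant from Proposition~\ref{Proposition-1}, and restrict attention to a small closed ball $B_M := \{x\in X:\|x\|_X\le M\}$.

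The core nonlinear estimate comes from combining the hypothesis \eqref{F-nonlinearity} (taking $y=0$), namely $\|F(x)\|_{L^2}\lesssim \|x\|_{L^{2p}}^p$, with the Gagliardo--Nirenberg inequality \eqref{G–N-inequality-2}, which provides the embedding $\mathcal{H}^1(\mathbb{R}_{\theta}^d)\hookrightarrow L^{2p}(\mathbb{R}_{\theta}^d)$ precisely in the range $2\le 2p\le \tfrac{2d}{d-2}$. This yields $\|F(x(s))\|_{L^2}\lesssim \|x(s)\|_{\mathcal{H}^1}^p$. Interpreting $K_1(t-s)\ast F(x(s))$ as the linear evolution \eqref{Lin_equation} with zero position datum and velocity datum $F(x(s))$, Proposition~\ref{Proposition-1} (used with $s=1,\,\alpha=0$ and with $s=0,\,\alpha=1$) yields
\begin{equation*}
\|K_1(t-s)\ast F(x(s))\|_{\mathcal{H}^1}+\|\partial_t K_1(t-s)\ast F(x(s))\|_{L^2}\lesssim e^{-\delta(t-s)}\|F(x(s))\|_{L^2}.
\end{equation*}
Inserting the ansatz $\|x(s)\|_{\mathcal{H}^1}\le Me^{-\delta_0 s}$ and integrating,
\begin{equation*}
\int_0^t e^{-\delta(t-s)}M^p e^{-p\delta_0 s}\,ds\le \frac{M^p}{\delta-p\delta_0}e^{-\delta_0 t},
\end{equation*}
while Proposition~\ref{Proposition-1} applied to the linear part gives a contribution of size $C\epsilon\,e^{-\delta t}\le C\epsilon\,e^{-\delta_0 t}$. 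Thus $\|\Phi(x)\|_X\le C\epsilon+C'M^p$, so $\Phi(B_M)\subset B_M$ once $M$ is small and $\epsilon\le\epsilon_0$ is chosen accordingly.

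The contraction property follows from the full Lipschitz bound in \eqref{F-nonlinearity}: the same Gagliardo--Nirenberg/Duhamel chain produces $\|\Phi(x)-\Phi(y)\|_X\lesssim M^{p-1}\|x-y\|_X$, which is a strict contraction for $M$ small. Banach's fixed point theorem then supplies a unique fixed point $x\in B_M$, which is the unique global mild solution; the defining norm on $X$ immediately gives \eqref{global-solution-estimete} for $(\alpha,\beta)\in\{(0,0),(0,\tfrac12),(1,0)\}$ after using $\|(-\Delta_\theta)^{1/2}x\|_{L^2}+\|x\|_{L^2}\lesssim \|x\|_{\mathcal{H}^1}$. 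I expect the main technical obstacle to lie in the nonlinear estimate: one must verify that $F(x(s))$ genuinely belongs to $L^2(\mathbb{R}_{\theta}^d)$ (so that the $K_1$-convolution can indeed be treated by Proposition~\ref{Proposition-1}), and that the admissible exponents $p>1$ with $2p\le \tfrac{2d}{d-2}$ cover the intended range of nonlinearities; the remaining steps are standard bookkeeping of exponential weights and continuity in time.
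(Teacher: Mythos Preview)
Your proposal is correct and follows essentially the same strategy as the paper: Duhamel formulation, Gagliardo--Nirenberg to control $\|F(x)\|_{L^2}$ by $\|x\|_{\mathcal{H}^1}^p$, the linear decay from Proposition~\ref{Proposition-1} applied to $K_1(t-s)\ast F(x(s))$, and a contraction on a small ball. The only notable difference is technical: the paper works with the weight $(1+t)^{-1/2}e^{\delta t}$ and estimates the Duhamel integral via a Fourier-side Cauchy--Schwarz argument (picking up the polynomial loss), whereas your choice of a slightly smaller exponential weight $e^{\delta_0 t}$ with $\delta_0<\delta/p$ lets you bound $\int_0^t e^{-\delta(t-s)}e^{-p\delta_0 s}\,ds$ directly and avoid that factor---either bookkeeping closes the argument.
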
 
\begin{proof} Let us consider a closed subspace $\Omega_{0}$ of the space $C^1(\mathbb{R}^+;\mathcal{H}^1 (\mathbb{R}_{\theta}^d))$ defined by
$$
\Omega_{0}:=\left\{x\in C^1(\mathbb{R}^+;\mathcal{H}^1(\mathbb{R}_{\theta}^d)):\quad \|x\|_{\Omega_{0}}\leq M\right\} 
$$
with
$$
\|x\|_{\Omega_{0}}:=\sup\limits_{t\geq0}\left\{(1+t)^{-\frac{1}{2}}e^{\delta t}\left(\|x(t)\|_{L^2(\mathbb{R}_{\theta}^d)}+\|\partial_{t}x(t)\|_{L^2(\mathbb{R}_{\theta}^d)}+\|(-\Delta_{\theta})^\frac{1}{2}x(t)\|_{L^2(\mathbb{R}_{\theta}^d)}\right)\right\},
$$
where $M> 0$ will be defined later.  We define the mapping $\mathcal{A}$ on $\Omega_0$ by the formula
 \begin{equation}\label{main-mapping-1}
 (\mathcal{A}x)(t):=x_{lin}(t)+\int\limits_{0}^tK_{1}(t-s)\ast F(x(s))ds, 
\end{equation}
where {\color{red} the operator $x_{lin}(t)$ is a solution of the linear equation \eqref{Lin_equation}} and the function $K_1$ are defined in \eqref{Lin_soution}. Moreover, $\left(\mathcal{A}x\right)(t)$  is the solution of the following linear problem:
\begin{eqnarray*}
\left\{ \begin{array}{lcl}
         \partial^2_ty(t)+(-\Delta_{\theta})y(t)+b\partial_ty(t)+my(t)=0,\quad t>0, \\ 
        y(0)=0,\\
         \partial_ty(0)=F. 
\end{array}\right.
\end{eqnarray*}
We assume for a moment that 
\begin{eqnarray}\label{main-mapping-2}
\|\mathcal{A}(x)\|_{\Omega_0}\leq M,\quad \text{for all}\quad x\in \Omega_0
\end{eqnarray}
and 
\begin{eqnarray}\label{main-mapping-3}
\|\mathcal{A}(x)-\mathcal{A}(y)\|_{\Omega_0}\leq\frac{1}{r}\|x-y\|_{\Omega_0}\quad \text{for all}\quad x, y \in \Omega_0. 
\end{eqnarray}
Hence, it follows from \eqref{main-mapping-2} and \eqref{main-mapping-3} that $\mathcal{A}$ is a contraction mapping on $\Omega_0.$ The Banach fixed point theorem then implies that  $\mathcal{A}$ has a unique fixed point in $\Omega_0.$ It means that there exists a unique global solution $x$ of the equation
$$
x(t)=\mathcal{A}\left(x(t)\right)\quad \text{in} \quad  \Omega_0, 
$$
which also gives the solution to \eqref{Semilinear-equation}. Therefore, we have to prove inequalities \eqref{main-mapping-2} and \eqref{main-mapping-3}. By the Gagliardo–Nirenberg inequality \eqref{G–N-inequality-2}, and applying the following Young inequality 
$$
a^{\eta}b^{1-\eta}\leq {\eta}a+(1-\eta)b, \quad a,b\geq 0, \, 0\leq \eta \leq 1,
$$   
we obtain 
\begin{eqnarray}\label{inequality_1}  
\left(\|x\|^{p-1}_{L^{2p}(\mathbb{R}^d_{\theta})}+\|y\|^{p-1}_{L^{2p}(\mathbb{R}^d_{\theta})}\right)\|x-y\|_{L^{2p}(\mathbb{R}^d_{\theta})} 
&\overset{\eqref{G–N-inequality-2}}{\leq}& \left[\left(\|(-\Delta_{\theta})^\frac{1}{2}x\|^{\eta}_{L^{2}(\mathbb{R}^d_\theta)}\|x\|^{1-\eta}_{L^2(\mathbb{R}^d_\theta)}\right)^{p-1}\right.\nonumber\\
&+&\left.\left(\|(-\Delta_{\theta})^\frac{1}{2}y\|^{\eta}_{L^{2}(\mathbb{R}^d_\theta)}\|y\|^{1-\eta}_{L^2(\mathbb{R}^d_\theta)}\right)^{p-1} \right]\nonumber\\
&\times&\|(-\Delta_{\theta})^\frac{1}{2}(x-y)\|^{\eta}_{L^{2}(\mathbb{R}^d_\theta)}\|x-y\|^{1-\eta}_{L^2(\mathbb{R}^d_\theta)} \\
&\leq&   \left[\left(\|(-\Delta_{\theta})^\frac{1}{2}x\|_{L^{2}(\mathbb{R}_{\theta}^d)}+\|x\|_{L^{2}(\mathbb{R}_{\theta}^d)}\right)^{p-1}\right.\nonumber\\
&+&\left.\left(\|(-\Delta_{\theta})^\frac{1}{2}y\|_{L^{2}(\mathbb{R}_{\theta}^d)}+\|y\|_{L^{2}(\mathbb{R}_{\theta}^d)}\right)^{p-1}\right]\nonumber\\
&\times&\left(\|(-\Delta_{\theta})^\frac{1}{2}(x-y)\|_{L^{2}(\mathbb{R}_{\theta}^d)}+\|x-y\|_{L^{2}(\mathbb{R}_{\theta}^d)}\right). \nonumber  
\end{eqnarray}
Since $\|x\|_{\Omega_0} \leq  M$ and $\|y\|_{\Omega_0}\leq  M,$ it follows from the condition \eqref{F-nonlinearity} and \eqref{inequality_1} that
\begin{eqnarray}\label{application-G-N-1} 
\|F(x(t))-F(y(t))\|_{L^2(\mathbb{R}_{\theta}^d)}&\overset{\eqref{F-nonlinearity}}{\leq}& C\left(\|x(t)\|^{p-1}_{L^{2p}(\mathbb{R}^d_{\theta})}+\|y(t)\|^{p-1}_{L^{2p}(\mathbb{R}^d_{\theta})}\right)\|x(t)-y(t)\|_{L^{2p}(\mathbb{R}^d_{\theta})}\nonumber\\
&\overset{\eqref{inequality_1}}{\leq}&  C \left[\left(\|(-\Delta_{\theta})^\frac{1}{2}x(t)\|_{L^{2}(\mathbb{R}_{\theta}^d)}+\|x(t)\|_{L^{2}(\mathbb{R}_{\theta}^d)}\right)^{p-1}\right.\nonumber\\
&+&\left.\left(\|((-\Delta_{\theta})^\frac{1}{2})y(t)\|_{L^{2}(\mathbb{R}_{\theta}^d)}+\|y(t)\|_{L^{2}(\mathbb{R}_{\theta}^d)}\right)^{p-1}\right]\\
&\times&\left(\|(-\Delta_{\theta})^\frac{1}{2}(x(t)-y(t))\|_{L^{2}(\mathbb{R}_{\theta}^d)}+\|x(t)-y(t)\|_{L^{2}(\mathbb{R}_{\theta}^d)}\right)\nonumber\\
&\leq&C(1+t)^\frac{p}{2} e^{-\delta{t}p} \left(\|x\|^{p-1}_{\Omega_0}+\|y\|^{p-1}_{\Omega_0}\right)\|x-y\|_{\Omega_0}\nonumber\\
&\leq&C(1+t)^\frac{p}{2} e^{-\delta{t}p} 2M^{p-1} \|x-y\|_{\Omega_0}.\nonumber  
\end{eqnarray}
By putting $y(t)=0$ in \eqref{application-G-N-1}, and using that $F(0)=0,$ we also have
\begin{eqnarray}\label{application-G-N-2}
\|F(x(t))\|_{L^2(\mathbb{R}_{\theta}^d)}&\leq& 2 C(1+t)^\frac{p}{2}e^{-\delta{t}p}M^{p}.  \end{eqnarray}
Let us now proceed to estimate the integral operator as follows 
$$
(Ix)(t):=\int\limits_0^tK_{1}(t-s)\ast F(x(s))ds. 
$$ 
Let  $0\leq \beta \leq \frac{1}{2}.$ Then, from Lemma \ref{laplacian-convolution}   we derive 
$$
 ((-\Delta_{\theta})^{ \beta}Ix)(t)=\int\limits_0^tK_{1}(t-s)\ast \left((-\Delta_{\theta})^{ \beta}F(x(s))\right)ds=\int\limits_0^t\int\limits_{\mathbb{R}^d}\widehat{K}_{1}(t-s,\xi)|\xi|^{\frac{\beta}{2}}\widehat{F}(s,\xi)U_{\theta}(\xi)d\xi ds, 
$$
where $\widehat{F}(s,\xi):=\tau_{\theta}\left(F(x(s)) U_{\theta}(\xi)^*\right)$ with $\xi\in\mathbb{R}^d.$  By the Cauchy-Schwarz inequality we have
\begin{eqnarray}\label{F-integral-operator}
| \partial^{\alpha}_{t}\widehat{((-\Delta_{\theta})^{ \beta}Ix)}(t,\xi)|^2&=&\left|\partial^{\alpha}_{t}\int\limits_0^t\widehat{K}_{1}(t-s,\xi) |\xi|^{\beta}\widehat{F}(s,\xi)ds\right|^2\nonumber\\
 &\lesssim&|\widehat{K}_{1}(0,\xi)| |\xi|^{2\beta}\widehat{F}(t,\xi)|^2+\left|\int\limits_0^t\partial^{\alpha}_{t}\left(|\xi|^{\beta}\widehat{K}_{1}(t-s,\xi) \widehat{F}(s,\xi)\right)ds\right|^2\nonumber\\
 &\leq&  \frac{|\xi|^{2\beta}|\widehat{F}(t,\xi)|^2}{\sqrt{|\xi|^2+m - \frac{b^2}{4}}}+\left(\int\limits_0^t\left|\partial^{\alpha}_{t}\left(|\xi|^{\beta}\widehat{K}_{1}(t-s,\xi) \widehat{F}(s,\xi)\right)\right|ds\right)^2\\
 &\leq& (1+|\xi|^2)^{\beta-\frac{1}{2}}|\widehat{F}(t,\xi)|^2+t\int\limits_0^t\left|\partial^{\alpha}_{t}\left(|\xi|^{\beta}\widehat{K}_{1}(t-s,\xi) \widehat{F}(s,\xi)\right)\right|^2ds,\nonumber\nonumber
\end{eqnarray} 
for $\alpha= 0, 1.$

Next, it follows from  Proposition \ref{Proposition-1} and  the Plancherel (Parseval) identity \eqref{Plancherel}, \eqref{Partial_id} and \eqref{F-integral-operator} that   
\begin{eqnarray} \label{application-G-N-3} 
\|\partial^{\alpha}_{t}((-\Delta_{\theta})^{\beta}Ix)(t)\|^2_{L^2(\mathbb{R}_{\theta}^d)}&\overset{\eqref{Plancherel}\eqref{Partial_id}}{=}&  \int\limits_{\mathbb{R}^d}\left|\partial^{\alpha}_{t}\widehat{((-\Delta_{\theta})^{ \beta}Ix)}(t,\xi)\right|^2d\xi\nonumber\\
 &\overset{\eqref{F-integral-operator}}{\leq}& \int\limits_{\mathbb{R}^d} |\widehat{F}(t,\xi)|^2d\xi\nonumber\\
 &+&t\int\limits_0^t\int\limits_{\mathbb{R}^d}\left|\partial^{\alpha}_{t}\left((1+|\xi|^2)^{\frac{\beta}{2}}\widehat{K}_{1}(t-s,\xi) \widehat{F}(s,\xi)\right)\right|^2d\xi ds\\
 &\overset{\eqref{Plancherel}}{=}&\|F(x(t))\|^2_{L^2(\mathbb{R}_{\theta}^d)}+t\int\limits_0^t\left\|\partial^{\alpha}_{t} (K_{1}(t-s)\ast F(x(s)))\right\|^2_{\mathcal{H}^{\beta}(\mathbb{R}_{\theta}^d)} ds\nonumber\\
 &\overset{\eqref{main-estimet-2}}{\lesssim}&\|F(x(t))\|^2_{L^2(\mathbb{R}_{\theta}^d)}\nonumber +Ct\int\limits_0^t\left\|F(x(s))\right\|^2_{L^2(\mathbb{R}_{\theta}^d)}ds,\nonumber
\end{eqnarray}
Hence, according to  \eqref{application-G-N-1}, \eqref{application-G-N-2} and  \eqref{application-G-N-3} we find
\begin{eqnarray}\label{F-estimeite-1} 
\|\partial^{\alpha}_{t}((-\Delta_{\theta})^{\beta}(Ix-Iy))(t)\|_{L^2(\mathbb{R}_{\theta}^d)}&\overset{\eqref{application-G-N-3}}{\leq}& \left(\|F(x(t))\|^2_{L^2(\mathbb{R}_{\theta}^d)}\nonumber +Ct\int\limits_0^t\left\|F(x(s))\right\|^2_{L^2(\mathbb{R}_{\theta}^d)}ds\right)^\frac{1}{2}\nonumber\\
&\overset{\eqref{application-G-N-1}}{\leq}&(1+t)^\frac{p+1}{2} e^{-\delta{t}p} 2M^{p-1}  \|x-y\|_{\Omega_0}      
\end{eqnarray}
and
\begin{eqnarray}\label{F-estimeite-2} 
\|\partial^{\alpha}_{t}((-\Delta_{\theta})^{\beta}Ix)(t)\|_{L^2(\mathbb{R}_{\theta}^d)}&\overset{\eqref{application-G-N-2}}{\leq}& (1+t)^\frac{p+1}{2} e^{-\delta{t}p} 2M^{p}, 
\end{eqnarray}
which hold for $(\alpha,\beta)=(0,0),$ $(\alpha,\beta)=(0,\frac{1}{2}),$ and $(\alpha,\beta)=(1,0).$

Consequently, by the definition of $\mathcal{A}$ in \eqref{main-mapping-1} and using Proposition \ref{Proposition-1} for the first term and estimates for $I$ for the second term below, we obtain
\begin{eqnarray}\label{Ga-estimeite-1} 
\|\mathcal{A}\left(x\right)\|_{\Omega_0} \leq\|x_{lin}\|_{\Omega_0}+\|Ix\|_{\Omega_0} \leq C_1 
 \left(\|x_0\|^2_{\mathcal{H}^1 (\mathbb{R}_{\theta}^d)}+\|x_0\|^2_{L^1(\mathbb{R}_{\theta}^d)}\right)+C_2 M^{p}, 
\end{eqnarray}
for some $C_1 > 0$ and $C_2 > 0.$ Moreover, in the similar way, we can estimate
\begin{eqnarray}\label{Ga-estimeite-2} 
\|\mathcal{A}\left(x\right)-\mathcal{A}\left(y\right)\|_{\Omega_0}\leq \|Ix-Iy\|_{\Omega_0}\leq  C_3 M^{p-1}\|x-y\|_{\Omega_0}, 
\end{eqnarray}
for some $C_3 > 0.$ Take $r > 1$ and choose $M:= rC_1 
 \left(\|x_0\|^2_{\mathcal{H}^1 (\mathbb{R}_{\theta}^d)}+\|x_0\|^2_{L^1(\mathbb{R}_{\theta}^d)}\right)$ with sufficiently small  $\|x_0\|^2_{\mathcal{H}^1 (\mathbb{R}_{\theta}^d)}+\|x_0\|^2_{L^1(\mathbb{R}_{\theta}^d)}< \epsilon $ so that
\begin{eqnarray}\label{Constant-inequalities}
C_2M^p \leq \frac{1}{r}M,  \quad C_3M^{p-1} \leq \frac{1}{r}.
\end{eqnarray}
Hence, by using formulas \eqref{Ga-estimeite-1}–\eqref{Constant-inequalities} we obtain the estimates \eqref{main-mapping-2} and \eqref{main-mapping-3}. This allows us to apply the fixed point theorem and to establish the existence of solutions. Moreover, the estimate \eqref{global-solution-estimete} follows from \eqref{application-G-N-3}, thereby completing the proof.
 \end{proof}
{\color{red}\begin{rem}
In equation \eqref{Semilinear-equation}, the operator $-\Delta_{\theta}$ can be replaced by $(-\Delta_{\theta})^{s/2},$ but with an additional restriction for
$s.$ In this setting, the same approach remains applicable to obtain similar results to those in this section, with the corresponding version of the Sobolev space $\mathcal{H}^s(\mathbb{R}_{\theta}^d)$ used in accordance with the power of the Laplacian. 
\end{rem}}
\subsection{Application of the Nash inequality to the heat equation}

In this subsection we present a direct application of the Nash inequality to compute the decay rate for the heat equation with $-\Delta_{\theta}$ defined by \eqref{laplacian}. When $\theta=0,$ this result coincides with the application of the classical Nash's inequality which was studied by Nash himself in \cite{Nash} (see also \cite{BDS}).
\begin{thm}Let $u_{0}\in L^{1}(\mathbb{R}^d_{\theta})\cap L^{2}(\mathbb{R}^d_{\theta})$ be a positive operator and let $u(t)$ satisfies the following heat equation
\begin{equation}\label{heat-equation}
   \partial_t u(t)=\Delta_{\theta}u(t) ,\quad u(0)=u_0,\quad t>0. 
\end{equation}
Then the solution of equation \eqref{heat-equation} has the following estimate
$$
\|u(t)\|_{L^{2}(\mathbb{R}^d_{\theta})}\leq \left(\|u_0\|^{-\frac{4}{d}}_{L^{2}(\mathbb{R}^d_{\theta})}+\frac{4}{dC_{d,2}}\|u_0\|^{-\frac{4}{d}}_{L^{1}(\mathbb{R}^d_{\theta})}t\right)^{-\frac{d}{4}} 
$$
for all $t>0,$ where $C_{d,2}>0$ is the constant in \eqref{Nash-inequality}.
\end{thm}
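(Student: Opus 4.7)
The plan is to follow Nash's classical differential-inequality argument adapted to the noncommutative setting, exploiting the Nash inequality \eqref{Nash-ineq} from Theorem \ref{Main-thm} together with conservation properties of the heat flow.

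First, I would set $y(t) := \|u(t)\|_{L^2(\mathbb{R}^d_\theta)}^2 = \tau_\theta(u(t)^2)$ and differentiate in $t$. Using the self-adjointness of $\Delta_\theta$ on $L^2(\mathbb{R}^d_\theta)$ together with the heat equation \eqref{heat-equation}, one has
\begin{equation*}
y'(t) = 2\tau_\theta\bigl(u(t)\,\partial_t u(t)\bigr) = 2\tau_\theta\bigl(u(t)\,\Delta_\theta u(t)\bigr) = -2\|\nabla_\theta u(t)\|_{L^2(\mathbb{R}^d_\theta)}^2,
\end{equation*}
where the integration-by-parts step is justified by passing to the Fourier side via the Plancherel identity \eqref{Plancherel} and using the symbol $|\xi|^2$ of $-\Delta_\theta$.

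Second, I need to control the $L^1$ norm along the flow. Since $u_0 \geq 0$, the heat semigroup $e^{t\Delta_\theta}$ preserves positivity (this follows from its representation as a convolution with the positive Gauss kernel, together with Remark \ref{positivity-convolution}), so $u(t)\ge 0$ and hence $\|u(t)\|_{L^1(\mathbb{R}^d_\theta)} = \tau_\theta(u(t))$. Because $\tau_\theta\circ e^{t\Delta_\theta} = \tau_\theta$ on positive elements (the heat kernel has total mass one), we get $\|u(t)\|_{L^1(\mathbb{R}^d_\theta)} = \|u_0\|_{L^1(\mathbb{R}^d_\theta)}$ for all $t>0$.

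Third, inserting the Nash inequality \eqref{Nash-ineq} (raised to the power $2$) into the identity for $y'(t)$ gives
\begin{equation*}
y'(t) = -2\|\nabla_\theta u(t)\|_{L^2(\mathbb{R}^d_\theta)}^2 \leq -\frac{2}{C_{d,2}}\,\|u(t)\|_{L^2(\mathbb{R}^d_\theta)}^{2+\frac{4}{d}}\,\|u_0\|_{L^1(\mathbb{R}^d_\theta)}^{-\frac{4}{d}} = -k\, y(t)^{1+\frac{2}{d}},
\end{equation*}
where $k := \frac{2}{C_{d,2}}\|u_0\|_{L^1(\mathbb{R}^d_\theta)}^{-4/d}$. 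This is an autonomous scalar ODE inequality. Separating variables, $\frac{d}{dt}\bigl(y^{-2/d}\bigr) = -\tfrac{2}{d}\,y^{-2/d-1}y' \geq \tfrac{2k}{d}$, so integrating from $0$ to $t$ yields $y(t)^{-2/d} \geq y(0)^{-2/d} + \tfrac{2k}{d}t$, i.e.
\begin{equation*}
\|u(t)\|_{L^2(\mathbb{R}^d_\theta)}^2 \leq \Bigl(\|u_0\|_{L^2(\mathbb{R}^d_\theta)}^{-\frac{4}{d}} + \frac{4}{dC_{d,2}}\|u_0\|_{L^1(\mathbb{R}^d_\theta)}^{-\frac{4}{d}}\,t\Bigr)^{-\frac{d}{2}},
\end{equation*}
and taking square roots gives the claimed estimate.

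The main technical points to watch are: (i) rigorously justifying the differentiation of $\tau_\theta(u(t)^2)$ and the integration by parts $\tau_\theta(u\Delta_\theta u) = -\|\nabla_\theta u\|_{L^2}^2$, which I would handle by working on the Fourier side using \eqref{Plancherel} and \eqref{Partial_id}; and (ii) the conservation $\|u(t)\|_{L^1(\mathbb{R}^d_\theta)} = \|u_0\|_{L^1(\mathbb{R}^d_\theta)}$, which hinges on positivity preservation of $e^{t\Delta_\theta}$ and the normalization $\tau_\theta(e^{t\Delta_\theta}x)=\tau_\theta(x)$ for $x\in L^1(\mathbb{R}^d_\theta)_+$. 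The ODE step itself is routine.
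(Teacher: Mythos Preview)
Your proposal is correct and follows essentially the same approach as the paper: the paper also writes $y(t)=\|u(t)\|_{L^2(\mathbb{R}^d_\theta)}^2$, computes $y'(t)=-2\|\nabla_\theta u(t)\|_{L^2(\mathbb{R}^d_\theta)}^2$ via self-adjointness, establishes $\|u(t)\|_{L^1}=\|u_0\|_{L^1}$ from positivity preservation of the heat convolution (it computes $\tau_\theta(u(t))=\widehat{u}_0(0)$ directly), applies the Nash inequality to obtain the same differential inequality, and integrates. The only cosmetic difference is that the paper carries out the $L^1$-conservation by evaluating the Fourier transform at $0$, while you phrase it as mass preservation of the heat kernel; both are the same computation.
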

\begin{proof} The solution of equation \eqref{heat-equation} is written as 
\begin{equation}\label{heat-solution}
u(t)=e^{-t|\cdot|^2}* u_0=\int\limits_{\mathbb{R}^d}e^{-t|\xi|^2}\widehat{u}_0(\xi)U_{\theta}(\xi)d\xi. 
\end{equation}
Since $e^{-t|\cdot|^2}>0$ and $u_0$ is a positive operator, it follows from Remark \ref{positivity-convolution} that $u(t)$ is a positive operator for any $t>0.$ Hence, by \eqref{def-convolution} and \eqref{heat-solution} we have
 $$
\|u(t)\|_{L^1(\mathbb{R}^d_{\theta})}=\tau_{\theta}(u(t))\overset{\eqref{heat-solution}}{=}\int\limits_{\mathbb{R}^d}e^{-t|\xi|^2}\widehat{u}_0(\xi)\tau_{\theta}(U_{\theta}(\xi))d\xi=\widehat{u}_0(0)=\|u_0\|_{L^1(\mathbb{R}^d_{\theta})},   
$$
which shows that 
\begin{equation}\label{solution-ineq}
\|u(t)\|_{L^1(\mathbb{R}^d_{\theta})}= \|u_0\|_{L^1(\mathbb{R}^d_{\theta})}.
\end{equation} 
 Therefore, since $u(t)$ is self-adjoint, we have 
\begin{eqnarray}\label{Soblov-estimete}
\frac{d}{dt}\|u(t)\|^2_{L^{2}(\mathbb{R}^d_{\theta})}&=&\tau_{\theta}(\partial_t u\cdot u^*)+\tau_{\theta}(u\cdot \partial_t u^*)\nonumber\\
&=&2\tau_{\theta}(\partial_t u\cdot u^*)\nonumber\\
&=&-2\tau_{\theta}(-\Delta_{\theta}u\cdot u^*)\nonumber\\
&=&-2 \|\nabla_{\theta}u\|^2_{L^{2}(\mathbb{R}^d_{\theta})}. 
\end{eqnarray}
Set $y(t):= \|u(t)\|^2_{L^{2}(\mathbb{R}^d_{\theta})}.$ Then applying the Nash inequality \eqref{Nash-inequality} with $s=1$, and by \eqref{Soblov-estimete} and \eqref{solution-ineq} we obtain 
\begin{eqnarray*}
y'\overset{\eqref{Nash-inequality}, \eqref{Soblov-estimete}}{\leq}  -2 C^{-\frac{d}{4}}_{d,2}\|u\|^{-\frac{4}{d}}_{L^{1}(\mathbb{R}^d_{\theta})}y^{1+\frac{2}{d}}\overset{\eqref{solution-ineq}}{\leq}  -2  C^{-\frac{d}{4}}_{d,2}\|u_0\|^{-\frac{4}{d}}_{L^{1}(\mathbb{R}^d_{\theta})}y^{1+\frac{2}{d}}.
\end{eqnarray*}
Integrating with respect to $t>0,$ we obtain the following estimate 
\begin{eqnarray*}
\|u(t)\|_{L^{2}(\mathbb{R}^d_{\theta})}\leq \left(\|u_0\|^{-\frac{4}{d}}_{L^{2}(\mathbb{R}^d_{\theta})}+\frac{4}{dC_{d,2}}\|u_0\|^{-\frac{4}{d}}_{L^{1}(\mathbb{R}^d_{\theta})}t\right)^{-\frac{d}{4}},
\end{eqnarray*}
which completes the proof. 
\end{proof}

\subsection{Conflict of interest}
We can conceive of no conflict of
interest in the publication of this paper. The work has not
been published previously and it has not been submitted for publication elsewhere.

\subsection{Data availability}
No new data were created or analysed during this study. Data sharing is not applicable to this article

\section{Acknowledgements}
Authors would like to thank to  Dr. Edward McDonald for helpful discussions on noncommutative Euclidean spaces. Indeed, he offered the new definition of the convolution in Definition \ref{new_conv_1}. The authors also like to thank Deyu Chen, who mentioned a gap in the proof of Theorem \ref{Nash-log-Sobolev} in the previous version. The work was partially supported by the grant from the Ministry of Science and Higher Education of the Republic of Kazakhstan (Grant No. AP23487088). 
The authors were partially supported by Odysseus and Methusalem grants (01M01021 (BOF Methusalem) and 3G0H9418 (FWO Odysseus)) from Ghent Analysis and PDE center at Ghent University. The first author was also supported by the EPSRC grant EP/V005529/1. 
Authors thank the anonymous referee for reading the paper and providing thoughtful comments, which improved the exposition of the paper.

\begin{center}

\end{center}

\end{document}